\newtheorem{mtheorem}{Theorem}
\newtheorem{theorem}{Theorem}[section]
\newtheorem{proposition}[theorem]{Proposition}
\newtheorem{lemma}[theorem]{Lemma}
\newtheorem{corollary}[theorem]{Corollary}
\theoremstyle{definition}
\newtheorem{definition}[theorem]{Definition}
\theoremstyle{remark}
\newtheorem{remark}[theorem]{Remark}
\DeclareMathOperator{\im}{Im}
\DeclareMathOperator{\Hom}{Hom}
\DeclareMathOperator{\End}{End}
\DeclareMathOperator{\Cone}{Cone}
\DeclareMathOperator{\colim}{colim}
\DeclareMathOperator{\nil}{nil}
\DeclareMathOperator{\Z}{Z}
\DeclareMathOperator{\Quot}{Quot}
\DeclareMathOperator{\Spec}{Spec}
\DeclareMathOperator{\HH}{H}
\DeclareMathOperator{\M}{M}
\newcommand{\kk}{{\mathsf{k}}}
\newcommand{\Yo}{\operatorname{Yo}}
\newcommand{\pseudocoh}{{\operatorname{ps-coh}}}
\newcommand{\hProj}{{\operatorname{hProj}}}
\newcommand{\hInj}{{\operatorname{hInj}}}
\newcommand{\per}{{\operatorname{per}}}
\newcommand{\res}{{\operatorname{res}}}
\newcommand{\mfp}{{\mathfrak{p}}}
\renewcommand{\epsilon}{{\varepsilon}}
\renewcommand{\phi}{{\varphi}}
\newcommand{\define}[1]{{\textbf{#1}}}
\newcommand{\id}{{\operatorname{id}}}
\newcommand{\opp}{{\operatorname{op}}}
\newcommand{\bZ}{{\mathbb{Z}}}
\newcommand{\bN}{{\mathbb{N}}}
\newcommand{\red}{{\operatorname{red}}}
\newcommand{\reg}{{\operatorname{reg}}}
\newcommand{\rad}{{\operatorname{rad}}}
\newcommand{\qc}{{\operatorname{qc}}}
\DeclareMathOperator{\coh}{coh}
\DeclareMathOperator{\Qcoh}{Qcoh}
\DeclareMathOperator{\Mod}{Mod}
\let\mod\relax
\DeclareMathOperator{\mod}{mod}
\DeclareMathOperator{\pdim}{pdim}
\newcommand{\pf}{{\operatorname{pf}}}
\newcommand{\bd}{{\operatorname{b}}}
\newcommand{\C}{\operatorname{{C}}}
\newcommand{\D}{{\operatorname{D}}}
\newcommand{\K}{{\operatorname{K}}}
\newcommand{\Rd}{\mathsf{R}}
\newcommand{\ol}[1]{{\overline{#1}}}
\newcommand{\ra}{\rightarrow}
\newcommand{\xra}[2][]{\xrightarrow[{#1}]{#2}}
\newcommand{\xla}[2][]{\xleftarrow[{#1}]{#2}}
\newcommand{\sira}{\xra{\sim}}
\newcommand{\sila}{\xla{\sim}}
\newcommand{\sra}{\twoheadrightarrow}
\newcommand{\hra}{\hookrightarrow}
\newcommand{\thick}{{\operatorname{thick}}}
\newcommand{\ul}[1]{{\underline{#1}}}
\newcommand{\sptag}[1]{\href{http://stacks.math.columbia.edu/tag/#1}{#1}}
\numberwithin{equation}{section}
\title{Smoothness of Derived Categories of Algebras}
\author{Alexey Elagin, Valery A.~Lunts, and Olaf M.~Schn{\"u}rer}
\address{A.E.: Institute for Information Transmission Problems (Kharkevich
  Institute), Russian Federation;
  HSE University, Russian Federation
}
\email{alexelagin@rambler.ru}
\address{V.L.:
  Department of Mathematics\\
  Indiana University\\
  831 East 3rd Street\\
  Bloomington, IN 47405\\
  USA;
  HSE University, Russian Federation
}
\email{vlunts@indiana.edu}
\address{O.S.:
  Institut f\"ur Mathematik\\
  Universit{\"a}t Paderborn\\
  Warburger Stra\ss{}e 100\\
  33098 Paderborn\\
  Germany
}
\email{olaf.schnuerer@math.uni-paderborn.de}
\begin{document}
\subjclass[2010]{Primary
  16E45; 
  Secondary
  16E35, 
  14F05, 
  16H05 
}
\keywords{Differential graded category, Derived category,
  Smoothness, Generator}
\maketitle

\begin{abstract}
  We prove smoothness in the dg sense of the bounded derived
  category
  of finitely generated modules over any finite-dimensional
  algebra
  over a perfect field, thereby answering a question of Iyama.
  More generally, we prove this
  statement for any algebra over a perfect field that is
  finite over its center and whose center is finitely generated
  as an algebra. These results are deduced from a general sufficient
  criterion for smoothness.
\end{abstract}

\tableofcontents

\section{Introduction}
\label{sec:introduction}

Many triangulated categories have dg enhancements (differential
graded enhancements). If we consider
triangulated categories of algebraic or geometric origin, e.\,g.\
derived categories of modules over an algebra or of sheaves on some
space,
it is natural to ask what properties their dg enhancements have
and how these properties do or do not depend on properties of the
algebra or the space.
The focus of this article is on the smoothness
of dg enhancements (see Definition~\ref{d:smooth}), where we
always work over a field $\kk$.
Since dg enhancements are often essentially
unique
(see \cite{lunts-orlov-enhancement},
\cite{canonaco-stellari-uniqueness-of-dg-enhancements}), we are a
bit sloppy in this introduction and just say that a triangulated
category is smooth when we mean that a certain natural dg
enhancement has this property
(cf.\ Definition~\ref{d:T-smooth} and Remark~\ref{r:DbmodA-smooth}
for the choices used in this article).

For example, a quasi-projective scheme $X$ over a field $\kk$ is
smooth in the sense of algebraic geometry if and only if
the category
$\D_\pf(X)$ of perfect complexes on $X$ is smooth in
the dg sense (see
\cite{lunts-categorical-resolution,
  valery-olaf-new-enhancements}).
However, if the field $\kk$ is perfect, the
bounded derived category $\D^\bd(\coh(X))$ of coherent sheaves on
$X$ is always
smooth, regardless of $X$ being smooth or not
(see
\cite{lunts-categorical-resolution,
  valery-olaf-new-enhancements}). This example illustrates the
phenomenon that
different triangulated subcategories
of the unbounded derived category $\D_\qc(X)$
naturally associated to $X$
may or may not detect smoothness of $X$.

If $A$ is a noetherian $\kk$-algebra (associative and unital, but
not necessarily
commutative) it is natural and
interesting to ask whether the bounded derived category
$\D^\bd(\mod(A))$ of finitely generated $A$-modules is
smooth. (The category $\per(A)$ of
perfect complexes of $A$-modules, for an arbitrary $\kk$-algebra
$A$, is not so interesting: it is
smooth if and only if the
$A \otimes_\kk A^\opp$-module $A$ has finite projective
dimension.)
If $A$ is commutative and finitely generated, the answer is clear
from the above discussion by taking $X=\Spec A$. Hence one may
hope that $\D^\bd(\mod(A))$ is always smooth.

In this article, we extend the methods of
\cite{lunts-categorical-resolution,
  valery-olaf-new-enhancements} to prove
the smoothness of bounded derived
categories for some classes of noncommutative algebras.

\begin{mtheorem}
  [{see Theorem~\ref{t:Dbmod-findimalg-separable-smooth}}]
  \label{t:intro:Dbmod-findimalg-separable-smooth}
  Let $A$ be a finite-dimensional algebra over a field $\kk$ such
  that $\frac{A}{\rad(A)}$ is separable over $\kk$ (this
  condition is automatic if $\kk$ is perfect).
  Then
  $\D^\bd(\mod(A))$ is smooth over $\kk$.
\end{mtheorem}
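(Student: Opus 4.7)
The plan is to deduce this from the general sufficient smoothness criterion promised in the introduction, applied to a well-chosen classical generator. First I would reduce to a dg algebra: set $N := \rad(A)$ and $S := A/N$. Since $A$ is finite-dimensional, $N$ is nilpotent and every finitely generated $A$-module has a finite composition series with simple subquotients; as each simple $A$-module is a direct summand of $S$, the module $S$ is a classical generator of $\D^\bd(\mod(A))$. Choosing a dg model $\mathcal{B}$ for $\Rd\End_A(S)$, the natural enhancement of $\D^\bd(\mod(A))$ is quasi-equivalent to $\per(\mathcal{B})$, and the desired smoothness reduces to showing that $\mathcal{B}$ is smooth as a dg $\kk$-algebra.

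Next I would exploit the separability of $S$ in two ways. By Wedderburn--Malcev, the surjection $A \sra S$ admits a $\kk$-algebra splitting $S \hra A$, so $A = S \oplus N$ as $S$-bimodules and the radical filtration $A \supset N \supset N^2 \supset \dots \supset N^n = 0$ lives in $S$-sub-bimodules. Moreover, separability of $S$ means that $S$ is projective over $S^e := S \otimes_\kk S^\opp$, so $S$ is itself smooth over $\kk$ and every finite-dimensional $S$-bimodule --- in particular each graded piece $N^i/N^{i+1}$ --- is perfect over $S^e$.

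From these inputs I would proceed by induction along the radical filtration, using the intermediate quotients $A_i := A/N^{i+1}$ and their associated dg endomorphism algebras $\mathcal{B}_i := \Rd\End_{A_i}(S)$. Going from stage $i$ to stage $i+1$ costs only an extension by the perfect $S^e$-module $N^{i+1}/N^{i+2}$, and the general smoothness criterion of the paper should be exactly what is needed to propagate perfectness of the diagonal bimodule through such extensions, starting from the base case that $S = \mathcal{B}_0$ is smooth.

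The hard part will be controlling the higher homotopy structure of $\mathcal{B}$ throughout the induction. Passing from $S$ to $A$ turns on Massey products and $A_\infty$-deformations that are invisible at the level of the graded $\Ext$-algebra, and one must ensure these do not obstruct the perfectness of the $\mathcal{B}^e$-resolution produced at each step. Separability of $S$ is precisely what makes the relevant relative Hochschild obstructions vanish, so the bulk of the work will consist in translating this classical algebraic fact into the dg-categorical form demanded by the general smoothness criterion.
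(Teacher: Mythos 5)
Your opening reduction --- $S = A/\rad(A)$ is a classical generator, so smoothness of $\D^\bd(\mod(A))$ is equivalent to smoothness of the dg endomorphism algebra $\mathcal{B}$ of a projective resolution of $S$ --- is correct and matches the paper (Lemma~\ref{l:class-gen-Dbmod-fin-dim-alg}, Remark~\ref{r:smoothness-dg-endos-classical-generator}). From there, however, your plan diverges from the paper's and, as written, has a genuine gap. The criterion you say you will apply (Theorem~\ref{t:sufficient-for-smoothness}) is stated in terms of a \emph{dualizing bimodule} $\mathscr{D}$ for a subcategory $\mathcal{T} \subset \D(A)$: one must exhibit such a $\mathscr{D}$, check that the classical generator and its $\mathscr{D}$-dual are bounded pseudo-coherent (A1), and check that $\mathscr{D}$ lies in the thick subcategory generated by $E \otimes_\kk E^\vee$ (A2). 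Your proposal never produces a dualizing object and never checks either hypothesis; the criterion has nothing to say about "propagating perfectness of the diagonal bimodule" along a filtration. Nor is the induction you sketch on $A_i := A/N^{i+1}$ a tame one: the dg algebras $\mathcal{B}_i = \Rd\End_{A_i}(S)$ are not related by extensions of dg bimodules in a way that would make an inductive smoothness statement self-evident --- passing from $A_i$ to $A_{i+1}$ changes \emph{all} the Ext-groups between simples, not just a top graded piece --- and your closing paragraph concedes this ("Massey products and $A_\infty$-deformations... the relevant relative Hochschild obstructions") without supplying the lemma that would make the step go through. That unproved lemma is the whole content.

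The paper's route avoids the filtration entirely. It takes $\mathcal{T} = \D^\bd_{\mod(A)}(A)$ and $\mathscr{D} := \Hom_\kk(\leftidx{_A}{A}{_A},\kk)$, the bimodule realizing the classical $\kk$-linear duality $M \mapsto M^*$ between $\D^\bd(\mod(A))$ and $\D^\bd(\mod(A^\opp))^\opp$; condition (A1) is then immediate. Separability of $S$ enters only to verify (A2): by Proposition~\ref{p:otimes-rad-separable-alg} one has $\rad(A \otimes_\kk A^\opp) = \rad(A)\otimes_\kk A^\opp + A \otimes_\kk \rad(A^\opp)$, so (Corollary~\ref{c:otimes-rad-separable-alg}) $E\otimes_\kk E^*$ classically generates $\D^\bd(\mod(A\otimes_\kk A^\opp))$, which contains $\mathscr{D}$. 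Your instinct that separability of $S$ is the crux is correct, but the way the paper exploits it is a radical/tensor-product computation, not a homotopy-theoretic obstruction argument --- and this is precisely what lets it avoid engaging with $A_\infty$-structure at all. If you want to pursue your filtration idea you would need, as a separate and nontrivial theorem, a statement of the form "if $B \to B'$ is a map of dg algebras whose cofiber is perfect as a $B'$-bimodule and $B$ is smooth, then so is $B'$," together with a verification that the radical filtration produces such maps; neither is in the paper, and the second is not obviously true.
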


This theorem answers affirmatively a question of Osamu Iyama
\cite{iyama-oberwolfach}. In down-to-earth terms it says that
the dg endomorphism algebra of a projective resolution of the
direct sum of the simple $A$-modules is perfect as a bimodule
over itself (see Remark~\ref{r:dgEndP-smooth}).

We also prove the following partial generalization of
Theorem~\ref{t:intro:Dbmod-findimalg-separable-smooth}.

\begin{mtheorem}
  [{see Theorem~\ref{t:DbmodA-smooth}}]
  \label{t:intro:DbmodA-smooth}
  Let $A$ be an algebra
  over a
  perfect field
  $\kk$. Assume
  that $A$ is a finite module over its center $\Z(A)$
  and that the center $\Z(A)$ is a finitely generated
  $\kk$-algebra.
  Then
  $\D^\bd(\mod(A))$ is smooth over $\kk$.
\end{mtheorem}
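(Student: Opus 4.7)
The plan is to deduce Theorem~\ref{t:intro:DbmodA-smooth} from the general sufficient criterion for smoothness established earlier in the paper, combined with Theorem~\ref{t:intro:Dbmod-findimalg-separable-smooth} and the previously known smoothness of $\D^\bd(\coh X)$ for $X$ of finite type over a perfect field. Set $Z := \Z(A)$ and $X := \Spec Z$. The hypotheses say that $A$ is a coherent sheaf of $\mathcal{O}_X$-algebras on $X$, and the centre homomorphism $Z \to A$ makes $\D^\bd(\mod A)$ a category fibred, in a suitable dg sense, over $\D^\bd(\coh X)$.

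I would first reduce to a fibrewise/local statement. For each maximal ideal $\mfp \subset Z$, the residue field $\kappa(\mfp) := Z/\mfp$ is a finite extension of $\kk$; since $\kk$ is perfect and $Z$ is finitely generated, $\kappa(\mfp)$ is itself perfect. The fibre $A \otimes_Z \kappa(\mfp)$ is then a finite-dimensional $\kappa(\mfp)$-algebra whose semisimple quotient is automatically separable, so Theorem~\ref{t:intro:Dbmod-findimalg-separable-smooth} supplies the smoothness of its bounded derived category over $\kappa(\mfp)$, and a fortiori over $\kk$ (finite separable base fields compose). This produces the desired perfect bimodule data in a neighbourhood of each closed point of $X$.

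The main step is to globalise these local pieces to a single perfect $A \otimes_\kk A^\opp$-module resolution of $A$ itself, or equivalently, to verify the hypotheses of the general criterion from earlier in the paper. The crucial input here is the smoothness of $\D^\bd(\coh X)$, which plays exactly the role that honest smoothness of $X$ would play in a naive argument — just as in the scheme-theoretic work of \cite{lunts-categorical-resolution, valery-olaf-new-enhancements}, this allows one to absorb the singularities of $X$ into the derived picture without passing to a resolution. I expect the main technical obstacle to lie in this globalisation: producing, from the pointwise bimodule resolutions furnished by Theorem~\ref{t:intro:Dbmod-findimalg-separable-smooth} together with the smooth dg enhancement of $\D^\bd(\coh X)$, a single global perfect bimodule, using the $Z$-action to glue. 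The general criterion is presumably designed precisely to package this argument into a black box applicable in the present situation.
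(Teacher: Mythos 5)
Your proposed route does not match the paper's and, more importantly, leaves the load-bearing steps unaddressed. The actual proof does invoke the general criterion (Theorem~\ref{t:sufficient-for-smoothness}), but the verification of its hypotheses requires three specific inputs that your sketch never produces: (1) a concrete dualizing bimodule --- the paper takes $\mathscr{D} := \Hom_R(A,\omega)$ where $R = \Z(A)$ and $\omega$ is a dualizing complex for the commutative ring $R$; (2) a classical generator of $\D^\bd(\coh(\mathcal{A}))$ for the associated coherent $\mathcal{O}_{\Spec R}$-algebra $\mathcal{A}$, which is Theorem~\ref{t:generator-DbcohA} and itself a substantial result proved by Noetherian induction through the Verdier localization sequence of Theorem~\ref{t:verdier-open-closed-cohA} and the Azumaya-algebra analysis of Proposition~\ref{p:locally-nilpotently-azumaya}; and (3) the boxproduct theorem (Theorem~\ref{t:boxtimes-and-generators}), which identifies $E \boxtimes F$ as a classical generator of $\D^\bd(\coh(\mathcal{A} \boxtimes \mathcal{A}^\opp))$ and is exactly what verifies condition~(A2). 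Your sketch supplies none of these.

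Two more specific objections. First, the ``fibrewise'' reduction to the algebras $A \otimes_Z \kappa(\mfp)$ over closed points $\mfp$, while psychologically natural, does not interact with the criterion: smoothness is a statement about a single bimodule $\mathscr{D}$ being in the thick subcategory generated by $E \otimes_\kk F$ in $\D(A \otimes_\kk A^\opp)$, and there is no apparent mechanism (nor do you propose one) by which pointwise smoothness of the fibres over closed points assembles into this. The global Noetherian induction in the paper runs over closed subschemes of $\Spec R$ (via Proposition~\ref{p:glue-generator-open-closed}), not over closed points, and produces a classical generator rather than directly producing a perfect bimodule. Second, you identify ``smoothness of $\D^\bd(\coh X)$'' as the crucial input that absorbs the singularities of $X$; but the paper never uses that fact here. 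What it uses from commutative algebra is the existence of a dualizing complex for the finitely generated $\kk$-algebra $R = \Z(A)$ --- a much weaker and more hands-on input --- together with perfectness of $\kk$ (needed both for the boxproduct theorem and, inside Theorem~\ref{t:boxtimes-and-generators}, for the regularity of tensor products of regular rings). Smoothness of $\D^\bd(\coh X)$ and Theorem~\ref{t:intro:Dbmod-findimalg-separable-smooth} are conclusions that \emph{follow} from the same general machinery, not ingredients that feed into the present proof.
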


In fact, we prove a general result
from which both Theorems
\ref{t:intro:Dbmod-findimalg-separable-smooth}
and \ref{t:intro:DbmodA-smooth} follow: Given a $\kk$-algebra
$A$ (or, more generally, a $\kk$-linear category $A$) and a
triangulated subcategory $\mathcal{T}$ of $\D(A)$,
Theorem~\ref{t:sufficient-for-smoothness} gives a sufficient
condition for the smoothness of $\mathcal{T}$.
We expect that this theorem could be used for example to prove
the smoothness of $\D^\bd(\mod(A))$ for certain noetherian
$\kk$-algebras $A$.

Let us mention two results of independent interest used in the
proof of Theorem~\ref{t:intro:DbmodA-smooth}. The
first result concerns the existence of a classical generator of
the
bounded derived category of coherent modules over a noncommutative structure sheaf.

\begin{mtheorem}[{see Theorem~\ref{t:generator-DbcohA}}]
  \label{t:intro:generator-DbcohA}
  Let $X$ be a noetherian J-2 scheme (see
  Definition~\ref{d:j2})
  and
  $\mathcal{A}$ a coherent $\mathcal{O}_X$-algebra (which is
  assumed to be unital and
  associative, but not necessarily commutative). Then
  $\D^\bd(\coh(\mathcal{A}))$ has a classical generator.
\end{mtheorem}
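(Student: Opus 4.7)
The plan is noetherian induction on closed subschemes $Y \subseteq X$, proving at each step that $\D^\bd(\coh(\mathcal{A}_Y))$ admits a classical generator, where $\mathcal{A}_Y := \mathcal{A} \otimes_{\mathcal{O}_X} \mathcal{O}_Y$. The empty base case is trivial, so fix $Y$ and assume the claim for all its proper closed subschemes. A first dévissage by powers of the nilradical $\mathcal{N}$ of $\mathcal{O}_Y$ (globally nilpotent, since $Y$ is noetherian) reduces to the case $Y$ reduced: the finite filtration $M \supseteq \mathcal{N} M \supseteq \mathcal{N}^2 M \supseteq \cdots$ on any coherent $\mathcal{A}_Y$-module $M$ has subquotients which are coherent $\mathcal{A}_{Y_{\red}}$-modules, so any classical generator for $\D^\bd(\coh(\mathcal{A}_{Y_{\red}}))$ also classically generates $\D^\bd(\coh(\mathcal{A}_Y))$.

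Assume $Y$ reduced. The J-2 property, inherited by closed subschemes of $X$ since they are of finite type over $X$, supplies an open dense regular subscheme $U \subseteq Y$; set $Z := Y \setminus U$ with reduced structure. By the inductive hypothesis, $\D^\bd(\coh(\mathcal{A}_Z))$ admits a classical generator $G_Z$. The essential task is to produce a classical generator $G_U$ of $\D^\bd(\coh(\mathcal{A}_U))$ on the regular noetherian scheme $U$. Since $\mathcal{O}_U$ is regular of finite Krull dimension, $\mathcal{A}_U$ has finite Tor-dimension over $\mathcal{O}_U$, so the derived tensor $\mathcal{A}_U \otimes^{L}_{\mathcal{O}_U} (-)$ defines a bounded functor $\D^\bd(\coh(\mathcal{O}_U)) \to \D^\bd(\coh(\mathcal{A}_U))$. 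Combining a classical generator $F$ of $\D^\bd(\coh(\mathcal{O}_U))$ (provided by the commutative case of the theorem applied to the noetherian regular J-2 scheme $U$) with this functor produces the candidate $G_U := \mathcal{A}_U \otimes^{L}_{\mathcal{O}_U} F$; a secondary dévissage, along a proper closed subscheme of $U$ where the homological complexity of $\mathcal{A}_U$ drops, is then invoked to verify that $G_U$ classically generates all of $\D^\bd(\coh(\mathcal{A}_U))$.

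Finally, extend $G_U$ to a coherent $\mathcal{A}_Y$-module $\widetilde{G}_U$ on $Y$, which is possible because pushforward along the open immersion $U \hookrightarrow Y$ of noetherian schemes exhibits any coherent sheaf on $U$ as the restriction of a coherent sheaf on $Y$. Restriction induces an equivalence from $\D^\bd(\coh(\mathcal{A}_Y))$ modulo $\D^\bd_Z(\coh(\mathcal{A}_Y))$ to $\D^\bd(\coh(\mathcal{A}_U))$, so for any coherent $\mathcal{A}_Y$-module $M$ there exists $N \in \thick(\widetilde{G}_U)$ with a morphism $N \to M$ whose cone lies in $\D^\bd_Z(\coh(\mathcal{A}_Y))$; the latter is classically generated by $G_Z$ (viewed on $Y$ via pushforward along the closed immersion $Z \hookrightarrow Y$). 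Hence $G_Z \oplus \widetilde{G}_U$ is a classical generator of $\D^\bd(\coh(\mathcal{A}_Y))$. I expect the main obstacle to be precisely the regular-open step: $\mathcal{A}_U$ may have infinite global dimension even on regular $U$ (e.g.\ $\mathcal{O}_U[\epsilon]/(\epsilon^2)$), so one cannot take $G_U = \mathcal{A}_U$, and the real work lies in the secondary dévissage that upgrades the commutative classical generator $F$ to a noncommutative one.
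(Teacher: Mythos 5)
Your overall framework—noetherian induction, dévissage along the nilradical, passing to the regular locus via J-2, and gluing along the Verdier localization sequence of an open-closed decomposition—matches the skeleton of the paper's argument, and your treatment of the closed part $Z$ and the gluing step (including the need to extend $G_U$ to $Y$, handled in the paper by Theorem~\ref{t:verdier-open-closed-cohA} and Proposition~\ref{p:glue-generator-open-closed}) is sound.

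However, you correctly flag the regular-open step as the crux, and the candidate $G_U := \mathcal{A}_U \otimes^L_{\mathcal{O}_U} F$ you propose there is simply wrong; no ``secondary dévissage'' can repair it. Take $U = \Spec\kk$ and $\mathcal{A}_U = \kk[\epsilon]/(\epsilon^2)$: then $F$ is a nonzero bounded complex of $\kk$-vector spaces and $G_U$ is a bounded complex of free $\mathcal{A}_U$-modules, so $\thick(G_U) = \D(\mathcal{A}_U)_\pf$, while the simple module $\kk$ has infinite projective dimension over $\mathcal{A}_U$ and hence does not lie in $\thick(G_U)$. Here there is no proper closed subscheme of $U$ to devisser along: the homological complexity of $\mathcal{A}_U$ is constant. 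More fundamentally, your only dévissage is by the nilradical $\mathcal{N}$ of $\mathcal{O}_Y$, which is a commutative phenomenon and cannot see the noncommutative nilpotency inside $\mathcal{A}$.

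The missing idea in the paper (Proposition~\ref{p:locally-nilpotently-azumaya}) is to kill a nilpotent two-sided ideal of $\mathcal{A}(U)$ itself, not merely the image of the nilradical of $\mathcal{O}_Y(U)$. One shrinks $U$ to an affine integral piece $\Spec R$, base changes to the fraction field $K = \Quot(R)$, observes that $\rad(A_K)$ is nilpotent, intersects it with $A = \mathcal{A}(U)$ to obtain a nilpotent two-sided ideal $I \subset A$, and (after further shrinking $U$) arranges that $A/I$ is a finite product of Azumaya algebras over regular centers. Modules over such algebras have finite projective dimension (Lemma~\ref{l:Azumaya-regular-center-pdim}), and then a dévissage along the \emph{noncommutative} nilpotent ideal $I$ (Lemma~\ref{l:classical-generator-DbmodA-Azumaya-quotient}) shows $A/I$ classically generates $\D^\bd(\mod(A))$. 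In your $\kk[\epsilon]/(\epsilon^2)$ example this produces $I = (\epsilon)$ and the classical generator $\kk$, which is exactly what your candidate $G_U$ fails to reach. So the gap is not in the gluing or the commutative reduction, but in the identification of the correct nilpotent ideal and the Azumaya/Artin-Wedderburn structure theory needed to control the quotient.
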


The J-2 condition in this result is actually very natural
by the following interesting recent result
\cite[Prop.~2.8]{iyengar-takahashi-openness-2018}
by Iyengar and Takahashi:
A commutative noetherian ring $R$ is J-2 if and only if
$\D^\bd(\mod(A))$ has a classical generator for any finite
commutative $R$-algebra $A$.

The proof of Theorem~\ref{t:intro:generator-DbcohA}
is based on a Verdier localization
sequence given by
the following theorem and a technical
result using Azumaya
algebras (see
Proposition~\ref{p:locally-nilpotently-azumaya}).

\begin{mtheorem}[{see Theorem~\ref{t:verdier-open-closed-cohA}}]
  \label{t:intro:verdier-open-closed-cohA}
  Let $X$ be a
  noetherian scheme and
  $\mathcal{A}$ a
  coherent $\mathcal{O}_X$-algebra.
  Let $U$ be an open subscheme of $X$ and $Z :=X-U$ its
  closed complement.
  Then the sequence of triangulated categories
  \begin{equation*}
    \D^\bd_Z(\coh(\mathcal{A}))
    \ra
    \D^\bd(\coh(\mathcal{A}))
    \ra
    \D^\bd(\coh(\mathcal{A}|_U))
  \end{equation*}
  is a Verdier localization sequence
  (see Definition~\ref{d:verdier-sequence})
  where the first arrow is the
  inclusion and the second arrow is restriction to $U$.
\end{mtheorem}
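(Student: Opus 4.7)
The plan is to verify the three defining conditions of a Verdier localization sequence (cf.\ Definition~\ref{d:verdier-sequence}): (i) the left arrow is a fully faithful embedding of a thick triangulated subcategory; (ii) this subcategory is exactly the kernel of the right arrow; and (iii) the induced functor from the Verdier quotient $\D^\bd(\coh(\mathcal{A}))/\D^\bd_Z(\coh(\mathcal{A}))$ to $\D^\bd(\coh(\mathcal{A}|_U))$ is an equivalence. Conditions (i) and (ii) are almost formal: the inclusion is the embedding of a full triangulated subcategory, $\D^\bd_Z$ is closed under summands, shifts, and cones (because ``supported on $Z$'' is), and since $j^*$ is exact, $j^*F \cong 0$ in $\D^\bd(\coh(\mathcal{A}|_U))$ is equivalent to $\HH^i(F)|_U = 0$ for all $i$, i.e.\ to $F \in \D^\bd_Z(\coh(\mathcal{A}))$.

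The heart of the argument is (iii), which splits into essential surjectivity of $j^*$ and fullness modulo the kernel. For essential surjectivity, the key lemma is that every coherent $\mathcal{A}|_U$-module extends to a coherent $\mathcal{A}$-module on $X$. I would argue as follows: since $U$ is a noetherian quasi-compact open, $j_*$ sends quasi-coherent $\mathcal{A}|_U$-modules to quasi-coherent $\mathcal{A}$-modules; and on a noetherian scheme every quasi-coherent $\mathcal{A}$-module (where $\mathcal{A}$ is itself coherent over $\mathcal{O}_X$) is the filtered union of its coherent $\mathcal{A}$-submodules. Hence for any coherent $\mathcal{A}|_U$-module $F$ one can extract from $j_*F$ a coherent $\mathcal{A}$-submodule $\tilde{F}$ with $\tilde{F}|_U = F$. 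To extend a bounded complex, I would proceed by induction on its length, extending one term at a time and enlarging the chosen coherent extension so that the differential's image lands in it; this yields a bounded complex of coherent $\mathcal{A}$-modules restricting to the given one on $U$.

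For fullness modulo the kernel, a morphism $\phi\colon j^*\tilde{F} \ra j^*\tilde{G}$ in $\D^\bd(\coh(\mathcal{A}|_U))$ is represented by a roof $j^*\tilde{F} \xla{\sim} H \ra j^*\tilde{G}$ with $H \in \D^\bd(\coh(\mathcal{A}|_U))$. Extending $H$ to a coherent $\mathcal{A}$-complex $\tilde{H}$ on $X$ by the same argument, one must extend the two leg maps. The obstruction to extending a morphism $\tilde{H}|_U \ra \tilde{F}|_U$ to a morphism $\tilde{H} \ra \tilde{F}$ is controlled by objects supported on $Z$, and hence vanishes after passing to the Verdier quotient by $\D^\bd_Z(\coh(\mathcal{A}))$; the extended quasi-isomorphism becomes an isomorphism in the quotient because its cone lies in $\D^\bd_Z(\coh(\mathcal{A}))$.

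\textbf{Main obstacle.} The principal technical step is the coherent extension lemma and its iteration to complexes and morphisms in the $\mathcal{A}$-module setting. For ordinary coherent sheaves this is classical (Grothendieck); the task here is to verify that the standard reductions to the affine case together with the coherence of $\mathcal{A}$ over $\mathcal{O}_X$ let the argument go through essentially verbatim. Once this is available, all remaining verifications for the Verdier localization sequence are formal manipulations with roofs.
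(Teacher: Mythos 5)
Your overall strategy---reduce to the direct image $j_*$, exploit that a quasi-coherent $\mathcal{A}$-module is the filtered union of its coherent $\mathcal{A}$-submodules (which holds because $\mathcal{A}$ is $\mathcal{O}_X$-coherent and $X$ is noetherian), and extract coherent subcomplexes---is exactly the engine the paper uses. Your treatment of essential surjectivity is essentially the paper's; the paper's cleaner device for passing from components to complexes is to note that if $N^n \subset M^n$ are coherent submodules, then $N^n + d(N^{n-1})$ gives a coherent subcomplex, which avoids the induction on length but is morally the same. Conditions (i) and (ii) are indeed routine.

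There is, however, a genuine gap: you treat the verification that the induced functor $\Phi \colon \mathcal{V} := \D^\bd(\coh(\mathcal{A}))/\D^\bd_Z(\coh(\mathcal{A})) \to \D^\bd(\coh(\mathcal{A}|_U))$ is an equivalence as if it reduced to essential surjectivity plus ``fullness modulo the kernel,'' and you never address faithfulness. Unlike the Serre-quotient situation for abelian categories (where $\ker F = \mathcal{S}$ and exactness of $F$ automatically force the induced functor to be faithful, via an argument with images), for Verdier quotients of triangulated categories faithfulness of $\Phi$ must be proved; having $\ker(j^*) = \D^\bd_Z(\coh(\mathcal{A}))$ and fullness does not formally imply it. The paper devotes a separate and nontrivial step to it: given a chain map $f \colon M \to N$ with $j^*(f) = 0$ in $\D^\bd(\coh(\mathcal{A}_U))$, one uses a null-homotopy of $s \circ j^*(f)$ for a suitable quasi-isomorphism $s \colon j^*N \to L$, pushes $s$ and the homotopy to $X$ via the adjunction, and then extracts a coherent subcomplex $K \subset j_*L$ that contains the images of both and satisfies $j^*K = j^*j_*L$; the resulting map $s'' \colon N \to K$ has cone supported on $Z$, hence becomes invertible in $\mathcal{V}$, and since $s'' \circ f$ is null-homotopic one concludes $f = 0$ in $\mathcal{V}$. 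Relatedly, your fullness paragraph is too vague where it matters: ``the obstruction to extending a morphism is controlled by objects supported on $Z$'' gestures at the right idea but does not pin down the construction. The paper's fullness argument is parallel to the faithfulness one: given a chain map $f \colon j^*M \to j^*N$, form $j_*(f) \circ \eta_M$ and $\eta_N$ landing in $j_*j^*N$, extract a coherent subcomplex $K$ containing both images with $j^*K = j^*j_*j^*N$, obtain factorizations $\mu \colon M \to K$ and $\kappa \colon N \to K$ with $j^*(\kappa)$ an isomorphism, and then $\kappa^{-1}\mu$ is the preimage in $\mathcal{V}$; the general case is reduced to this chain-level case via roofs and the already-established essential surjectivity and faithfulness. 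So: your scaffolding is correct and aligned with the paper's, but you need to add an explicit faithfulness argument and make the fullness construction precise along the lines just indicated.
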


\subsection{Acknowledgments}
\label{sec:acknowledgments}

It is our pleasure to thank Darrell Haile for some useful
conversations and Amnon Neeman for his help with
Theorem~\ref{t:verdier-open-closed-cohA}.
We thank Srikanth Iyengar and Ryo Takahashi for making us aware
of their article \cite{iyengar-takahashi-openness-2018}.
We thank Osamu Iyama and Steffen Oppermann for
suggesting that
Theorem~\ref{t:intro:Dbmod-findimalg-separable-smooth}
may generalize to non-positive dg algebras
with finite-dimensional cohomology. We thank the referee for
useful comments.

Valery Lunts was partially supported by Laboratory of Mirror
Symmetry NRU HSE, RF Government grant, ag.\ No.\ 14.641.31.0001.
Alexey Elagin's study has been funded within the framework of the HSE University Basic Research Program and the Russian Academic Excellence Project '5-100'.

\subsection{Conventions}
\label{sec:conventions}

We fix a field $\kk$.
Whenever $\kk$ is present, all categories and functors are
$\kk$-linear.
By a dg category we mean a $\kk$-linear dg category.
Sometimes we write $\otimes$ instead of $\otimes_\kk$ or
$\otimes_{\mathcal{O}_X}$.

Rings and algebras are assumed to be associative and unital, but
not necessarily commutative.
An algebra over a commutative ring $R$ is a
ring $A$ together with a morphism $R \ra A$ of rings landing in
the center $\Z(A)$ of $A$. Similarly, if $(X, \mathcal{O}_X)$ is
a ringed space where $\mathcal{O}_X$ is a sheaf of commutative
rings,
an $\mathcal{O}_X$-algebra is a sheaf
$\mathcal{A}$ of rings together with a morphism $\mathcal{O}_X
\ra \mathcal{A}$ of sheaves of
rings landing in
the center of $\mathcal{A}$.

By a module we mean a right module.
If $R$ is a ring, $\Mod(R)$ denotes the category of $R$-modules
and $\D(R)$ its derived category.
When we say that a ring is \emph{noetherian} we
mean that it is \emph{right noetherian}.
If $R$ is a noetherian ring,
$\mod(R)$ denotes the full subcategory of $\Mod(R)$ of finitely
generated $R$-modules, $\D(\mod(R))$ its derived category and
$\D^\bd(\mod(R))$ its subcategory of objects with bounded
cohomology.
If $R$ is a finite-dimensional algebra over a field $\kk$, then
$\mod(R)$ is just the category of finite-dimensional $R$-modules.

A thick subcategory of a triangulated category $\mathcal{T}$ is a
strictly full triangulated subcategory that is closed under
taking direct summands in $\mathcal{T}$. Given an object $E$ of
$\mathcal{T}$ we write $\thick(E)=\thick_\mathcal{T}(E)$ for the
smallest thick
subcategory of $\mathcal{T}$ containing $E$. The object $E$ is a
classical generator of $\mathcal{T}$ if and only if
$\thick(E)=\mathcal{T}$.

\section{Smoothness of derived categories of linear categories}
\label{sec:smoothn-deriv-categ}

This section is written in greater generality than needed in the
rest of this article.
The main result of this section,
Theorem~\ref{t:sufficient-for-smoothness},
concerns categories of modules over $\kk$-linear categories;
we apply this theorem later on only to categories of modules
over $\kk$-algebras.

\subsection{Modules over \texorpdfstring{$\kk$}{k}-linear
  categories}
\label{sec:modul-over-texorpdfs}

Let $\mathcal{A}$ be a $\kk$-linear category.
The reader may think of a $\kk$-algebra which is the same thing
as a $\kk$-linear category with precisely one object. In the rest
of this article, the results of this section will only be applied
in this special case.

A (right) $\mathcal{A}$-module is a functor
$\mathcal{A}^\opp
\ra \Mod(\kk)$, where $\Mod(\kk)$ is the category of $\kk$-modules.
Let $\Mod(\mathcal{A})$ be the (abelian) category of
$\mathcal{A}$-modules. Let
\begin{align*}
  \Yo \colon \mathcal{A} & \ra \Mod(\mathcal{A}),\\
  A & \mapsto \mathcal{A}(-,A),
\end{align*}
be the Yoneda functor. The objects of the essential image of this
functor are the representable $\mathcal{A}$-modules.
An $\mathcal{A}$-module is
finitely generated if it is a
quotient of a finite coproduct of representable
$\mathcal{A}$-modules. It is
free
if it is isomorphic to a coproduct of representable
$\mathcal{A}$-modules. It follows that
a finitely generated free $\mathcal{A}$-module is isomorphic to a
finite coproduct of representable $\mathcal{A}$-modules.

Let $\ul\C(\mathcal{A}):=\ul\C(\Mod(\mathcal{A}))$ be the dg
category of complexes of $\mathcal{A}$-modules.
Let $\C(\mathcal{A})$ be the category with the same
objects
whose morphisms are the closed degree zero morphisms in
$\ul\C(\mathcal{A})$. Let $\D(\mathcal{A})$ be the derived category of
$\mathcal{A}$-modules.

Since $\D(\mathcal{A})$ has arbitrary coproducts, it is
Karoubian, i.\,e.\ idempotent complete. Therefore, a
strictly full subcategory of
$\D(\mathcal{A})$ is Karoubian if and only if it is closed under
taking direct summands in $\D(\mathcal{A})$.

We say that an object of $\D(\mathcal{A})$ is
\define{pseudo-coherent}
if it is isomorphic to a bounded above complex of finitely
generated free $\mathcal{A}$-modules
(cf.\
\cite[Exp.~I]{berthelot-grothendieck-illusie-SGA-6},
\cite[Ch.~2]{thomason-trobaugh-higher-K-theory},
\cite[\sptag{064N}]{stacks-project}).
Note that any bounded above complex of finitely generated projective
$\mathcal{A}$-modules is pseudo-coherent.
Let $\D(\mathcal{A})_\pseudocoh$ be the full subcategory of
$\D(\mathcal{A})$ of pseudo-coherent objects. It is a strictly
full triangulated subcategory and Karoubian
(see \cite[\sptag{064V}, \sptag{064X}]{stacks-project}).

We write $\D(\mathcal{A})_\pf$ for the full subcategory of
$\D(\mathcal{A})$ of perfect complexes, i.\,e.\ of objects that
are isomorphic to bounded complexes of finitely generated
projective $\mathcal{A}$-modules.
It is a
strictly
full Karoubian triangulated subcategory of
$\D(\mathcal{A})$.

We write $\D^-(\mathcal{A})$ for the full subcategory of
$\D(\mathcal{A})$ of objects $M$ whose total cohomology
$\bigoplus_{n \in \bZ} \HH^n(M)$ is bounded above.
Similarly, we define $\D^+(\mathcal{A})$ and
$\D^\bd(\mathcal{A})$.
These categories are strictly full Karoubian triangulated
subcategories of
$\D(\mathcal{A})$. We have $\D(\mathcal{A})_\pseudocoh \subset
\D^-(\mathcal{A})$ and $\D(\mathcal{A})_\pf \subset
\D^\bd(\mathcal{A})_\pseudocoh :=
\D(\mathcal{A})_\pseudocoh \cap \D^\bd(\mathcal{A})$.

\begin{remark}
  \label{r:algebra-as-lincat}
  Any algebra $A$ over a field $\kk$ may be viewed as a
  $\kk$-linear category with one object, so all the notions
  just introduced may
  be used for $A$. For example, $\D(A)$ is the derived category of
  the abelian category $\Mod(A)$ of $A$-modules.

  If we assume that $A$ is a noetherian $\kk$-algebra, then
  there is a canonical functor $\D(\mod(A)) \ra
  \D(\Mod(A))$. This functor is obviously
  fully faithful on $\D^\bd(\mod(A))$, and the essential image of
  this category under this functor
  is the full subcategory
  $\D^\bd_{\mod(A)}(A)$
  of $\D(A)$ of objects with bounded
  finitely generated cohomology. Hence we get an equivalence
  \begin{equation}
    \label{eq:DbmodA-DbApscoh}
    \D^\bd(\mod(A)) \sira \D^\bd_{\mod(A)}(A)
  \end{equation}
  of $\kk$-linear triangulated categories; note also that
  \begin{equation}
    \label{eq:Db_modA-DbApscoh}
    \D^\bd_{\mod(A)}(A)=\D^\bd(A)_\pseudocoh.
  \end{equation}
\end{remark}

\subsection{DG categories and smoothness}
\label{sec:dg-categ-smoothn}

Given a dg category $\mathcal{E}$, we denote its homotopy
category by $[\mathcal{E}]$. The derived category of dg
$\mathcal{E}$-modules is denoted by
$\D(\mathcal{E})$. To avoid misunderstandings, we emphasize that
the objects of $\D(\mathcal{E})$ are dg
$\mathcal{E}$-modules.
The full subcategory of $\D(\mathcal{E})$ of compact objects
coincides with $\thick(\mathcal{E})$ and is denoted
$\per(\mathcal{E})$. We remind the reader of the following
definition.

\begin{definition}
  \label{d:smooth}
  A dg category $\mathcal{E}$ is \define{smooth over $\kk$} if
  $\mathcal{E}
  =\leftidx{_\mathcal{E}}{\mathcal{E}}{_\mathcal{E}}\in
  \per(\mathcal{E} \otimes_\kk \mathcal{E}^\opp)$.
\end{definition}

Since a dg algebra is a dg category with precisely one object we
can also speak about $\kk$-smoothness of dg algebras.

\subsection{Smoothness for triangulated categories of modules}
\label{sec:smoothn-triang-categ}

We go back to the setting in section~\ref{sec:modul-over-texorpdfs}
and assume that $\mathcal{A}$ is a $\kk$-linear category.
Let $\ul\hProj(\mathcal{A})$ resp.\ $\ul\hInj(\mathcal{A})$ be the
full dg subcategory of
$\ul\C(\mathcal{A})$
of h-projective resp.\ h-injective objects.
They are dg enhancements of $\D(\mathcal{A})$.

If $\mathcal{T} \subset \D(\mathcal{A})$ is a strictly full
triangulated subcategory we write
$\ul\hProj_\mathcal{T}(\mathcal{A})$ for the full dg subcategory of
$\ul\hProj(\mathcal{A})$ whose objects are in $\mathcal{T}$. This is
a dg enhancement of $\mathcal{T}$. Similarly, we define the
quasi-equivalent dg enhancement
$\ul\hInj_\mathcal{T}(\mathcal{A})$ of $\mathcal{T}$.

\begin{definition}
  \label{d:T-smooth}
  A strictly full triangulated subcategory $\mathcal{T} \subset
  \D(\mathcal{A})$ is \define{smooth over $\kk$} if
  $\ul\hProj_\mathcal{T}(\mathcal{A})$ is a smooth dg
  $\kk$-category.
\end{definition}

\begin{remark}
  \label{r:smoothness-dg-endos-classical-generator}
  Let $\mathcal{T}$ be a strictly full triangulated subcategory of
  $\D(\mathcal{A})$.
  Assume that $P$ is an h-projective
  classical generator of
  $\mathcal{T}$. Then
  $\mathcal{T}$ is smooth over $\kk$ if
  and only if
  the endomorphism dg algebra
  $\ul\C_\mathcal{A}(P,P)$ is smooth over $\kk$
  (see \cite[Prop.~2.18]{valery-olaf-matrix-factorizations-and-motivic-measures}).
\end{remark}

\begin{remark}
  \label{r:DbmodA-smooth}
  If $A$ is a noetherian algebra over a field $\kk$ we also want
  to speak
  about smoothness of $\D^\bd(\mod(A))$. We say that
  $\D^\bd(\mod(A))$ is \define{smooth over $\kk$} if the
  equivalent category
  $\D^\bd_{\mod(A)}(A)$ is smooth over $\kk$ in the sense of the
  above definition (cf.\ equivalence~\eqref{eq:DbmodA-DbApscoh}).
  An equivalent condition is that the standard \textit{projective} dg
  enhancement of
  $\D^\bd(\mod(A))$ by bounded above complexes of finitely
  generated projective $A$-modules with bounded cohomology is a
  smooth dg $\kk$-category.
  Equivalently, we could use the standard \textit{injective}
  dg enhancement by
  bounded below complexes of injective $A$-modules with bounded
  finitely generated cohomology modules.
\end{remark}

\begin{remark}
  If a strictly full triangulated subcategory $\mathcal{T} \subset
  \D(\mathcal{A})$ is smooth over $\kk$ then
  $\mathcal{T}$ has a strong generator. This follows from
  \cite[Lemma~3.5, Lemma~3.6.(a)]{lunts-categorical-resolution}
  and the fact that any smooth dg $\kk$-category has a
  classical generator; we do not prove the last statement here.
  For the categories
  $\D^\bd(\mod(A))$ appearing in
  Theorems~\ref{t:Dbmod-findimalg-separable-smooth} and
  \ref{t:DbmodA-smooth} the existence of a classical
  generator is established in order to prove smoothness (see
  Lemma~\ref{l:class-gen-Dbmod-fin-dim-alg}
  and Theorem~\ref{t:generator-DbcohA}).
\end{remark}

\subsection{Dualizing objects}
\label{sec:dualizing-objects}

Let $\mathcal{A}$ and $\mathcal{B}$ be $\kk$-linear categories.
Consider the dg functor
\begin{align*}
  \otimes
  \colon \ul\C(\mathcal{A}) \otimes \ul\C(\mathcal{B}))
  & \ra \ul\C(\mathcal{A} \otimes \mathcal{B}),\\
  (M,N) & \mapsto M \otimes N,
\end{align*}
defined by $(M\otimes N)(A,B)=M(A) \otimes N(B)$ on objects
$(A,B) \in \mathcal{A} \otimes \mathcal{B}$.
Define the dg functor
\begin{align*}
  \Hom_\mathcal{B}(-,-) \colon
  \ul\C(\mathcal{B}) \otimes \ul\C(\mathcal{A} \otimes
  \mathcal{B})
  & \ra \ul\C(\mathcal{A}),\\
  (N,X)
  & \mapsto \Hom_\mathcal{B}(N,X),
\end{align*}
in the obvious way such that
\begin{equation}
  \label{eq:otimes-HomB}
  \ul\C_{\mathcal{A} \otimes \mathcal{B}}(M \otimes N, X)
  \cong \ul\C_\mathcal{A}(M, \Hom_\mathcal{B}(N,X))
\end{equation}
natural in $M$, $N$ and $X$ as above.

If we identify $\ul\C(\mathcal{A} \otimes \mathcal{B}) \cong
\ul\C(\mathcal{B} \otimes \mathcal{A})$ in the obvious way we
obtain isomorphisms
\begin{align*}
  \ul\C_\mathcal{A}(M, \Hom_\mathcal{B}(N,X))
  & \cong
    \ul\C_{\mathcal{A} \otimes \mathcal{B}}(M \otimes N, X)
  \\
  & \cong
    \ul\C_{\mathcal{B} \otimes \mathcal{A}}(N \otimes M, X)
  \\
  & \cong
  \ul\C_\mathcal{B}(N, \Hom_\mathcal{A}(M,X))
  \\
  & =
  \ul\C_\mathcal{B}^\opp(\Hom_\mathcal{A}(M,X),N).
\end{align*}
Hence we obtain an adjunction
\begin{equation*}
  \Hom_\mathcal{A}(-,X) \colon
  \ul\C(\mathcal{A}) \rightleftarrows
  \ul\C(\mathcal{B})^\opp \colon
  \Hom_\mathcal{B}(-,X)
\end{equation*}
of dg functors
for each $X \in \ul\C(\mathcal{A} \otimes \mathcal{B})$. The unit
and counit of this adjunction are the obvious maps ``into the
bidual with respect to $X$'': the unit $M \ra
\Hom_\mathcal{B}(\Hom_\mathcal{A}(M,X),X)$ is the map sending $m
\in M$
to the evaluation map sending $\mu \in \Hom_\mathcal{A}(M,X)$ to $\mu(m)$,
and the counit has essentially the same description.

On the level of derived categories we obtain an adjunction
\begin{equation}
  \label{eq:Hom-to-X-adjunction}
  \Rd\Hom_\mathcal{A}(-,X) \colon
  \D(\mathcal{A}) \rightleftarrows
  \D(\mathcal{B})^\opp \colon
  \Rd\Hom_\mathcal{B}(-,X).
\end{equation}
Its unit is the obvious natural transformation
\begin{equation*}
  \eta =\eta^X \colon \id \ra
  \Rd\Hom_\mathcal{B}(\Rd\Hom_\mathcal{A}(-,X),X)
\end{equation*}
between endofunctors of $\D(\mathcal{A})$
and its counit is the obvious natural transformation
\begin{equation*}
  \epsilon = \epsilon^X \colon \id \ra
  \Rd\Hom_\mathcal{A}(\Rd\Hom_\mathcal{B}(-,X),X)
\end{equation*}
between endofunctors of $\D(\mathcal{B})$ (strictly speaking, the
counit is the transformation of endofunctors of
$\D(\mathcal{B})^\opp$ obtained by reversing the arrow).

\begin{lemma}
  \label{l:max-cats-adj-equiv}
  Let $\mathcal{A}$, $\mathcal{B}$, $X$,
  $\eta^X$, $\epsilon^X$ be as above.
  Consider the following two full subcategories of
  $\D(\mathcal{A})$ and 
  $\D(\mathcal{B})$ 
  defined by
  \begin{align}
    \label{eq:DA^X}
    \D(\mathcal{A})^X & := \{M \in \D(\mathcal{A}) \mid
    \text{$\eta^X_M$ is an isomorphism}\},\\
    \notag
    \D(\mathcal{B})^X & := \{N \in \D(\mathcal{B}) \mid
    \text{$\epsilon^X_N$ is an isomorphism}\}.
  \end{align}
  Then these two subcategories are thick,
  the adjunction \eqref{eq:Hom-to-X-adjunction} restricts to
  an adjoint equivalence
  \begin{equation*}
    \Rd\Hom_\mathcal{A}(-, X) \colon
    \D(\mathcal{A})^X \rightleftarrows
    (\D(\mathcal{B})^X)^\opp \colon
    \Rd\Hom_\mathcal{B}(-, X),
  \end{equation*}
  and they form
  the biggest pair of subcategories on which
  \eqref{eq:Hom-to-X-adjunction} restricts to
  an adjoint equivalence.
\end{lemma}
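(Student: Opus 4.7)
My plan is to deduce everything from standard formal properties of an adjunction: the naturality of unit and counit, the triangle identities, and the five lemma. The key preliminary observation is that a composition of two contravariant triangulated functors is a \emph{covariant} triangulated functor, so the endofunctors $F := \Rd\Hom_\mathcal{B}(\Rd\Hom_\mathcal{A}(-,X),X)$ of $\D(\mathcal{A})$ and $G := \Rd\Hom_\mathcal{A}(\Rd\Hom_\mathcal{B}(-,X),X)$ of $\D(\mathcal{B})$ are covariant triangulated, and $\eta^X \colon \id \ra F$, $\epsilon^X \colon \id \ra G$ are natural transformations of such. No serious obstacle is expected; the only thing to watch is the contravariant bookkeeping.

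First I would show thickness of $\D(\mathcal{A})^X$; the case of $\D(\mathcal{B})^X$ is identical. By naturality the condition that $\eta^X_M$ be an isomorphism is preserved under isomorphism in $M$, and it is preserved by shifts since $F$ commutes with $[1]$. Applied to the morphism of distinguished triangles induced by $\eta^X$ on a triangle $M \ra M' \ra M'' \ra M[1]$, the five lemma yields two-out-of-three closure. Additivity of $F$ gives $\eta^X_{M \oplus N} = \eta^X_M \oplus \eta^X_N$, so being an isomorphism passes to direct summands. Hence $\D(\mathcal{A})^X$ is a strictly full triangulated subcategory closed under summands.

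Second, I would show that the adjunction \eqref{eq:Hom-to-X-adjunction} restricts. One triangle identity reads
\begin{equation*}
  \epsilon^X_{\Rd\Hom_\mathcal{A}(M,X)} \circ
  \Rd\Hom_\mathcal{A}(\eta^X_M,X) = \id_{\Rd\Hom_\mathcal{A}(M,X)},
\end{equation*}
so if $\eta^X_M$ is an isomorphism then so is $\epsilon^X_{\Rd\Hom_\mathcal{A}(M,X)}$; thus $\Rd\Hom_\mathcal{A}(-,X)$ carries $\D(\mathcal{A})^X$ into $\D(\mathcal{B})^X$, and the symmetric identity gives the reverse inclusion. The restricted functors inherit the adjunction, with unit and counit obtained by restricting $\eta^X$ and $\epsilon^X$; these are isomorphisms by the very definition of the subcategories, so the restriction is an adjoint equivalence.

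Finally, for maximality, suppose full subcategories $\mathcal{C} \subset \D(\mathcal{A})$ and $\mathcal{D} \subset \D(\mathcal{B})$ are such that \eqref{eq:Hom-to-X-adjunction} restricts to an adjoint equivalence between $\mathcal{C}$ and $\mathcal{D}^\opp$. By uniqueness of the unit and counit of the restricted adjunction, they must coincide with the restrictions of $\eta^X$ and $\epsilon^X$. Since the restriction is an equivalence, these restrictions are isomorphisms, which forces $\mathcal{C} \subset \D(\mathcal{A})^X$ and $\mathcal{D} \subset \D(\mathcal{B})^X$.
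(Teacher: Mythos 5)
Your proof is correct and follows essentially the same approach as the paper's, which factors the non-thickness claims through a general categorical lemma on adjunctions (Lemma~\ref{l:restrict-adj-to-adj-equiv}) whose proof uses exactly the two triangle-identity diagrams you invoke, and dismisses thickness as clear where you spell out the five-lemma argument. One small bookkeeping slip to note: with the paper's conventions (where $\epsilon^X$ points from $\id$ to the bidual on $\D(\mathcal{B})$) the triangle identity reads $\Rd\Hom_\mathcal{A}(\eta^X_M, X) \circ \epsilon^X_{\Rd\Hom_\mathcal{A}(M,X)} = \id_{\Rd\Hom_\mathcal{A}(M,X)}$, not the order you wrote, which is an endomorphism of $\Rd\Hom_\mathcal{A}(\Rd\Hom_\mathcal{B}(\Rd\Hom_\mathcal{A}(M,X),X),X)$ rather than of $\Rd\Hom_\mathcal{A}(M,X)$; the intended conclusion that $\epsilon^X_{\Rd\Hom_\mathcal{A}(M,X)}$ is an isomorphism whenever $\eta^X_M$ is still follows.
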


\begin{proof}
  The subcategories are clearly thick. The other claims are a
  special case of the following categorical
  Lemma~\ref{l:restrict-adj-to-adj-equiv}.
\end{proof}

\begin{lemma}
  \label{l:restrict-adj-to-adj-equiv}
  Let $(L, R, \eta, \epsilon) \colon \mathcal{C} \ra \mathcal{D}$
  be an adjunction of categories, given by functors
  $L \colon \mathcal{C} \rightleftarrows \mathcal{D} \colon R$
  and unit $\eta \colon \id \ra RL$ and counit
  $\epsilon \colon LR \ra \id$.  Let $\mathcal{C}'$ be the full
  subcategory of $\mathcal{C}$ of objects $C$ such that
  $\eta_C \colon C \ra RLC$ is an isomorphism.  Let
  $\mathcal{D}'$ be the full subcategory of $\mathcal{D}$ of
  objects $D$ such that $\epsilon_D \colon LRD \ra D$ is an
  isomorphism. Then our adjunction restricts to an adjoint
  equivalence
  $L \colon \mathcal{C}' \rightleftarrows \mathcal{D}' \colon R$.
  Moreover, it $\mathcal{C}''$ and $\mathcal{D}''$ are full
  subcategories of $\mathcal{C}$ and $\mathcal{D}$ such that our
  adjunction restricts to an adjoint equivalence
  $L \colon \mathcal{C}'' \rightleftarrows \mathcal{D}'' \colon
  R$, then $\mathcal{C}'' \subset \mathcal{C}'$ and
  $\mathcal{D}'' \subset \mathcal{D}'$.
\end{lemma}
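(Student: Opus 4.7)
The plan is to reduce the whole statement to the two triangle identities
$(R\epsilon)\circ(\eta R)=\id_R$ and $(\epsilon L)\circ(L\eta)=\id_L$, which is the standard way that an adjunction produces its own ``unit-iso'' and ``counit-iso'' subcategories. I would not invoke anything deeper.

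First I would show that $L$ sends $\mathcal{C}'$ into $\mathcal{D}'$ and $R$ sends $\mathcal{D}'$ into $\mathcal{C}'$. Pick $C\in\mathcal{C}'$, so $\eta_C$ is an isomorphism; then $L\eta_C$ is an isomorphism, and the triangle identity $\epsilon_{LC}\circ L\eta_C=\id_{LC}$ forces $\epsilon_{LC}=(L\eta_C)^{-1}$, which is an isomorphism, so $LC\in\mathcal{D}'$. The argument for $R$ is symmetric, using $R\epsilon_D\circ\eta_{RD}=\id_{RD}$. Once the functors restrict, the adjunction data $(\eta,\epsilon)$ restricts verbatim to $\mathcal{C}'$ and $\mathcal{D}'$, and the triangle identities are inherited. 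By construction $\eta$ is a natural isomorphism on $\mathcal{C}'$ and $\epsilon$ is a natural isomorphism on $\mathcal{D}'$, so the restricted adjunction is an adjoint equivalence.

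For maximality, suppose full subcategories $\mathcal{C}''\subset\mathcal{C}$ and $\mathcal{D}''\subset\mathcal{D}$ are stable under $L$ and $R$ and that the given adjunction restricts to an adjoint equivalence between them. Since the restricted adjunction has the same unit and counit as the original (just evaluated on the smaller categories), and since an adjoint equivalence has invertible unit and counit, we conclude that $\eta_C$ is an isomorphism for every $C\in\mathcal{C}''$ and $\epsilon_D$ is an isomorphism for every $D\in\mathcal{D}''$. By the definitions of $\mathcal{C}'$ and $\mathcal{D}'$ this gives $\mathcal{C}''\subset\mathcal{C}'$ and $\mathcal{D}''\subset\mathcal{D}'$.

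I do not expect a serious obstacle; the one point that requires care is the interpretation of ``our adjunction restricts to an adjoint equivalence'' in the maximality part. One must be clear that this means the restriction uses the same unit $\eta$ and counit $\epsilon$ (not some other adjunction between the restricted functors), so that ``unit/counit invertible after restriction'' is literally the same condition as ``$\eta_C$ (resp.\ $\epsilon_D$) invertible in the ambient category''. Apart from that, everything is a direct unwinding of the triangle identities.
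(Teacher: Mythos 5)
Your argument is correct and is essentially identical to the paper's: both use the two triangle identities to show that $L$ and $R$ restrict to $\mathcal{C}'$ and $\mathcal{D}'$, then observe that the restricted unit and counit are isomorphisms, and derive maximality directly from the definitions. Your added remark about interpreting ``restricts to an adjoint equivalence'' as using the same $\eta$ and $\epsilon$ is exactly the implicit reading the paper relies on.
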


\begin{proof}
  For arbitrary objects $C \in \mathcal{C}$ and $D \in
  \mathcal{D}$ there are commutative diagrams
  \begin{equation*}
    \xymatrix{
      {LC}
      \ar[r]^-{L\eta_C}
      \ar[rd]_-{\id_{LC}}
      &
      {LRLC} \ar[d]^-{\epsilon_{LC}}
      \\
      &
      {LC,}
    }
    \qquad
    \xymatrix{
      {RD}
      \ar[r]^-{\eta_{RD}}
      \ar[rd]_-{\id_{RD}}
      &
      {RLRD} \ar[d]^-{R\epsilon_D}
      \\
      &
      {RD.}
    }
  \end{equation*}
  For $C \in \mathcal{C}'$ the first diagram implies $LC \in
  \mathcal{D}'$. For $D \in \mathcal{D}'$ the second diagram
  implies $RD \in \mathcal{C}'$. This implies that $L$ and $R$
  restrict to the subcategories $\mathcal{C}'$ and
  $\mathcal{D}'$, and these restrictions clearly form an
  adjoint equivalence.
  The last claim is obvious since $\eta$ must be an isomorphism
  on all objects of $\mathcal{C}''$ and $\epsilon$ must be an
  isomorphism on all objects of $\mathcal{D}''$.
\end{proof}

In the following Definition~\ref{d:dualizing}
 we use the above
construction in the special case
that
$\mathcal{B}=\mathcal{A}^\opp$.

\begin{definition}
  \label{d:dualizing}
  Let $\mathcal{A}$ be a $\kk$-linear category and
  $\mathcal{T} \subset \D(\mathcal{A})$ a strictly full
  triangulated
  subcategory.
  A \define{dualizing object}
  (or \define{dualizing bimodule} or
  \define{dualizing complex of bimodules}) \define{for}
  $\mathcal{T}$ is a complex $\mathscr{D}$ of
  $\mathcal{A} \otimes
  \mathcal{A}^\opp$-modules such that
  every object of $\mathcal{T}$ is contained in
  the category $\D(\mathcal{A})^\mathscr{D}$ defined in
  \eqref{eq:DA^X}:
  This just means that
  the unit
  \begin{equation*}
    T \sira
    \Rd\Hom_{\mathcal{A}^\opp}(\Rd\Hom_\mathcal{A}(T,\mathscr{D}),
    \mathscr{D})
  \end{equation*}
  is an isomorphism in $\D(\mathcal{A})$
  for all objects $T \in \mathcal{T}$.
  Given a dualizing object $\mathscr{D}$ for $\mathcal{T}$ we
  denote the essential image of $\mathcal{T}$ under the functor
  \begin{equation*}
    \Rd\Hom_\mathcal{A}(-,\mathscr{D}) \colon
    \D(\mathcal{A}) \rightarrow
    \D(\mathcal{A}^\opp)^\opp
  \end{equation*}
  by $\mathcal{T}^\vee=\mathcal{T}^{\vee, \mathscr{D}}$ and call
  it the \define{dual of $\mathcal{T}$ (with respect to
  $\mathscr{D}$)}. Here we
  view $\mathcal{T}^\vee$ as a full subcategory of
  $\D(\mathcal{A}^\opp)$ (and not of its opposite category).
\end{definition}

\begin{remark}
  \label{r:dualizing-on-classical-generator}
  If, in the setting of Definition~\ref{d:dualizing},
  $\mathcal{T}$ is classically generated by an object $E$,
  then $\mathscr{D}$ is a dualizing object for $\mathcal{T}$ if
  and only if
  \begin{equation*}
    E \ra
    \Rd\Hom_{\mathcal{A}^\opp}(\Rd\Hom_\mathcal{A}(E,\mathscr{D}),
    \mathscr{D})
  \end{equation*}
  is an isomorphism. This follows immediately from
  Lemma~\ref{l:max-cats-adj-equiv} because
  $\D(\mathcal{A})^\mathscr{D}$ is a thick subcategory of
  $\D(\mathcal{A})$.
\end{remark}

\begin{remark}
  \label{r:dualizing-object-duality}
  Let $\mathcal{A}$ be a $\kk$-linear category and
  $\mathscr{D}$ a dualizing object for a strictly full
  triangulated subcategory $\mathcal{T} \subset
  \D(\mathcal{A})$ with dual $\mathcal{T}^\vee$.
  Then, by Lemma~\ref{l:max-cats-adj-equiv},
  the adjunction~\eqref{eq:Hom-to-X-adjunction} (for
  $X=\mathscr{D}$ and $\mathcal{B}=\mathcal{A}^\opp$)
  restricts to an
  adjoint equivalence
  \begin{equation*}
    \Rd\Hom_\mathcal{A}(-,\mathscr{D}) \colon
    \mathcal{T} \rightleftarrows
    (\mathcal{T}^\vee)^\opp \colon
    \Rd\Hom_{\mathcal{A}^\opp}(-,\mathscr{D}).
  \end{equation*}
\end{remark}

\begin{remark}
  \label{r:dualizing-object-from-duality}
  Let $\mathcal{A}$ be a $\kk$-linear category and let
  $\mathcal{T} \subset \D(\mathcal{A})$ and
  $\mathcal{S} \subset \D(\mathcal{A}^\opp)$ be strictly full
  triangulated subcategories.
  Let $\mathscr{D}$ be a complex of
  $\mathcal{A} \otimes
  \mathcal{A}^\opp$-modules
  such that
  the adjunction~\eqref{eq:Hom-to-X-adjunction} (for
  $X=\mathscr{D}$ and $\mathcal{B}=\mathcal{A}^\opp$)
  restricts to an adjoint equivalence
  \begin{equation*}
    \Rd\Hom_\mathcal{A}(-,\mathscr{D}) \colon
    \mathcal{T} \rightleftarrows
    \mathcal{S}^\opp \colon
    \Rd\Hom_{\mathcal{A}^\opp}(-,\mathscr{D}).
  \end{equation*}
  Then $\mathscr{D}$ is a dualizing object for $\mathcal{T}$ and
  $\mathcal{S}$ is the dual of $\mathcal{T}$, i.\,e.\
  $\mathcal{S}=\mathcal{T}^\vee$.
  This follows immediately from Lemma~\ref{l:max-cats-adj-equiv}.
\end{remark}

\begin{lemma}
  \label{l:HomB-to-h-inj-preserves-acyclics}
  Let $\mathcal{A}$ and $\mathcal{B}$ be $\kk$-linear categories.
  Let $X \in \C(\mathcal{A} \otimes \mathcal{B})$ be an
  h-injective complex. Then the functor
  \begin{equation*}
    \Hom_\mathcal{B}(-,X) \colon
    \C(\mathcal{B})
    \ra \C(\mathcal{A})
  \end{equation*}
  lands in the subcategory of h-injective complexes and preserves
  acyclic complexes.
\end{lemma}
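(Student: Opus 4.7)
The plan is to reduce both assertions to the h-injectivity of $X$ via the adjunction~\eqref{eq:otimes-HomB}, using the key fact that since $\kk$ is a field, the tensor product $\otimes_\kk$ is exact and therefore preserves acyclicity of complexes in either variable. I would first record the following Künneth-type observation: if one of $M \in \C(\mathcal{A})$ or $N \in \C(\mathcal{B})$ is acyclic, then $M \otimes N$ is acyclic in $\C(\mathcal{A} \otimes \mathcal{B})$, since acyclicity is tested objectwise and $(M \otimes N)(A,B)$ is the $\kk$-tensor product $M(A) \otimes_\kk N(B)$ of two complexes of $\kk$-vector spaces, where over a field Künneth gives no Tor contribution.

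For h-injectivity of $\Hom_\mathcal{B}(N,X)$, I would take an arbitrary acyclic $M \in \C(\mathcal{A})$ and show that $\ul\C_\mathcal{A}(M,\Hom_\mathcal{B}(N,X))$ is an acyclic complex of $\kk$-modules. By~\eqref{eq:otimes-HomB} this complex is isomorphic to $\ul\C_{\mathcal{A} \otimes \mathcal{B}}(M \otimes N, X)$; the first argument is acyclic by the previous paragraph, and the h-injectivity hypothesis on $X$ then gives acyclicity. Since $M$ was arbitrary acyclic, $\Hom_\mathcal{B}(N,X)$ is h-injective.

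For preservation of acyclicity, assume $N$ is acyclic and check that $\Hom_\mathcal{B}(N,X)(A)$ is acyclic for every object $A \in \mathcal{A}$. This complex is canonically isomorphic to $\ul\C_\mathcal{A}(\Yo(A),\Hom_\mathcal{B}(N,X))$, and~\eqref{eq:otimes-HomB} rewrites it as $\ul\C_{\mathcal{A} \otimes \mathcal{B}}(\Yo(A) \otimes N, X)$. Since $N$ is acyclic, the Künneth observation makes $\Yo(A) \otimes N$ acyclic as well, and h-injectivity of $X$ finishes the argument. No serious obstacle is anticipated; the only non-formal input is the exactness of $\otimes_\kk$, needed precisely so that acyclicity in one slot of the tensor product can be fed into the h-injectivity of $X$.
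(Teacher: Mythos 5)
Your proposal is correct and follows essentially the same route as the paper's own proof: both parts reduce via the adjunction~\eqref{eq:otimes-HomB} to acyclicity of a tensor product and then invoke h-injectivity of $X$, with the exactness of $\otimes_\kk$ over a field as the only non-formal input. The only cosmetic difference is that you isolate the K\"unneth observation as a preliminary step rather than invoking it inline.
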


\begin{proof}
  Let $N \in \C(\mathcal{B})$ be any object.
  We first show that $\Hom_\mathcal{B}(N,X)$
  is h-injective. Given any acyclic object $M \in
  \C(\mathcal{A})$
  we need to see that
  \begin{equation*}
    \ul\C_\mathcal{A}(M, \Hom_\mathcal{B}(N,X))
    \overset{\eqref{eq:otimes-HomB}}{\cong}
    \ul\C_{\mathcal{A} \otimes \mathcal{B}}(M \otimes N, X)
  \end{equation*}
  is acyclic. Since $X$ is h-injective it suffices to show that
  $M \otimes N$ is acyclic.
  But this is true since $\kk$ is a field.

  Now assume that $N$ is acyclic.
  We show that
  $\Hom_\mathcal{B}(N,X)$ is acyclic. Given an
  arbitrary object $A \in
  \mathcal{A}$ we need to see that
  \begin{equation*}
    \Hom_\mathcal{B}(N,X)(A)
    =
    \ul\C_\mathcal{A}(\Yo(A), \Hom_\mathcal{B}(N,X))
    \overset{\eqref{eq:otimes-HomB}}{\cong}
    \ul\C_{\mathcal{A} \otimes \mathcal{B}}(\Yo(A) \otimes N, X)
  \end{equation*}
  is acyclic. Since $X$ is h-injective it suffices to see that
  $\Yo(A) \otimes N$ is acyclic. But this is true since $N$ is
  acyclic and $\kk$ is a field.
\end{proof}

\subsection{Tensor product and dg homomorphisms}
\label{sec:tensor-product-dg}

We prove a key technical result which uses the notion of
pseudo-coherence.

\begin{proposition}
  \label{p:MI-otimes-NJ-qiso-MN-K}
  Let $\mathcal{A}$ and $\mathcal{B}$ be $\kk$-linear categories.
  Let $M \in \C(\mathcal{A})$ and $N \in \C(\mathcal{B})$.
  Let $I \in \C(\mathcal{A})$ and $J \in \C(\mathcal{B})$ be
  h-injective objects
  and
  let $\kappa \colon I \otimes J \ra K$ be a quasi-isomorphism to
  an h-injective object $K \in \C(\mathcal{A} \otimes \mathcal{B})$.
  Assume that one of the following two conditions is satisfied.
  \begin{enumerate}
  \item
    \label{enum:MNperfect}
    $M \in \D(\mathcal{A})_\pf$ and $N \in \D(\mathcal{B})_\pf$;
  \item
    \label{enum:pseudocohMNIJ}
    $M \in \D(\mathcal{A})_\pseudocoh$ and
    $N \in \D(\mathcal{B})_\pseudocoh$ and
    $I \in \D^+(\mathcal{A})$ and
    $J \in \D^+(\mathcal{B})$.
  \end{enumerate}
  Then the composition
  \begin{equation}
    \label{eq:47}
    \ul\C_\mathcal{A}(M,I) \otimes \ul\C_\mathcal{B}(N,J)
    \xra{\otimes}
    \ul\C_{\mathcal{A} \otimes \mathcal{B}}(M \otimes N, I \otimes J)
    \xra{\kappa_*}
    \ul\C_{\mathcal{A} \otimes \mathcal{B}}(M \otimes N, K)
  \end{equation}
  in $\C(\kk)$ is a quasi-isomorphism.
\end{proposition}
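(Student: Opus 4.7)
I would prove this by a three-step reduction: from the pseudo-coherent case~(b) to the perfect case~(a), from case~(a) to the case of representable $M$ and $N$, and then by direct computation in the representable case.

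For $M = \Yo(A)$ and $N = \Yo(B)$ we have $M \otimes N \cong \Yo(A, B)$ in $\mathcal{A} \otimes \mathcal{B}$, so under the canonical identifications $\ul\C_\mathcal{A}(\Yo(A), X) \cong X(A)$ the composite~\eqref{eq:47} becomes the evaluation of $\kappa$ at $(A, B)$, namely $I(A) \otimes J(B) \to K(A, B)$, which is a quasi-isomorphism since $\kappa$ is one and evaluation at an object is exact. Now, with $N, I, J, K$ fixed, both sides of~\eqref{eq:47} define contravariant triangulated functors in $M \in \D(\mathcal{A})$---using h-injectivity of $I$ and $K$, together with flatness over the field $\kk$---and \eqref{eq:47} is a natural transformation between them. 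Hence the full subcategory of $M$'s on which \eqref{eq:47} is a quasi-isomorphism is thick in $\D(\mathcal{A})$; containing the representables, it contains $\D(\mathcal{A})_\pf = \thick(\Yo \mathcal{A})$. The symmetric argument in $N$ settles case~(a).

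For case~(b), assume that $I, J$ are bounded-below complexes of injectives (say $I^i = J^i = 0$ for $i < a$) and that $M, N$ are bounded-above complexes of finitely generated free modules in degrees $\leq 0$. Fix a cohomological degree $n$. The degreewise split short exact sequence of brutal truncations $0 \to \sigma^{<-R}M \to M \to \sigma^{\geq -R}M \to 0$ induces, via~\eqref{eq:47} applied in the $M$-variable, a map of short exact sequences of complexes in $\C(\kk)$. A direct degree count shows that the LHS error $\ul\C_\mathcal{A}(\sigma^{<-R}M, I) \otimes \ul\C_\mathcal{B}(N, J)$ is concentrated (as a complex) in degrees $\geq 2a + R + 1$, since the first factor lives in degrees $\geq a + R + 1$ and the second in degrees $\geq a$. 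For the RHS error $\ul\C_{\mathcal{A}\otimes\mathcal{B}}(\sigma^{<-R}M \otimes N, K)$, the source $\sigma^{<-R}M \otimes N$ is a complex concentrated in degrees $\leq -R - 1$; choosing a bounded-below h-injective representative $K'$ of $K$ (which exists because $K \simeq I \otimes J$ has cohomology in degrees $\geq 2a$) and using the homotopy equivalence $\ul\C(-, K) \simeq \ul\C(-, K')$ between h-injectives, the RHS error is seen to be concentrated as a complex in degrees $\geq 2a + R + 1$ as well.

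Thus for $R \gg n$, both errors vanish in degrees $n$ and $n+1$, and the long exact cohomology sequences associated to the two rows identify the map on $H^n$ induced by~\eqref{eq:47} at $M$ with the map on $H^n$ induced at the perfect complex $\sigma^{\geq -R}M$. Performing the analogous reduction in the $N$-variable replaces $N$ by the perfect complex $\sigma^{\geq -R'}N$; case~(a) then concludes. The main obstacle is the RHS error estimate: since $K$ is merely h-injective and not assumed bounded as a complex, one must pass through a bounded-below h-injective replacement $K'$ to convert the cohomological lower bound of $K$ into a genuine degreewise bound on the Hom complex.
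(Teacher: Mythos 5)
Your proof is correct and follows essentially the same route as the paper's: reduce case (a) to representable $M$, $N$ (you via a thick-subcategory/triangulated-functor d\'evissage, the paper via explicit brutal truncation, shifts and summand extraction), and reduce case (b) to case (a) by exploiting bounded-belowness of $I$, $J$, $K$ to confine the range of $M$- and $N$-components that contribute in any fixed cohomological degree (you with explicit degree estimates and long exact sequences, the paper by asserting that ``only finitely many components matter''). One harmless slip: in your brutal-truncation short exact sequence the sub- and quotient-complex are swapped (with the usual convention $\sigma^{\geq -R}M$ is the subcomplex and $\sigma^{<-R}M$ the quotient), but since the underlying graded objects are the same this has no effect on your degree counting.
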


\begin{proof}
  Note that $\ul\C_\mathcal{C}(-,L)$
  preserves quasi-isomorphisms if $L \in \C(\mathcal{C})$ is an
  h-injective complex of modules over a $\kk$-linear category
  $\mathcal{C}$, that
  $(I' \otimes -) \colon \C(\mathcal{B}) \ra
  \C(\mathcal{A} \otimes \mathcal{B})$
  and
  $(- \otimes J') \colon \C(\mathcal{A}) \ra
  \C(\mathcal{A} \otimes \mathcal{B})$
  preserve quasi-isomorphisms, for $I' \in \C(\mathcal{A})$ and
  $J' \in
  \C(\mathcal{B})$, and that tensoring over $\kk$ preserves
  quasi-isomorphisms, since $\kk$ is a field.
  Hence we can replace $M$ and $N$ by isomorphic objects in
  $\D(\mathcal{A})$ and $\D(\mathcal{B})$, respectively.

  If \ref{enum:MNperfect} holds, $M$ and $N$ can be assumed to be
  bounded complexes of finitely generated projective
  modules. Using brutal truncation,
  shifts,
  and the fact that any finitely generated projective module is a
  direct summand of a finitely generated free module we reduce to
  the case that $M=\Yo(A)=\mathcal{A}(-,A)$ for some $A \in
  \mathcal{A}$ and
  $N=\Yo(B)=\mathcal{B}(-,B)$ for some $B \in \mathcal{B}$. But
  in this case
  \eqref{eq:47} is isomorphic to the quasi-isomorphism
  $I(A) \otimes J(B) = I(A) \otimes J(B) \ra K(A,B)$.

  Now assume that \ref{enum:pseudocohMNIJ} holds.
  Observe that $I$ and $J$ are homotopy equivalent to bounded
  below
  complexes of injective modules. Using this it is easy to see
  that we can assume that $I$, $J$ and $K$
  are bounded below complexes of injective modules.
  We can also assume that $M$ and $N$ are bounded above complexes
  of finitely generated free modules since $M$ and $N$ are
  pseudocoherent.

  When checking that the composition in
  \eqref{eq:47} induces an
  isomorphism on the $n$-th cohomology,
  for a fixed $n \in \bZ$, only finitely many components
  of $M$ and $N$ matter. Hence we can assume that $M$ and
  $N$ are
  bounded complexes of finitely generated free modules. Then $M$
  and $N$ are
  perfect
  and we can use \ref{enum:MNperfect}.
\end{proof}

\subsection{A sufficient condition for smoothness}
\label{sec:suff-cond-smoothn}

We now state the main theorem of this article.

\begin{theorem}
  \label{t:sufficient-for-smoothness}
  Let $\mathcal{A}$ be a $\kk$-linear category where $\kk$ is a
  field.
  Then a strictly full
  triangulated subcategory
  $\mathcal{T} \subset \D(\mathcal{A})$
  is smooth over $\kk$ if there is a dualizing object
  $\mathscr{D} \in \D(\mathcal{A} \otimes \mathcal{A}^\opp)$
  for $\mathcal{T}$
  such that the following two conditions are satisfied:
  \begin{enumerate}[label=(A\arabic*)]
  \item
    \label{enum:T-class-gen}
    $\mathcal{T}$ is classically generated by an object of
    $\D^\bd(\mathcal{A})_\pseudocoh$ whose
    image under
    the equivalence
    \begin{equation*}
      \Rd\Hom_\mathcal{A}(-,\mathscr{D}) \colon
      \mathcal{T} \sira
      (\mathcal{T}^\vee)^\opp
    \end{equation*}
    from
    Remark~\ref{r:dualizing-object-duality}
    is in
    $\D^\bd(\mathcal{A}^\opp)_\pseudocoh$;
  \item
    \label{enum:dualizing-object-from-T-Tvee}
    $\mathscr{D}$ is contained in the thick subcategory
    of $\D(\mathcal{A} \otimes \mathcal{A}^\opp)$
    generated
    by the essential image of the
    functor
    \begin{equation*}
      \otimes \colon \mathcal{T} \times
      \mathcal{T}^\vee
      \ra
      \D(\mathcal{A} \otimes \mathcal{A}^\opp).
    \end{equation*}
  \end{enumerate}
\end{theorem}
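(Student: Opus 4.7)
The strategy is to apply Remark~\ref{r:smoothness-dg-endos-classical-generator}: taking an h-projective resolution $P$ of the classical generator $E$ from (A1), we have $P \in \D^\bd(\mathcal{A})_\pseudocoh$ with $\thick(P) = \mathcal{T}$, and it suffices to show that the dg algebra $B := \ul\C_\mathcal{A}(P, P)$ is $\kk$-smooth, i.\,e.\ that the diagonal dg bimodule lies in $\per(B^e)$ where $B^e := B \otimes_\kk B^\opp$. Fix an h-injective resolution $\tilde{\mathscr{D}}$ of $\mathscr{D}$ in $\C(\mathcal{A} \otimes \mathcal{A}^\opp)$ and set $P^\vee := \Hom_\mathcal{A}(P, \tilde{\mathscr{D}})$; by Lemma~\ref{l:HomB-to-h-inj-preserves-acyclics} $P^\vee$ is h-injective in $\C(\mathcal{A}^\opp)$ and represents $\Rd\Hom_\mathcal{A}(E, \mathscr{D})$ in the derived category, so by (A1) it lies in $\D^\bd(\mathcal{A}^\opp)_\pseudocoh$ and, via the duality equivalence of Remark~\ref{r:dualizing-object-duality}, is a classical generator of $\mathcal{T}^\vee$. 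The left $B$-action on $P$ induces a right $B$-action on $P^\vee$ by precomposition, making $P \otimes_\kk P^\vee$ a $(B,B)$-bimodule with commuting $(\mathcal{A} \otimes \mathcal{A}^\opp)$-structure.

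The first key step is the identification $B \simeq \Rd\Hom_{\mathcal{A} \otimes \mathcal{A}^\opp}(P \otimes P^\vee, \mathscr{D})$ in $\D(B^e)$. Since $\mathscr{D}$ is dualizing on $\mathcal{T}$, the canonical biduality map $P \ra P^{\vee\vee} := \Hom_{\mathcal{A}^\opp}(P^\vee, \tilde{\mathscr{D}})$ is a quasi-isomorphism, and its target is h-injective by another application of Lemma~\ref{l:HomB-to-h-inj-preserves-acyclics}. Applying $\ul\C_\mathcal{A}(P, -)$ and invoking the tensor-$\Hom$ adjunction~\eqref{eq:otimes-HomB} yields maps of $B$-bimodules
\begin{equation*}
  B = \ul\C_\mathcal{A}(P, P) \xra{\sim} \ul\C_\mathcal{A}(P, P^{\vee\vee}) \cong \ul\C_{\mathcal{A} \otimes \mathcal{A}^\opp}(P \otimes P^\vee, \tilde{\mathscr{D}}),
\end{equation*}
whose composition is a quasi-isomorphism giving the desired identification.

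The second key step uses (A2). Since $\mathcal{T} = \thick(P)$ and $\mathcal{T}^\vee = \thick(P^\vee)$, a routine induction (using that $\otimes_\kk$ is exact and commutes with shifts, cones, and direct summands) shows that the thick subcategory of $\D(\mathcal{A} \otimes \mathcal{A}^\opp)$ generated by the essential image of $\otimes \colon \mathcal{T} \times \mathcal{T}^\vee \ra \D(\mathcal{A} \otimes \mathcal{A}^\opp)$ coincides with $\thick(P \otimes P^\vee)$. Hypothesis (A2) therefore gives $\mathscr{D} \in \thick(P \otimes P^\vee)$. Applying the triangulated functor $\Rd\Hom_{\mathcal{A} \otimes \mathcal{A}^\opp}(P \otimes P^\vee, -) \colon \D(\mathcal{A} \otimes \mathcal{A}^\opp) \ra \D(B^e)$ and combining with the first step yields $B \in \thick_{\D(B^e)}(R)$ for $R := \Rd\Hom_{\mathcal{A} \otimes \mathcal{A}^\opp}(P \otimes P^\vee, P \otimes P^\vee)$. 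Now invoke Proposition~\ref{p:MI-otimes-NJ-qiso-MN-K}\ref{enum:pseudocohMNIJ} with $(M, N) = (P, P^\vee)$ and $(I, J) = (I, P^\vee)$, where $I$ is an h-injective resolution of $P$ with $I \in \D^\bd(\mathcal{A}) \subset \D^+(\mathcal{A})$: the hypotheses are met since $P, P^\vee$ are pseudo-coherent by (A1) and $I, P^\vee$ are h-injective with bounded-below cohomology. This provides a $B^e$-equivariant quasi-isomorphism $\ul\C_\mathcal{A}(P, I) \otimes_\kk \ul\C_{\mathcal{A}^\opp}(P^\vee, P^\vee) \xra{\sim} R$ whose left-hand side is quasi-isomorphic to $B \otimes_\kk B^\opp = B^e$ as a $B^e$-module, the factor $\ul\C_{\mathcal{A}^\opp}(P^\vee, P^\vee)$ being identified with $B^\opp$ via the duality equivalence $\mathcal{T}^\opp \simeq \mathcal{T}^\vee$. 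Hence $R$ is free of rank one over $B^e$, so $B \in \thick_{\D(B^e)}(B^e) = \per(B^e)$, and $B$ is smooth.

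The main technical burden is keeping precise track of all the bimodule structures: each of the natural quasi-isomorphisms above must be shown to be not merely a map of $\kk$-complexes but $B^e$-equivariant, and in particular the identification $\ul\C_{\mathcal{A}^\opp}(P^\vee, P^\vee) \simeq B^\opp$ must intertwine the correct $B$-bimodule actions on $R$ and on $B^e$. These compatibilities all follow from naturality once the models for $P$, $P^\vee$, and $\tilde{\mathscr{D}}$ are chosen coherently, but their verification is where carelessness about sides of actions could derail the argument; everything else is formal.
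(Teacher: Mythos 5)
Your proposal is correct and follows essentially the same route as the paper: reduce to smoothness of $B=\ul\C_\mathcal{A}(P,P)$, identify $B$ with $\Rd\Hom_{\mathcal{A}\otimes\mathcal{A}^\opp}(P\otimes P^\vee,\mathscr{D})$ via biduality and adjunction, use (A2) to deduce $\mathscr{D}\in\thick(P\otimes P^\vee)$, and then use Proposition~\ref{p:MI-otimes-NJ-qiso-MN-K}\ref{enum:pseudocohMNIJ} to show $B^e\to\Rd\Hom_{\mathcal{A}\otimes\mathcal{A}^\opp}(P\otimes P^\vee,P\otimes P^\vee)$ is a quasi-isomorphism. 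Where you apply the triangulated functor $\Rd\Hom_{\mathcal{A}\otimes\mathcal{A}^\opp}(P\otimes P^\vee,-)\colon\D(\mathcal{A}\otimes\mathcal{A}^\opp)\to\D(B^e)$ directly, the paper instead invokes \cite[Lemma~B.1.(a)]{valery-olaf-new-enhancements} to obtain an equivalence $[\ul\hInj_{\thick(P\otimes P^\vee)}(\mathcal{A}\otimes\mathcal{A}^\opp)]\sira\per(B^e)$; that lemma is exactly what makes rigorous the functoriality and the $B^e$-module bookkeeping that you flag at the end as ``the main technical burden.'' Two minor remarks: the paper first replaces $\mathcal{T}$ by $\thick(\mathcal{T})$ so that $\thick(P)=\mathcal{T}$ on the nose (you implicitly assert $\mathcal{T}=\thick(P)$ and $\mathcal{T}^\vee=\thick(P^\vee)$, which literally requires Karoubianness, though your conclusion that the thick subcategory generated by the image of $\otimes$ equals $\thick(P\otimes P^\vee)$ holds regardless); and your identification $\ul\C_{\mathcal{A}^\opp}(P^\vee,P^\vee)\simeq B^\opp$ is the content of the paper's intermediate ``Claim,'' proved there using that $P$ is h-projective and $P^\vee$ is h-injective.
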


\begin{remark}
  \label{r:on-t:sufficient-for-smoothness}
  Condition~\ref{enum:T-class-gen} clearly implies $\mathcal{T}
  \subset
  \D^\bd(\mathcal{A})_\pseudocoh$ and $\mathcal{T}^\vee \subset
  \D^\bd(\mathcal{A}^\opp)_\pseudocoh$ because the image of a
  classical generator of $\mathcal{T}$ under the equivalence is a
  classical generator of
  $\mathcal{T}^\vee$.

  If $E$ is a classical generator of $\mathcal{T}$ and
  $F$ is a classical generator of $\mathcal{T}^\vee$, then
  condition~\ref{enum:dualizing-object-from-T-Tvee}
  is clearly equivalent to:
  \begin{enumerate}[label=(A\arabic*)', start=2]
  \item
    \label{enum:dualizing-object-from-class-gen-T-Tvee}
    $\mathscr{D}$ is contained in the thick subcategory
    of $\D(\mathcal{A} \otimes \mathcal{A}^\opp)$
    generated by $E \otimes F$.
  \end{enumerate}
\end{remark}

\begin{proof}
  We first reduce to the case that $\mathcal{T}$ is Karoubian.
  Obviously,
  $\mathscr{D}$ is also a dualizing object for
  $\thick(\mathcal{T})=\thick_{\D(\mathcal{A})}(\mathcal{T})$,
  and the 
  assumptions~\ref{enum:T-class-gen}
  and \ref{enum:dualizing-object-from-T-Tvee}
  are also satisfied by the pair
  $(\thick(\mathcal{T}), \mathscr{D})$.
  Moreover, a classical generator of $\mathcal{T}$ is
  certainly a classical generator of
  $\thick(\mathcal{T})$, so
  $\mathcal{T}$ is smooth over $\kk$ if and only if
  $\thick(\mathcal{T})$ is smooth over $\kk$,
  by Remark~\ref{r:smoothness-dg-endos-classical-generator}.

  Hence, by
  replacing $\mathcal{T}$ by $\thick(\mathcal{T})$
  we can and
  will assume
  in the following that $\mathcal{T}$ is Karoubian.
  Without loss of generality we assume that
  $\mathscr{D}$ is an h-injective complex of $\mathcal{A} \otimes
  \mathcal{A}^\opp$-modules.

  Consider the adjunction
  \begin{equation*}
    (-)^\vee:=\Hom_{\mathcal{A}}(-,\mathscr{D})
    \colon
    \ul\C(\mathcal{A}) \rightleftarrows
    \ul\C(\mathcal{A}^\opp)^\opp
    \colon (-)^\vee:=\Hom_{\mathcal{A}^\opp}(-,\mathscr{D})
  \end{equation*}
  of dg functors.
  Both functors preserve quasi-isomorphisms,
  by Lemma~\ref{l:HomB-to-h-inj-preserves-acyclics},
  and therefore our adjunction descends straightforwardly
  to an adjunction
  \begin{equation*}
    (-)^\vee=\Hom_{\mathcal{A}}(-,\mathscr{D})
    \colon
    \D(\mathcal{A}) \rightleftarrows
    \D(\mathcal{A}^\opp)^\opp
    \colon
    (-)^\vee=\Hom_{\mathcal{A}^\opp}(-,\mathscr{D})
  \end{equation*}
  of triangulated functors. This is (up to unique isomorphism) the
  adjunction~\eqref{eq:Hom-to-X-adjunction} (for
  $\mathcal{B}=\mathcal{A}^\opp$ and $X=\mathscr{D}$ there).

  Since $\mathscr{D}$ is a dualizing object for $\mathcal{T}$,
  this adjunction restricts
  by Remark~\ref{r:dualizing-object-duality}
  to an adjoint equivalence
  \begin{equation}
    \label{eq:T-Tvee-equi}
    (-)^\vee=\Hom_{\mathcal{A}}(-,\mathscr{D})
    \colon
    \mathcal{T}
    \rightleftarrows
    (\mathcal{T}^\vee)^\opp
    \colon
    (-)^\vee=\Hom_{\mathcal{A}^\opp}(-,\mathscr{D}).
  \end{equation}

  \textbf{Claim:}
  For any h-projective object $Q \in \mathcal{T}$
  and any $R \in \mathcal{T}$ the morphism
  \begin{equation*}
    \ul\C_\mathcal{A}(Q,R)
    \xra{(-)^\vee}
    \ul\C_{\mathcal{A}^\opp}^\opp(Q^\vee,R^\vee)=
    \ul\C_{\mathcal{A}^\opp}(R^\vee,Q^\vee)
  \end{equation*}
  is a quasi-isomorphism.

  Indeed, the induced map
  on the $n$-th
  cohomology is given by
  \begin{equation*}
    [\ul\C_\mathcal{A}](Q,[n]R)
    \xra{(-)^\vee}
    [\ul\C_{\mathcal{A}^\opp}](([n]R)^\vee,Q^\vee).
  \end{equation*}
  Since $Q$ is h-projective and
  $Q^\vee=\Hom_\mathcal{A}(Q,\mathscr{D})$ is h-injective,
  by Lemma~\ref{l:HomB-to-h-inj-preserves-acyclics},
  this map
  is identified with the map
  \begin{equation*}
    \D_\mathcal{A}(Q,[n]R)
    \xra{(-)^\vee}
    \D_{\mathcal{A}^\opp}(([n]R)^\vee,Q^\vee)
  \end{equation*}
  which is an isomorphism by the
  equivalence~\eqref{eq:T-Tvee-equi}.
  This proves the claim.

  Let $P \in \D^\bd(\mathcal{A})_\pseudocoh$
  be a classical generator
  of $\mathcal{T}$ such that $P^\vee \in
  \D^\bd(\mathcal{A}^\opp)_\pseudocoh$; such an
  object exists by assumption~\ref{enum:T-class-gen}.
  Additionally, we can assume that $P$ is h-projective.
  By Remark~\ref{r:smoothness-dg-endos-classical-generator} we
  need to show $\kk$-smoothness of the dg algebra
  \begin{equation*}
    \mathcal{E}:=\ul\C_\mathcal{A}(P,P).
  \end{equation*}

  The above claim shows that
  \begin{equation}
    \label{eq:E-Evee-qiso}
    \mathcal{E}=\ul\C_\mathcal{A}(P,P)
    \xra{(-)^\vee}
    \ul\C_{\mathcal{A}^\opp}^\opp(P^\vee,P^\vee)
  \end{equation}
  is a quasi-isomorphism of dg algebras.

  Note that the obvious map
  \begin{equation*}
    P \ra (P^\vee)^\vee
  \end{equation*}
  is a quasi-isomorphism in $\C(\mathcal{A})$ because it becomes
  an isomorphism in $\D(\mathcal{A})$ by the assumption that
  $\mathscr{D}$ is a dualizing object.
  If we apply $\ul\C(P,-)$ to this quasi-isomorphism
  we obtain a quasi-isomorphism
  \begin{equation}
    \label{eq:E-(P,Pveevee)}
    \mathcal{E}=\ul\C_\mathcal{A}(P,P)
    \ra
    \ul\C_\mathcal{A}(P, (P^\vee)^\vee)
  \end{equation}
  of dg $\mathcal{E}$-modules because $P$ is h-projective.
  Here $\mathcal{E}$ acts from the right.
  There is also a natural left action of the dg algebra
  $\mathcal{E}$ on
  $\mathcal{E}$ and on
  $\ul\C_\mathcal{A}(P, (P^\vee)^\vee)$, the action on the latter
  coming from
  the morphism
  \begin{equation*}
    \mathcal{E}=\ul\C(P,P) \xra{((-)^\vee)^\vee}
    \ul\C((P^\vee)^\vee,(P^\vee)^\vee)
  \end{equation*}
  of dg algebras. It is easy to check that
  \eqref{eq:E-(P,Pveevee)} is compatible with these actions and
  hence a quasi-isomorphism of dg $\mathcal{E} \otimes
  \mathcal{E}^\opp$-modules.

  Let $P \ra I$ be a quasi-isomorphism to an h-injective object
  $I \in \C(\mathcal{A})$.
  Let $\kappa \colon I \otimes P^\vee \ra K$ be
  a quasi-isomorphism to an h-injective object
  $K \in \C(\mathcal{A} \otimes \mathcal{A}^\opp)$.
  Then
  \begin{equation*}
    \lambda \colon P \otimes P^\vee \ra I \otimes P^\vee \xra{\kappa} K
  \end{equation*}
  is a quasi-isomorphism because $\kk$ is a field.
  Now consider the following commutative diagram with obvious
  maps
  where $\beta$ is defined so that the triangle containing
  $\beta$ is commutative.
  \begin{equation*}
    \xymatrix{
      {\mathcal{E} \otimes \mathcal{E}^\opp}
      \ar[d]_-{\id \otimes (-)^\vee}
      \ar[rd]^-\beta
      \\
      {\ul\C_\mathcal{A}(P,P) \otimes
        \ul\C_{\mathcal{A}^\opp}(P^\vee, P^\vee)}
      \ar[r]^-{\otimes}
      \ar[d]
      &
      {\ul\C_{\mathcal{A} \otimes \mathcal{A}^\opp}(P \otimes
        P^\vee, P \otimes P^\vee)}
      \ar[r]^-{\lambda_*}
      &
      {\ul\C_{\mathcal{A} \otimes \mathcal{A}^\opp}(P \otimes
        P^\vee, K)}
      \ar@{=}[d]
      \\
      {\ul\C_\mathcal{A}(P,I) \otimes
        \ul\C_{\mathcal{A}^\opp}(P^\vee, P^\vee)}
      \ar[r]^-{\otimes}
      &
      {\ul\C_{\mathcal{A} \otimes \mathcal{A}^\opp}(P \otimes
        P^\vee, I \otimes P^\vee)}
      \ar[r]^-{\kappa_*}
      &
      {\ul\C_{\mathcal{A} \otimes \mathcal{A}^\opp}(P \otimes
        P^\vee, K)}
    }
  \end{equation*}
  Its vertical arrows are quasi-isomorphisms: this uses the
  quasi-isomorphism
  \eqref{eq:E-Evee-qiso}, the fact that $P$ is h-projective,
  and the fact
  that $\kk$ is a field.
  The composition of the lower row is a quasi-isomorphism by
  Proposition~\ref{p:MI-otimes-NJ-qiso-MN-K}.\ref{enum:pseudocohMNIJ};
  we use here that $I$ and $P^\vee$ are h-injective
  (by Lemma~\ref{l:HomB-to-h-inj-preserves-acyclics}) and that
  $I \sila P \in \D^\bd(\mathcal{A})_\pseudocoh$
  and $P^\vee \in
  \D^\bd(\mathcal{A})_\pseudocoh$
  by assumption~\ref{enum:T-class-gen}. Hence the composition
  $\lambda_* \circ \beta$ is
  a quasi-isomorphism.

  We now apply
  \cite[Lemma~B.1.(a)]{valery-olaf-new-enhancements}
  to the dg category $\ul\C(\mathcal{A} \otimes
  \mathcal{A}^\opp)$, its full pretriangulated dg subcategory
  $\ul\hInj_{\mathcal{S}}(\mathcal{A} \otimes
  \mathcal{A}^\opp)$ where $\mathcal{S}=\thick(P \otimes
  P^\vee)$, the quasi-isomorphism $\lambda \colon P \otimes
  P^\vee \ra K$ in
  $\C(\mathcal{A}
  \otimes \mathcal{A}^\opp)$, and the morphism $\beta$ of dg
  algebras from the
  above diagram. This yields the equivalence
  \begin{equation*}
    \res_{\mathcal{E} \otimes
      \mathcal{E}^\opp}^{\ul\C_{\mathcal{A} \otimes
        \mathcal{A}^\opp}(P \otimes P^\vee, P \otimes P^\vee)}
    \circ \ul\C_{\mathcal{A} \otimes
      \mathcal{A}^\opp}(P \otimes P^\vee, -) \colon
    [\ul\hInj_{\mathcal{S}}(\mathcal{A} \otimes
    \mathcal{A}^\opp)]
    \sira
    \per(\mathcal{E} \otimes \mathcal{E}^\opp)
  \end{equation*}
  of triangulated categories.

  By assumption~\ref{enum:dualizing-object-from-T-Tvee},
  the object $\mathscr{D}$ is in $\mathcal{S}$. We have
  assumed that $\mathscr{D}$ is h-injective, so
  $\mathscr{D} \in \ul\hInj_{\mathcal{S}}(\mathcal{A} \otimes
  \mathcal{A}^\opp)$.
  Hence
  \begin{equation*}
    \ul\C_{\mathcal{A} \otimes
      \mathcal{A}^\opp}(P \otimes P^\vee, \mathscr{D})
    \in
    \per(\mathcal{E} \otimes \mathcal{E}^\opp)
  \end{equation*}
  by the above equivalence.
  The isomorphisms
  \begin{align*}
    \ul\C_{\mathcal{A} \otimes
    \mathcal{A}^\opp}(P \otimes P^\vee, \mathscr{D})
    & =
      \ul\C_{\mathcal{A} \otimes
      \mathcal{A}^\opp}(P \otimes \Hom_{\mathcal{A}}(P,
      \mathscr{D}), \mathscr{D})
    \\
    & \cong
      \ul\C_\mathcal{A}(P, \Hom_{\mathcal{A}^\opp}(\Hom_{\mathcal{A}}(P,
      \mathscr{D}), \mathscr{D}))
    & \text{(by \eqref{eq:otimes-HomB})}
    \\
    & =
      \ul\C_\mathcal{A}(P, (P^\vee)^\vee)
  \end{align*}
  of dg $\mathcal{E} \otimes \mathcal{E}^\opp$-modules and the
  quasi-isomorphism
  \begin{equation*}
    \mathcal{E}=\ul\C_\mathcal{A}(P,P)
    \xra{\eqref{eq:E-(P,Pveevee)}}
    \ul\C_\mathcal{A}(P, (P^\vee)^\vee)
  \end{equation*}
  of dg $\mathcal{E} \otimes \mathcal{E}^\opp$-modules then show
  $\mathcal{E} \in \per(\mathcal{E} \otimes \mathcal{E}^\opp)$,
  i.\,e.\ $\mathcal{E}$ is smooth over $\kk$.
\end{proof}

\section{Smoothness for finite dimensional algebras}
\label{sec:smoothn-finite-dimen}

We denote the Jacobson radical of a ring $R$ by $\rad(R)$.
Recall that a ring is semisimple if and only if it is Artinian and its
Jacobson radical is zero (see e.\,g.\
\cite[Thm.~2.2]{farb-dennis-NC-algebra}). In particular, for a
finite-dimensional algebra $A$ over a field $\frac{A}{\rad(A)}$ is
semisimple.

Given a finite-dimensional $\kk$-algebra $A$ remember that
$\mod(A)$ is the abelian category of finite-dimensional
$A$-modules and that $\D^\bd(\mod(A))$ is the full subcategory of
its derived category $\D(\mod(A))$ of objects with bounded
cohomology.

\begin{lemma}
  \label{l:class-gen-Dbmod-fin-dim-alg}
  Let $A$ be a finite-dimensional algebra over a field $\kk$.
  Then $\D^\bd(\mod(A))$ has a classical generator, for example
  the direct sum of representatives of the set of simple
  $A$-modules up to isomorphism, or $\frac{A}{\rad(A)}$.
\end{lemma}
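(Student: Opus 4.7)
The plan is to show that $\thick(S) = \D^\bd(\mod(A))$, where $S$ is a direct sum of representatives of the (necessarily finitely many) isomorphism classes of simple $A$-modules; the statement for $A/\rad(A)$ will then follow from the observation that $A/\rad(A)$, viewed as a right $A$-module, is semisimple and each simple appears as a summand, so $\thick(A/\rad(A)) = \thick(S)$.

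First I would reduce to showing that every object of $\mod(A)$ itself lies in $\thick(S)$. For this, let $M \in \mod(A)$ be finite-dimensional. The Jacobson radical $\rad(A)$ is nilpotent (since $A$ is finite-dimensional, hence Artinian), so the filtration
\begin{equation*}
  M \supset M\rad(A) \supset M\rad(A)^2 \supset \cdots \supset M\rad(A)^n = 0
\end{equation*}
is finite, and each subquotient $M\rad(A)^i / M\rad(A)^{i+1}$ is a finite-dimensional $A/\rad(A)$-module, hence semisimple, hence a finite direct sum of copies of simples, hence a summand of a finite direct sum of copies of $S$. Using the short exact sequences coming from this filtration, an easy induction on $n$ inside the triangulated category $\D^\bd(\mod(A))$ shows $M \in \thick(S)$.

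Second, I would bootstrap from modules to bounded complexes by induction on the cohomological amplitude. If $X \in \D^\bd(\mod(A))$ has cohomology concentrated in a single degree, then $X$ is isomorphic to a shift of a module in $\mod(A)$ and we are done by the previous paragraph. Otherwise, a standard truncation triangle $\tau^{\leq n} X \to X \to \tau^{> n} X \to [1]\tau^{\leq n} X$ expresses $X$ as an extension of complexes of strictly smaller amplitude, both still in $\D^\bd(\mod(A))$, and induction gives $X \in \thick(S)$.

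I do not expect any real obstacle: the argument is just the standard ``devissage by the radical filtration, then by cohomological truncation''. The only point to note carefully is finiteness of the set of isomorphism classes of simple modules (so that $S$ genuinely lies in $\mod(A)$), which follows from the fact that every simple $A$-module is a quotient of the finite-dimensional semisimple algebra $A/\rad(A)$.
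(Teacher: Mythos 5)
Your proof is correct and takes essentially the same approach as the paper: devissage of a bounded complex to its finitely many cohomology modules by truncation, then devissage of a module by a finite filtration with semisimple (resp.\ simple) subquotients, plus the observation that only finitely many simples exist. The paper uses a composition series where you use the radical filtration, but these amount to the same thing.
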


\begin{proof}
  Any object of $\D^\bd(\mod(A))$ is built from its finitely many
  non-zero cohomology modules, and each such module has a
  finite filtration with simple subquotients (a composition
  series).
  Since each simple $A$-module appears in such a
  composition series of $A$, there are, up to isomorphism, only
  finitely many
  simple $A$-modules, say $S_1, \dots, S_r$. Then
  $\bigoplus_{i=1}^r S_i$ is a classical generator of
  $\D^\bd(\mod(A))$.
  Since $\frac{A}{\rad(A)}$ is a semisimple $A$-module which contains
  each $S_i$ with positive multiplicity, $\frac{A}{\rad(A)}$ is a
  classical generator as well.
\end{proof}

Recall the Artin--Wedderburn theorem saying that every semisimple
ring $A$ is isomorphic to a finite
product $\prod_{i=1}^r \M_{n_i}(D_i)$
of matrix rings
over division rings $D_i$, for
suitable $n_i \in \bN_{>0}$. In particular, the
center $\Z(A)$ of $A$ is then isomorphic to the product $\prod_{i=1}^r
Z(D_i)$ of fields.
If $A$ is an algebra over the field $\kk$, we get
field extensions $\kk \hra Z(D_i)$; these field extensions are unique up to
isomorphism and order.

\begin{definition}
  [{\cite[Def.\ on page 89]{farb-dennis-NC-algebra}}]
  \label{d:sep-alg-over-field}
  Let $\kk$ be a field. A $\kk$-algebra $A$ is
  \define{separable (over $\kk$)}
  if and only if
  $A$ is a finite-dimensional semisimple $\kk$-algebra such
  that
  each field extension $\kk
  \hra Z(D_i)$ is separable
  if we fix an isomorphism $A \cong \prod_{i=1}^r \M_{n_i}(D_i)$
  as above.
\end{definition}

\begin{remark}
  In particular, if $\kk$ is perfect, than a
  $\kk$-algebra is separable if and only if it is
  finite-dimensional and semisimple.
\end{remark}

\begin{remark}
  \label{r:sep-alg-over-field}
  There is a general definition of a \textit{separable algebra over a
  commutative ring}, see
  \cite{auslander-goldman-brauer} or
  \cite[Ch.~III]{knus-ojanguren-descente-azumaya}.
  For algebras over a field this general definition is
  equivalent to
  Definition~\ref{d:sep-alg-over-field}, by
  \cite[Thm.~III.3.1]{knus-ojanguren-descente-azumaya}.
  Note also that
  Definition~\ref{d:sep-alg-over-field} generalizes the
  usual definition of a separable field extension.
\end{remark}

\begin{proposition}
  \label{p:otimes-rad-separable-alg}
  Let $A$ and $B$ be finite-dimensional $\kk$-algebras.
  Assume that at least one of $\frac{A}{\rad(A)}$
  and $\frac{B}{\rad(B)}$ is a separable
  $\kk$-algebra. Then
  $\frac{A}{\rad(A)} \otimes_\kk \frac{B}{\rad(B)}$
  is a semisimple $\kk$-algebra
  and
  \begin{equation}
    \label{eq:rad-otimes}
    \frac{A}{\rad(A)} \otimes_\kk \frac{B}{\rad(B)}
    =
    \frac{A \otimes_\kk B}{\rad (A \otimes_\kk B)}
  \end{equation}
  canonically. The displayed $\kk$-algebra is separable if both
  $\frac{A}{\rad(A)}$ and
  $\frac{B}{\rad(B)}$ are separable over $\kk$.
\end{proposition}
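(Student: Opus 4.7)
The plan is to split the proposition into two independent pieces: first, establish that $S := \frac{A}{\rad A} \otimes_\kk \frac{B}{\rad B}$ is semisimple (and, under the stronger hypothesis, separable); second, identify the natural surjection $A \otimes_\kk B \twoheadrightarrow S$ as the projection modulo the Jacobson radical.

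For the first piece, the crucial input is the classical fact that the tensor product of a separable $\kk$-algebra with a semisimple $\kk$-algebra is again semisimple, and that the tensor product of two separable $\kk$-algebras is separable. I would invoke this from \cite{farb-dennis-NC-algebra} (or derive it by Artin--Wedderburn plus Morita reduction: decompose both factors into matrix algebras over division rings, reduce to $D \otimes_\kk E$, split as $(D \otimes_\kk Z(E)) \otimes_{Z(E)} E$, and use that separability of $D/\kk$ persists after scalar extension to the field $Z(E)$, while tensoring over $Z(E)$ with the central simple $Z(E)$-algebra $E$ preserves semisimplicity). Applying this with $S$ the separable factor and $R$ the other, we obtain that $\frac{A}{\rad A} \otimes_\kk \frac{B}{\rad B}$ is semisimple, and separable when both factors are.

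For the second piece, let $I := \rad(A) \otimes_\kk B + A \otimes_\kk \rad(B)$, a two-sided ideal of $A \otimes_\kk B$. Right-exactness of $\otimes_\kk$ identifies $I$ with the kernel of the canonical surjection $A \otimes_\kk B \twoheadrightarrow \frac{A}{\rad A} \otimes_\kk \frac{B}{\rad B}$. I claim $I$ is nilpotent: pick $n, m$ with $\rad(A)^n = 0$ and $\rad(B)^m = 0$ (possible because $A$ and $B$ are finite-dimensional). Expanding $I^{n+m}$, every summand is a product of $n + m$ elementary tensors, each of type $r \otimes b$ ($r \in \rad A$) or $a \otimes s$ ($s \in \rad B$). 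Using that $\rad(A)$ and $\rad(B)$ are two-sided ideals, the left tensor component of such a product with $p$ factors of the first type lies in $\rad(A)^p$, and the right component with $q$ factors of the second type lies in $\rad(B)^q$. Since $p + q = n + m$ forces $p \geq n$ or $q \geq m$, each summand vanishes, so $I^{n+m} = 0$.

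Combining: the quotient $(A \otimes_\kk B)/I$ is semisimple, and $I$ is a nilpotent ideal. Since in a finite-dimensional $\kk$-algebra the Jacobson radical is the smallest ideal with semisimple quotient (equivalently, the largest nilpotent ideal), it follows that $I = \rad(A \otimes_\kk B)$, which is the asserted equality \eqref{eq:rad-otimes}. The final claim about separability then follows from the analogous closure property cited in the first step. The main obstacle is the semisimplicity/separability input of the first piece; the ideal-theoretic manipulations of the second piece are elementary once one knows that a nilpotent ideal with semisimple quotient is automatically the radical.
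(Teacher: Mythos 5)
Your proof is correct and follows essentially the same route as the paper's: invoke the classical closure properties (tensor product of a separable $\kk$-algebra with a semisimple one is semisimple, and of two separables is separable) for the first claim, identify $I = \rad(A)\otimes_\kk B + A\otimes_\kk\rad(B)$ as the kernel of the canonical surjection, show $I$ is nilpotent, and conclude $I=\rad(A\otimes_\kk B)$ from the characterization of the radical of an Artinian algebra as the unique nilpotent ideal with semisimple quotient (the paper cites \cite[Prop.~I.3.3]{ARS-rep-artin-algebras} for exactly this). The only cosmetic difference is in how the nilpotency of $I$ is phrased: the paper records the identity $I^n = \sum_{i=0}^n \rad(A)^i\otimes_\kk\rad(B)^{n-i}$ and lets nilpotency of the factors finish it, whereas you expand $I^{n+m}$ termwise and apply a pigeonhole count to the elementary-tensor factors; these are the same computation.
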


\begin{proof}
  It is well-known that the tensor product of a separable
  $\kk$-algebra with 
  a finite-dimensional semisimple $\kk$-algebra is again
  semisimple, see \cite[Prop.~3.9]{farb-dennis-NC-algebra}.
  This shows that the tensor product $\frac{A}{\rad(A)} \otimes_\kk
  \frac{B}{\rad(B)}$
  is semisimple if one of the factors is separable.
  If both factors are separable over $\kk$, then so is the tensor
  product
  $\frac{A}{\rad(A)} \otimes_\kk
  \frac{B}{\rad(B)}$
  by \cite[Prop.~1.5]{auslander-goldman-brauer} (for
  $R=R_1=R_2=\kk$ there) (using Remark~\ref{r:sep-alg-over-field}).

  We now deduce equality \eqref{eq:rad-otimes} from
  semisimplicity of
  \begin{equation*}
    \frac{A}{\rad(A)} \otimes_\kk \frac{B}{\rad(B)} =
    \frac{A \otimes_\kk B}{\rad(A) \otimes_\kk B + A \otimes_\kk
      \rad(B)}.
  \end{equation*}
  If we can show that
  \begin{equation*}
    I:=\rad(A) \otimes_\kk B + A \otimes_\kk
      \rad(B)
  \end{equation*}
  is a nilpotent ideal, then $I=\rad(A \otimes_\kk B)$
  by \cite[Prop.~I.3.3]{ARS-rep-artin-algebras}, yielding equality
  \eqref{eq:rad-otimes}.
  Observe that
  \begin{equation*}
    I^n
    =
    \sum_{i=0}^n
    \rad(A)^i \otimes_\kk \rad(B))^{n-i}
  \end{equation*}
  for any $n \in \bN$.
  Since $\rad(A)$ and $\rad(B)$ are nilpotent two-sided
  ideals, by \cite[Prop.~I.3.1]{ARS-rep-artin-algebras}, we see
  that $I$ is nilpotent as well. This proves the proposition.
\end{proof}

\begin{corollary}
  \label{c:otimes-rad-separable-alg}
  Let $A$ and $B$ be finite-dimensional $\kk$-algebras such that
  at least one of $\frac{A}{\rad(A)}$ and $\frac{B}{\rad(B)}$ is
  a separable
  $\kk$-algebra.
  If $E$ and $F$ are classical generators of $\D^\bd(\mod(A))$ and
  $\D^\bd(\mod(B))$, respectively,
  then $E \otimes_\kk F$
  is a classical generator of
  $\D^\bd(\mod(A \otimes_\kk B))$.
\end{corollary}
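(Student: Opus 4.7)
The plan is to reduce to the canonical classical generator $\frac{A \otimes_\kk B}{\rad(A \otimes_\kk B)}$ supplied by Lemma~\ref{l:class-gen-Dbmod-fin-dim-alg}, identify it with $\frac{A}{\rad(A)} \otimes_\kk \frac{B}{\rad(B)}$ via Proposition~\ref{p:otimes-rad-separable-alg}, and then place this module inside $\thick(E \otimes_\kk F)$ by exploiting $\otimes_\kk$ as a bifunctor.

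Concretely, since $A \otimes_\kk B$ is a finite-dimensional $\kk$-algebra, Lemma~\ref{l:class-gen-Dbmod-fin-dim-alg} gives that $\frac{A \otimes_\kk B}{\rad(A \otimes_\kk B)}$ is a classical generator of $\D^\bd(\mod(A \otimes_\kk B))$, and Proposition~\ref{p:otimes-rad-separable-alg} identifies this module with $\frac{A}{\rad(A)} \otimes_\kk \frac{B}{\rad(B)}$ (this is the only step that uses the separability hypothesis). It therefore suffices to establish
\begin{equation*}
  \frac{A}{\rad(A)} \otimes_\kk \frac{B}{\rad(B)} \in
  \thick_{\D^\bd(\mod(A \otimes_\kk B))}(E \otimes_\kk F).
\end{equation*}

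For this I would observe that, for any bounded complex $G$ of finite-dimensional $B$-modules, tensoring over $\kk$ defines a triangulated functor
\begin{equation*}
  (-) \otimes_\kk G \colon \D^\bd(\mod(A)) \to \D^\bd(\mod(A \otimes_\kk B));
\end{equation*}
the target is correct because $\otimes_\kk$ preserves finite dimension and, as $\kk$ is a field, preserves exactness. Any triangulated functor sends $\thick(E)$ into the thick subcategory generated by the image of $E$, so taking $G = F$ and using $\frac{A}{\rad(A)} \in \thick(E) = \D^\bd(\mod(A))$ yields $\frac{A}{\rad(A)} \otimes_\kk F \in \thick(E \otimes_\kk F)$. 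The symmetric argument applied to the triangulated functor $\frac{A}{\rad(A)} \otimes_\kk (-) \colon \D^\bd(\mod(B)) \to \D^\bd(\mod(A \otimes_\kk B))$ and the classical generator $F$ of $\D^\bd(\mod(B))$ then places $\frac{A}{\rad(A)} \otimes_\kk \frac{B}{\rad(B)}$ inside $\thick\bigl(\frac{A}{\rad(A)} \otimes_\kk F\bigr) \subset \thick(E \otimes_\kk F)$, which is what we need.

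There is no serious obstacle here: all of the substance sits in Proposition~\ref{p:otimes-rad-separable-alg}, and what remains amounts to verifying that $\otimes_\kk$ is a well-defined triangulated bifunctor on the relevant bounded derived categories of finite-dimensional modules.
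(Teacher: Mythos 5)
Your proof is correct and follows exactly the same route as the paper: reduce to showing $\frac{A}{\rad(A)}\otimes_\kk\frac{B}{\rad(B)}\in\thick(E\otimes_\kk F)$ by tensoring in one variable at a time, then identify this module with $\frac{A\otimes_\kk B}{\rad(A\otimes_\kk B)}$ via Proposition~\ref{p:otimes-rad-separable-alg} and appeal to Lemma~\ref{l:class-gen-Dbmod-fin-dim-alg}. There is nothing to add.
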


\begin{proof}
  Recall that $\ol{A}:=\frac{A}{\rad(A)}$ and $\ol{B}:=\frac{B}{\rad(B)}$ are classical
  generators of $\D^\bd(\mod(A))$ and
  $\D^\bd(\mod(B))$, respectively, by
  Lemma~\ref{l:class-gen-Dbmod-fin-dim-alg}.
  From $\ol{A} \in \thick(E)$ we obtain
  $\ol{A} \otimes_\kk F \in  \thick(E \otimes_\kk F)$.
  From $\ol{B} \in \thick(F)$ we obtain
  $\ol{A} \otimes_\kk \ol{B} \in  \thick(\ol{A} \otimes_\kk F) \subset
  \thick(E \otimes_\kk F)$.
  Hence it is sufficient to show that
  $\ol{A} \otimes_\kk \ol{B}$ is a classical generator of
  $\D^\bd(\mod(A \otimes_\kk B))$.
  But $\frac{A \otimes_\kk B}{\rad (A \otimes_\kk B)}$
  is a classical generator of this category, by
  Lemma~\ref{l:class-gen-Dbmod-fin-dim-alg}, and
  $\ol{A} \otimes_\kk \ol{B} = \frac{A \otimes_\kk B}{\rad (A
    \otimes_\kk B)}$ by
  Proposition~\ref{p:otimes-rad-separable-alg}.
\end{proof}

\begin{theorem}
  \label{t:Dbmod-findimalg-separable-smooth}
  Let $A$ be a finite-dimensional algebra over a field $\kk$ such
  that $\frac{A}{\rad(A)}$ is separable over $\kk$ (this
  condition is automatic if $\kk$ is perfect).
  Then
  $\D^\bd(\mod(A))$ is smooth over $\kk$ (in the sense defined in
  Remark~\ref{r:DbmodA-smooth}).
\end{theorem}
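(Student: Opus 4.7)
The plan is to apply Theorem~\ref{t:sufficient-for-smoothness} to the $\kk$-algebra $A$ (viewed as a one-object $\kk$-linear category) with $\mathcal{T} := \D^\bd_{\mod(A)}(A) = \D^\bd(A)_\pseudocoh \subset \D(A)$, whose smoothness is equivalent to that of $\D^\bd(\mod(A))$ by Remark~\ref{r:DbmodA-smooth} and equivalence~\eqref{eq:DbmodA-DbApscoh}, and with dualizing object $\mathscr{D} := A^* = \Hom_\kk(A, \kk)$ equipped with its natural $A$-bimodule structure. By Lemma~\ref{l:class-gen-Dbmod-fin-dim-alg}, the module $E := \ol{A} = A/\rad(A)$ is a classical generator of $\mathcal{T}$.

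First I would verify that $\mathscr{D}$ is dualizing. Since $A$ is projective as a module on either side, $\kk$-linear duality shows that $A^*$ is injective both as a right and as a left $A$-module. Hence $\Rd\Hom_A(-, A^*) \cong \Hom_\kk(-, \kk)$ and $\Rd\Hom_{A^\opp}(-, A^*) \cong \Hom_\kk(-, \kk)$, so the double dualization appearing in the definition of a dualizing object is simply double $\kk$-linear dualization. The canonical map $E \to E^{**}$ is an isomorphism because $E$ is finite-dimensional; by Remark~\ref{r:dualizing-on-classical-generator} this suffices for $\mathscr{D}$ to be a dualizing object for $\mathcal{T}$. Under the resulting duality, $E^\vee = \ol{A}^*$ is a finite-dimensional $A^\opp$-module, so both $E$ and $E^\vee$ lie in the respective bounded pseudo-coherent subcategories, establishing condition~\ref{enum:T-class-gen}.

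For condition~\ref{enum:dualizing-object-from-T-Tvee}, by Remark~\ref{r:on-t:sufficient-for-smoothness} it suffices to show $\mathscr{D} = A^* \in \thick(\ol{A} \otimes_\kk \ol{A}^*)$ in $\D(A \otimes_\kk A^\opp)$. I would proceed in two steps. Since separability of $\ol{A}$ passes to the opposite algebra (and $\ol{A^\opp} \cong \ol{A}^\opp$), Corollary~\ref{c:otimes-rad-separable-alg} applied with $B := A^\opp$ yields that $\ol{A} \otimes_\kk \ol{A}^\opp \cong \ol{A \otimes_\kk A^\opp}$ is a classical generator of $\D^\bd(\mod(A \otimes_\kk A^\opp))$. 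Since $A^*$ is a finite-dimensional bimodule, it lies in $\thick(\ol{A} \otimes_\kk \ol{A}^\opp)$. Second, the semisimple algebra $\ol{A}$ has the property that both $\ol{A}^\opp$ and $\ol{A}^*$ are classical generators of $\mod(\ol{A}^\opp)$, since each contains every simple right $\ol{A}^\opp$-module as a summand; in particular $\ol{A}^\opp$ is a direct summand of $(\ol{A}^*)^n$ as an $\ol{A}^\opp$-module for some $n$. Tensoring on the left with $\ol{A}$ over $\kk$ realizes $\ol{A} \otimes_\kk \ol{A}^\opp$ as a direct summand of $(\ol{A} \otimes_\kk \ol{A}^*)^n$ in $\mod(A \otimes_\kk A^\opp)$, so $\thick(\ol{A} \otimes_\kk \ol{A}^\opp) \subset \thick(\ol{A} \otimes_\kk \ol{A}^*)$ and the desired containment follows.

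The main obstacle is this last step: condition~\ref{enum:dualizing-object-from-T-Tvee} is phrased in terms of $\ol{A} \otimes_\kk \ol{A}^*$, while Corollary~\ref{c:otimes-rad-separable-alg} naturally produces the generator $\ol{A} \otimes_\kk \ol{A}^\opp$. Bridging the two relies on the semisimplicity of $\ol{A}$, which is in turn automatic from the separability hypothesis.
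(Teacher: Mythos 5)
Your overall strategy coincides exactly with the paper's: take $\mathcal{T} = \D^\bd_{\mod(A)}(A)$, take $\mathscr{D} = \Hom_\kk(A,\kk)$, identify $\Rd\Hom_A(-,\mathscr{D})$ with $\kk$-linear duality, and invoke Theorem~\ref{t:sufficient-for-smoothness}. The only real divergence is in the final step, and there your ``main obstacle'' is self-imposed rather than genuine.

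The paper avoids the bridging you do in step~2 by a more careful choice of input to Corollary~\ref{c:otimes-rad-separable-alg}. That corollary is stated for \emph{arbitrary} classical generators $E$ of $\D^\bd(\mod(A))$ and $F$ of $\D^\bd(\mod(A^\opp))$, not just the ``canonical'' ones $\ol{A}$ and $\ol{A^\opp}$. Since the duality $\Rd\Hom_A(-,\mathscr{D})\colon \mathcal{T}\sira(\mathcal{T}^\vee)^\opp$ is a triangulated equivalence, it carries the classical generator $E=\ol{A}$ of $\mathcal{T}$ to a classical generator $E^\vee=\ol{A}^*$ of $\mathcal{T}^\vee = \D^\bd(\mod(A^\opp))$ (this is precisely the observation in Remark~\ref{r:on-t:sufficient-for-smoothness}). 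Feeding the pair $(E, E^\vee) = (\ol{A},\ol{A}^*)$ into Corollary~\ref{c:otimes-rad-separable-alg} gives directly that $\ol{A}\otimes_\kk\ol{A}^*$ is a classical generator of $\D^\bd(\mod(A\otimes_\kk A^\opp))$, and since $\mathscr{D}$ lies in that category, condition~\ref{enum:dualizing-object-from-class-gen-T-Tvee} holds with no further work. Your detour through $\ol{A}\otimes_\kk\ol{A^\opp}$ and the semisimplicity argument (that $\ol{A}^*$ and $\ol{A^\opp}$ each contain all simple right $\ol{A}^\opp$-modules as summands, hence mutually classically generate $\mod(\ol{A}^\opp)$) is correct, but it duplicates work the duality equivalence already does for free.

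Your verification that $\mathscr{D}$ is dualizing also differs mildly from the paper's: you observe that $A^*$ is injective on both sides and apply Remark~\ref{r:dualizing-on-classical-generator} to the classical generator $\ol{A}$, whereas the paper establishes the adjoint equivalence~\eqref{eq:k-dual-DbmodA} directly on all of $\mathcal{T}$ and invokes Remark~\ref{r:dualizing-object-from-duality}. Both are valid; yours is slightly more economical at this point.
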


The idea of proof is as follows. The standard equivalence ``take
the $\kk$-linear dual''
$\D^\bd(\mod(A)) \sira (\D^\bd(\mod(A^\opp)))^\opp$ is
equivalently obtained from the dualizing bimodule
$A= \leftidx{_A}{A}{_A}$. This bimodule is an object of
$\D^\bd(\mod(A \otimes_\kk A^\opp))$, and this category coincides, by
our separability assumption, with its thick subcategory generated
by all tensor products of objects of $\D^\bd(\mod(A))$ and
$\D^\bd(\mod(A^\opp))$. Moreover, $\D^\bd(\mod(A))$ has a classical
generator. This shows that the sufficient condition for
smoothness
of Theorem~\ref{t:sufficient-for-smoothness} is satisfied
for $\D^\bd(\mod(A))$.

\begin{proof}
  Remember that $\D^\bd(\mod(A))$ is equivalent to the category
  \begin{equation}
    \label{eq:T}
    \mathcal{T}:=\D^\bd_{\mod(A)}(A)=\D^\bd(A)_\pseudocoh
  \end{equation}
  (see equivalence \eqref{eq:DbmodA-DbApscoh} and equality
  \eqref{eq:Db_modA-DbApscoh}
  in Remark~\ref{r:algebra-as-lincat}).
  By our definition
  of $\kk$-smoothness of $\D^\bd(\mod(A))$
  in
  Remark~\ref{r:DbmodA-smooth}
  we need to prove that $\mathcal{T}$ is $\kk$-smooth.
  We will use the sufficient condition for smoothness of
  Theorem~\ref{t:sufficient-for-smoothness}.

  If $M$ is a right $A$-module, then its $\kk$-linear dual
  $M^*:=\Hom_\kk(M,\kk)$ is a left $A$-module, i.\,e.\ a right
  $A^\opp$-module, and similarly the $\kk$-linear dual
  $\leftidx{^*}{N}{}$ of an $A^\opp$-module is an $A$-module.
  More precisely we have an adjunction of exact functors
  \begin{equation}
    \label{eq:k-dual-ModA}
    (-)^*
    \colon
    \Mod(A) \rightleftarrows (\Mod(A^\opp))^\opp
    \colon
    \leftidx{^*}{(-)}{}
  \end{equation}
  between abelian categories whose unit and counit
  are the obvious maps into the bidual.
  It induces an adjunction
  $\D(A) \rightleftarrows (D(A^\opp))^\opp$
  on (unbounded) derived categories which restricts to an adjoint
  equivalence
  \begin{equation}
    \label{eq:k-dual-DbmodA}
    (-)^*
    \colon
    \mathcal{T}=\D^\bd_{\mod(A)}(A)
    \rightleftarrows
    (\D^\bd_{\mod(A^\opp)}(A^\opp))^\opp
    \colon
    \leftidx{^*}{(-)}{}
  \end{equation}
  because any object of either category is isomorphic to a
  bounded complex of finite-dimensional modules over $A$ or
  $A^\opp$, respectively.

  Note that $A=\leftidx{_A}{A}{_A}$ is both a left $A$-module and
  a right $A$-module. Hence $\mathscr{D}:=\Hom_\kk(A,\kk) \in
  \mod(A \otimes_\kk A^\opp)$. This is the natural candidate
  bimodule to induce the equivalence~\eqref{eq:k-dual-DbmodA}. Let
  us check that it indeed induces this equivalence.
  If $M$ is a right $A$-module, then
  \begin{multline*}
    M^*=\Hom_\kk(M,\kk)
    =\Hom_\kk(M \otimes_A \leftidx{_A}{A}{_A}, \kk)\\
    =\Hom_A(M, \Hom_\kk(\leftidx{_A}{A}{_A}, \kk))=\Hom_A(M, \mathscr{D})
  \end{multline*}
  as left $A$-modules natural in $M$.
  Similarly, if $N$ is a left $A$-module,
  then
  \begin{multline*}
    \leftidx{^*}{N}{}=\Hom_\kk(N,\kk)
    =\Hom_\kk(\leftidx{_A}{A}{_A} \otimes_A N, \kk)\\
    =\Hom_{A^\opp}(N, \Hom_\kk(\leftidx{_A}{A}{_A}, \kk))
    =\Hom_{A^\opp}(N, \mathscr{D})
  \end{multline*}
  as right $A$-modules natural in $N$.
  Hence the functors $(-)^*$ and
  $\leftidx{^*}{(-)}{}$
  in the adjunction
  \eqref{eq:k-dual-ModA} and in the adjoint equivalence
  \eqref{eq:k-dual-DbmodA}
  may be written as
  \begin{equation*}
    (-)^* = \Hom_A(-,\mathscr{D})
    \qquad \text{and} \qquad
    \leftidx{^*}{(-)}{} = \Hom_{A^\opp}(-,\mathscr{D}).
  \end{equation*}
  Moreover, the unit and counit of the adjunction
  \eqref{eq:k-dual-ModA} correspond to the unit and counit
  of the adjunction obtained from $\mathscr{D}$, cf.\
  \eqref{eq:Hom-to-X-adjunction}.
  Since \eqref{eq:k-dual-DbmodA} is an adjoint equivalence,
  Remark~\ref{r:dualizing-object-from-duality} shows that
  $\mathscr{D}$ is a dualizing object for $\mathcal{T}$ and that
  \begin{equation}
    \label{eq:Tvee}
    \mathcal{T}^\vee=\D^\bd_{\mod(A^\opp)}(A^\opp)=\D^\bd(A^\opp)_\pseudocoh
  \end{equation}
  where the last equality comes from \eqref{eq:Db_modA-DbApscoh}.

  Now it is easy to check conditions
  \ref{enum:T-class-gen}
  and
  \ref{enum:dualizing-object-from-T-Tvee}
  from
  Theorem~\ref{t:sufficient-for-smoothness}
  in our situation.

  Condition~\ref{enum:T-class-gen} is obviously satisfied since
  $\mathcal{T}=\D^\bd_{\mod(A)}(A)\cong \D^\bd(\mod(A))$ has a
  classical generator, by
  Lemma~\ref{l:class-gen-Dbmod-fin-dim-alg}, and since the
  equalities \eqref{eq:T} and
  \eqref{eq:Tvee} hold.

  In order to check
  condition~\ref{enum:dualizing-object-from-T-Tvee},
  let $E$ be a classical generator of $\mathcal{T}$. Then its
  dual $E^*$ is a classical generator of $\mathcal{T}^{\vee}$.
  We may assume without loss of generality that $E$ and $E^*$ are
  bounded complexes of finite-dimensional modules.
  Since $\frac{A}{\rad(A)}$ is a separable $\kk$-algebra and the
  opposite of any separable algebra is separable we see that
  $\big(\frac{A}{\rad(A)}\big)^\opp=\frac{A^\opp}{\rad(A^\opp)}$
  is separable over $\kk$. Therefore
  $E \otimes_\kk E^*$ is a classical generator of
  $\D^\bd(\mod(A \otimes_\kk A^\opp))$, by
  Corollary~\ref{c:otimes-rad-separable-alg}, and also of the
  equivalent category
  $\D^\bd_{\mod(A \otimes_\kk A^\opp)}(A \otimes_\kk A^\opp)$.
  Since $\mathscr{D}$ obviously lies in this category we see that
  condition~\ref{enum:dualizing-object-from-T-Tvee} is satisfied;
  more precisely, we have checked the equivalent condition
  \ref{enum:dualizing-object-from-class-gen-T-Tvee}
  in Remark~\ref{r:on-t:sufficient-for-smoothness}.
  Now Theorem~\ref{t:sufficient-for-smoothness} applies and shows
  that $\D^\bd(\mod(A))$ is smooth over $\kk$.
\end{proof}

\begin{remark}
  \label{r:dgEndP-smooth}
  Let $A$ be a finite-dimensional algebra over a field $\kk$ such
  that $\frac{A}{\rad(A)}$ is separable over $\kk$.
  Then smoothness over $\kk$ of $\D^\bd(\mod(A))$ (see
  Theorem~\ref{t:Dbmod-findimalg-separable-smooth})
  has the following down-to-earth interpretation, by
  Remark~\ref{r:smoothness-dg-endos-classical-generator}:
  Let $S=\bigoplus_{i=1}^r S_i$ be a finite direct sum of simple
  $A$-modules such that each simple $A$-module is isomorphic to
  one of the $S_i$.
  Let $P$ be a projective resolution of $S$ in $\mod(A)$.
  Then the dg algebra $\ul\C_A(P,P)$ of endomorphisms of $P$ is
  $\kk$-smooth.
\end{remark}

\begin{remark}
  \label{r:finite-global-dim-smooth}
  Let $A$ be a finite-dimensional algebra over a field $\kk$
  such that $\frac{A}{\rad(A)}$ is separable over $\kk$ (this
  is automatic if $\kk$ is perfect), and
  assume that $A$ has finite global dimension.
  Then
  $A$ is a
  classical generator of $\D^\bd(\mod(A))$.  We have proven
  that this category is $\kk$-smooth, see
  Theorem~\ref{t:Dbmod-findimalg-separable-smooth}. This means
  that $A$ itself is $\kk$-smooth, by
  Remark~\ref{r:smoothness-dg-endos-classical-generator}.
  This also follows from \cite[Lemma~7.2]{rouquier-dimensions}.
\end{remark}

\section{Existence of a classical generator of
  \texorpdfstring{$\D^\bd(\coh(\mathcal{A}))$}{Db(coh A)}}
\label{sec:exist-class-gener}

The goal of this section is
Theorem~\ref{t:generator-DbcohA}: Given a coherent
$\mathcal{O}_X$-algebra $\mathcal{A}$ on a noetherian J-2 scheme $X$
(for example a scheme of finite type over a field or over
the integers),
the bounded derived category
$\D^\bd(\coh(\mathcal{A}))$
of coherent $\mathcal{A}$-modules has a classical generator.
We also prove Theorem~\ref{t:boxtimes-and-generators} which
says that the boxproduct of classical generators is a classical
generator for finite type schemes over a perfect field.

In contrast to our convention in the rest of this article, we
work with left modules in this section because this seems to be
the standard choice in commutative algebra and algebraic geometry.
By a \emph{noetherian} ring we mean a \emph{left noetherian} ring.

\subsection{Noncommutative structure sheaves}
\label{sec:nonc-struct-sheav}

Let $\mathcal{A}$ be a sheaf of (possibly noncommutative) rings on
a topological space. Quasi-coherent and coherent
(left) $\mathcal{A}$-modules are defined in the usual way
(cf.\ \cite[\sptag{01BE}, \sptag{01BV}]{stacks-project}
for modules over a sheaf of commutative rings).
Every coherent $\mathcal{A}$-module is quasi-coherent
(cf.\ \cite[\sptag{01BW}]{stacks-project}).
Let $\Qcoh(\mathcal{A})$ and $\coh(\mathcal{A})$ denote the
corresponding full
subcategories of the abelian category $\Mod(\mathcal{A})$ of all
$\mathcal{A}$-modules.
Recall that $\coh(\mathcal{A})$ is a full abelian subcategory of
$\Mod(\mathcal{A})$
(cf.\ \cite[\sptag{01BY}]{stacks-project}).

If $X$ is a scheme, the category $\Qcoh(\mathcal{O}_X)$ is a full
abelian subcategory of
$\Mod(\mathcal{O}_X)$
(see \cite[\sptag{01LA}]{stacks-project}).

\begin{lemma}
  \label{l:A-vs-O-qcoh-coh}
  Let $X$ be a scheme, $\mathcal{A}$ an $\mathcal{O}_X$-algebra
  (not necessarily commutative)
  and $M$ an $\mathcal{A}$-module.
  \begin{enumerate}
  \item
    \label{enum:Aqc=Oqc}
    Assume that $\mathcal{A}$ is
    $\mathcal{O}_X$-quasi-coherent. Then
    $M$ is $\mathcal{A}$-quasi-coherent if and only if $M$ is
    $\mathcal{O}_X$-quasi-coherent.
    In particular, $\Qcoh(\mathcal{A})$ is a full abelian
    subcategory of $\Mod(\mathcal{A})$.
  \item
    \label{enum:Ac=Oc}
    Assume that $X$ is locally noetherian and that $\mathcal{A}$ is
    $\mathcal{O}_X$-coherent. Then
    $M$ is $\mathcal{A}$-coherent if and only if $M$ is
    $\mathcal{O}_X$-coherent.
  \end{enumerate}
\end{lemma}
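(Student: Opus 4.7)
The plan is to reduce both statements to the affine case and use the equivalence of categories $\Qcoh(\mathcal{O}_U) \simeq \Mod(R)$ for $U = \Spec R$, which is compatible with the $\mathcal{A}$-action in a suitable sense. Part (a) is the core of the lemma; part (b) then follows by layering in noetherianity and finite generation.

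For part (a), I would work locally on an affine open $U = \Spec R$ chosen so that $\mathcal{A}|_U = \widetilde{A}$ for some $R$-algebra $A$, which is possible since $\mathcal{A}$ is $\mathcal{O}_X$-quasi-coherent. One direction is straightforward: any free $\mathcal{A}$-module is $\mathcal{O}_X$-quasi-coherent, since arbitrary direct sums of $\mathcal{O}_X$-quasi-coherent sheaves are $\mathcal{O}_X$-quasi-coherent, and $\Qcoh(\mathcal{O}_X) \subset \Mod(\mathcal{O}_X)$ is closed under cokernels; so an $\mathcal{A}$-module locally presented by $\mathcal{A}^{(J)} \to \mathcal{A}^{(I)} \to M \to 0$ is automatically $\mathcal{O}_X$-quasi-coherent. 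For the converse, if $M$ is $\mathcal{O}_X$-quasi-coherent then $M|_U = \widetilde{N}$ with $N = \Gamma(U,M)$, and the $\mathcal{A}$-structure makes $N$ into an $A$-module; choose an $A$-linear presentation $A^{(J)} \to A^{(I)} \to N \to 0$ and apply $\widetilde{(\,\cdot\,)}$ to obtain an $\mathcal{A}$-linear presentation of $M|_U$. The ``in particular'' claim is then formal: kernels and cokernels of $\mathcal{A}$-linear maps, computed in $\Mod(\mathcal{A})$, agree underlyingly with those in $\Mod(\mathcal{O}_X)$ and so remain in $\Qcoh(\mathcal{O}_X)$, hence in $\Qcoh(\mathcal{A})$ by what we just proved.

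For part (b), continue to work on $U = \Spec R$ with $R$ noetherian; since $\mathcal{A}$ is $\mathcal{O}_X$-coherent, $A$ is a finite $R$-module, and hence a left noetherian ring. On a locally noetherian scheme, $\mathcal{O}_X$-coherence is equivalent to $\mathcal{O}_X$-quasi-coherence plus local finite generation, and the analogue for $\mathcal{A}$-modules holds by the noetherianity of $A$. If $M$ is $\mathcal{A}$-coherent, then by (a) it is $\mathcal{O}_X$-quasi-coherent, and locally a finitely generated $A$-module, so locally a finitely generated $R$-module (because $A$ itself is finite over $R$), hence $\mathcal{O}_X$-coherent. Conversely, if $M$ is $\mathcal{O}_X$-coherent then (a) gives $\mathcal{A}$-quasi-coherence, and any finite $R$-generating set of $\Gamma(U,M)$ is in particular a finite $A$-generating set, giving $\mathcal{A}$-coherence.

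The main point to watch is in the converse of (a): one must verify that the $\mathcal{A}$-linear presentation obtained by sheafifying an $A$-linear presentation of $\Gamma(U,M)$ recovers the original $\mathcal{A}$-action on $M|_U$, not some other one. This is the statement that the equivalence $\Qcoh(\mathcal{O}_U) \simeq \Mod(R)$ upgrades to an equivalence between $\mathcal{O}_X$-quasi-coherent $\mathcal{A}|_U$-modules and $A$-modules, which is a routine compatibility check once one unwinds the definition of the $\mathcal{A}|_U$-module structure on $\widetilde{N}$.
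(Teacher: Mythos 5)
Your proof is correct, and it is a genuinely different route from the paper's for the nontrivial direction of each part. The paper never invokes Serre's equivalence. For the implication ``$\mathcal{O}_X$-quasi-coherent $\Rightarrow$ $\mathcal{A}$-quasi-coherent'' the paper stays entirely at the sheaf level: starting from a local $\mathcal{O}_U$-linear epimorphism $\bigoplus_I \mathcal{O}_U \twoheadrightarrow M|_U$, it uses the extension-of-scalars adjunction to produce an $\mathcal{A}|_U$-linear epimorphism $\bigoplus_I \mathcal{A}|_U \twoheadrightarrow M|_U$, observes the kernel is again $\mathcal{O}_U$-quasi-coherent (by closure of $\Qcoh(\mathcal{O}_U)$ under kernels), and repeats once to get a local $\mathcal{A}$-presentation. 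This avoids the compatibility check you correctly flag --- that the equivalence $\Qcoh(\mathcal{O}_U) \simeq \Mod(R)$ upgrades to one between quasi-coherent $\mathcal{A}|_U$-modules and $A$-modules. Your approach is more conceptual and does establish that upgraded equivalence (which the paper states separately in its \S\,\ref{sec:affine-situation} anyway), but the paper's adjunction trick is more economical for the lemma as stated. Similarly, for part~\ref{enum:Ac=Oc} the paper verifies the definition of $\mathcal{A}$-coherence directly (kernel of any $\bigoplus_{i=1}^n \mathcal{A}|_U \to M|_U$ is $\mathcal{O}_U$-coherent, hence of finite type over $\mathcal{O}_U$, a fortiori over $\mathcal{A}$), whereas you route through the characterization ``coherent = quasi-coherent + finite type'' for $\mathcal{A}$-modules, which relies on left noetherianity of the local sections of $\mathcal{A}$. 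Both work; the paper's version uses only that $\coh(\mathcal{O}_U)$ is abelian on a locally noetherian scheme, sidestepping any explicit discussion of the noncommutative ring $\mathcal{A}(U)$ being left noetherian.
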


\begin{proof}
  \ref{enum:Aqc=Oqc}
  Assume that $M$ is $\mathcal{A}$-quasi-coherent. Then
  any $x \in X$ has an open neighborhood $U$ in $X$ such that
  there is an exact sequence
  $
  \bigoplus_J \mathcal{A}|_U
  \ra
  \bigoplus_I \mathcal{A}|_U
  \ra
  M|_U \ra 0
  $
  of $\mathcal{A}$-modules.
  Since $\mathcal{A}$ is $\mathcal{O}_X$-quasi-coherent,
  the first two terms of this sequence are
  $\mathcal{O}_U$-quasi-coherent. But then $M|_U$ is
  $\mathcal{O}_U$-quasi-coherent as a cokernel of a morphism
  between quasi-coherent $\mathcal{O}_U$-modules. This shows that
  $M$ is a quasi-coherent $\mathcal{O}_X$-module.

  Conversely, assume that $M$ is $\mathcal{O}_X$-quasi-coherent.
  Then any $x \in X$ has an open neighborhood
  $U$ in $X$ such that there is an epimorphism
  $
  \bigoplus_{I} \mathcal{O}_U \sra M|_U
  $
  of $\mathcal{O}_U$-modules.
  By adjunction we get an epimorphism morphism
  $
  \bigoplus_{I} \mathcal{A}|_U \sra M|_U
  $
  of $\mathcal{A}|_U$-modules. Let $N$ be its kernel, an
  $\mathcal{A}|_U$-module.
  Since $\bigoplus_{I} \mathcal{A}|_U$
  and $M|_U$ are $\mathcal{O}_X$-quasi-coherent, so is $N$.
  Repeating the above argument and possibly replacing $U$ by a
  smaller open neighborhood of $x$, we
  find an epimorphism
  $
  \bigoplus_{J} \mathcal{A}|_U \ra N
  $
  and hence an exact sequence
  $
  \bigoplus_{J} \mathcal{A}|_U \ra
  \bigoplus_{I} \mathcal{A}|_U \ra M|_U
  \ra 0
  $
  of $\mathcal{A}|_U$-modules.
  This shows that $M$ is $\mathcal{A}$-quasi-coherent.

  Since $\Qcoh(\mathcal{O}_X)$ is a full
  abelian subcategory of
  $\Mod(\mathcal{O}_X)$ we deduce that $\Qcoh(\mathcal{A})$ is a
  full abelian subcategory of $\Mod(\mathcal{A})$.

  \ref{enum:Ac=Oc}
  Assume that $M$ is $\mathcal{A}$-coherent.
  Then
  any $x \in X$ has an open neighborhood $U$ in $X$ such that
  there is an exact sequence
  $
  \bigoplus_{j=1}^m \mathcal{A}|_U
  \ra
  \bigoplus_{i=1}^n \mathcal{A}|_U
  \ra
  M|_U \ra 0
  $
  of $\mathcal{A}$-modules.
  Since the first two objects are
  $\mathcal{O}_U$-coherent and
  since $\coh(\mathcal{O}_U)$ is a full abelian subcategory of
  $\Mod(\mathcal{O}_U)$ we see that
  $M|_U$ is $\mathcal{O}_U$-coherent. This implies that $M$ is
  $\mathcal{O}_X$-coherent.

  Conversely, assume that $M$ is $\mathcal{O}_X$-coherent.
  Then $M$ is of finite type over $\mathcal{O}_X$ and a fortiori
  of finite type over $\mathcal{A}$. Let $U \subset X$ be open
  and let $\bigoplus_{i=1}^n \mathcal{A}|_U \ra
  M|_U$ be a morphism of $\mathcal{A}$-modules.
  Let $N$ be its kernel. Since $\bigoplus_{i=1}^n \mathcal{A}|_U$
  and $M|_U$ are $\mathcal{O}_U$-coherent, so is
  $N$. In particular $N$ is of finite type over $\mathcal{O}_U$
  and a fortiori of finite type over $\mathcal{A}$. This shows
  that $M$ is a coherent $\mathcal{A}$-module.
\end{proof}

\subsubsection{Inverse and direct image}
\label{sec:inverse-direct-image}

Let $f \colon Y \ra X$ be a morphism of schemes and let
$\mathcal{A}$ be an $\mathcal{O}_X$-algebra.
Then $\mathcal{B}:=f^*\mathcal{A}$ is an
$\mathcal{O}_Y$-algebra and
\begin{equation}
  \label{eq:f^*-adj-f_*}
  f^* \colon \Mod(\mathcal{A}) \rightleftarrows \Mod(\mathcal{B})
  \colon f_*
\end{equation}
is an adjunction where inverse image $f^*M= \mathcal{B}
\otimes_{f^{-1}\mathcal{A}} f^{-1}M$ and direct image $f_*$ are defined in the
usual way. This adjunction is compatible with the usual adjunction
\begin{equation*}
  f^* \colon \Mod(\mathcal{O}_X) \rightleftarrows \Mod(\mathcal{O}_Y)
  \colon f_*
\end{equation*}
in the sense that $f^*$ and $f_*$ commute with the forgetful
functors $\Mod(\mathcal{A}) \ra \Mod(\mathcal{O}_X)$ and
$\Mod(\mathcal{B}) \ra \Mod(\mathcal{O}_Y)$.

Assume that $f$ is quasi-compact and quasi-separated.
If $\mathcal{A}$ is $\mathcal{O}_X$-quasi-coherent, then
$\mathcal{B}$ is $\mathcal{O}_Y$-quasi-coherent and \eqref{eq:f^*-adj-f_*}
restricts to an adjunction
\begin{equation}
  \label{eq:f^*-adj-f_*-Qcoh}
  f^* \colon \Qcoh(\mathcal{A}) \rightleftarrows \Qcoh(\mathcal{B})
  \colon f_*
\end{equation}
by Lemma~\ref{l:A-vs-O-qcoh-coh}
because $f_*$ maps $\mathcal{O}_Y$-quasi-coherent modules to
$\mathcal{O}_X$-quasi-coherent modules by our assumptions on
$f$ (see
\cite[\sptag{01LC}]{stacks-project}).
If $X$ and $Y$ are locally noetherian and $\mathcal{A}$ is
$\mathcal{O}_X$-coherent, then $\mathcal{B}$ is
$\mathcal{O}_Y$-coherent and $f^*$ restricts to $f^* \colon
\coh(\mathcal{A}) \ra \coh(\mathcal{B})$,
by \cite[\sptag{01XZ}, \sptag{01BQ}]{stacks-project}
and Lemma~\ref{l:A-vs-O-qcoh-coh}. The direct image of
a coherent $\mathcal{B}$-module is in general not $\mathcal{A}$-coherent.

\subsubsection{The affine situation}
\label{sec:affine-situation}

Let $R$ be a commutative ring and $X=\Spec R$.
Serre's theorem states that taking global sections is an
equivalence
\begin{equation*}
  \Qcoh(\mathcal{O}_X) \sira \Mod(R)
\end{equation*}
between abelian categories which is compatible with tensor
products \cite[\sptag{01IB}, \sptag{01I8}]{stacks-project}.

Let $\mathcal{A}$ be a quasi-coherent $\mathcal{O}_X$-algebra and
$A=\mathcal{A}(X)$ the corresponding $R$-algebra.
Then an $A$-module corresponds under
Serre's equivalence to an
$\mathcal{O}_X$-quasi-coherent
$\mathcal{A}$-module which is, by
Lemma~\ref{l:A-vs-O-qcoh-coh}, the same thing as a quasi-coherent
$\mathcal{A}$-module. Hence we obtain an equivalence
\begin{equation*}
  \Qcoh(\mathcal{A}) \sira \Mod(A)
\end{equation*}
of abelian categories.

If $R$ is noetherian, Serre's equivalence restricts to an
equivalence
\begin{equation*}
  \coh(\mathcal{O}_X) \sira \mod(R)
\end{equation*}
where $\mod(R)$ is the category of finite (= finitely generated)
$R$-modules \cite[\sptag{01XZ}]{stacks-project}.
Let $\mathcal{A}$ be a coherent $\mathcal{O}_X$-algebra (= an
$\mathcal{O}_X$-algebra that is coherent as an
$\mathcal{O}_X$-module) and
$A=\mathcal{A}(X)$ the corresponding finite (and hence
noetherian) $R$-algebra (=
$R$-algebra that is finite as an $R$-module).
The same argument as above yields an equivalence
\begin{equation}
  \label{eq:cohA-modA}
  \coh(\mathcal{A}) \sira \mod(A)
\end{equation}
of abelian categories.

If $f \colon Y=\Spec S \ra X=\Spec R$ is a morphism of affine schemes,
the adjunction \eqref{eq:f^*-adj-f_*-Qcoh} corresponds to
the usual adjunction
\begin{equation*}
  B \otimes_A - \colon \Mod(A) \rightleftarrows \Mod(B)
  \colon \res^B_A
\end{equation*}
between extension and restriction of scalars along $A \ra B$
where $B:=(f^*\mathcal{A})(Y)= S \otimes_R A$.

\subsection{Verdier localization sequences and classical
  generators}
\label{sec:verd-local-sequ-1}

\begin{definition}
  \label{d:verdier-sequence}
  We say that a sequence
  \begin{equation*}
    \mathcal{S} \xra{i} \mathcal{T} \xra{q}
    \mathcal{Q}
  \end{equation*}
  of triangulated categories and functors is a
  \define{Verdier localization sequence} if the composition
  $q \circ i$ is zero, $i$ is fully faithful, and the induced
  functor from
  the Verdier quotient $\mathcal{T}/\im(i)$ to $\mathcal{Q}$
  is an equivalence where $\im(i)$ is the essential image of $i$.
\end{definition}

\begin{proposition}
  \label{p:verdier-seq-generators}
  Let $\mathcal{S} \xra{i} \mathcal{T} \xra{q} \mathcal{Q}$ be a
  Verdier localization sequence. If $\mathcal{S}$ and
  $\mathcal{Q}$ have classical generators, then $\mathcal{T}$
  has a classical generator.

  More precisely, if $E$ is a
  classical generator of $\mathcal{S}$ and $F$ is an object of
  $\mathcal{T}$ such that $q(F)$ is a classical generator of
  $\mathcal{Q}$, then $i(E) \oplus F$ is a classical generator of
  $\mathcal{T}$.
\end{proposition}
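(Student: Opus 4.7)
The plan is to show that $G := i(E) \oplus F$ classically generates $\mathcal{T}$, that is, $\mathcal{T}' := \thick_\mathcal{T}(G) = \mathcal{T}$. A first observation is that $\im(i) \subset \mathcal{T}'$: since $i$ is fully faithful and triangulated, it restricts to an equivalence $\mathcal{S} \sira \im(i)$, and since $\im(i)$ is thick in $\mathcal{T}$ (being the kernel of the Verdier quotient $q$), the hypothesis $\thick_\mathcal{S}(E) = \mathcal{S}$ translates into $\thick_\mathcal{T}(i(E)) = i(\mathcal{S}) = \im(i)$.

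The central step is to show that the strictly full subcategory $q(\mathcal{T}') \subset \mathcal{Q}$ consisting of objects isomorphic to $q(T')$ for some $T' \in \mathcal{T}'$ is itself a thick subcategory of $\mathcal{Q}$. Closure under shifts is immediate. For closure under cones, given a morphism $f \colon q(T_1') \to q(T_2')$ with $T_i' \in \mathcal{T}'$, I would use the roof description of the Verdier quotient $\mathcal{Q} \simeq \mathcal{T}/\im(i)$ to write $f = q(g) \circ q(s)^{-1}$ for morphisms $s \colon W \to T_1'$ and $g \colon W \to T_2'$ in $\mathcal{T}$ with $\Cone(s) \in \im(i)$. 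Because $T_1' \in \mathcal{T}'$ and $\Cone(s) \in \im(i) \subset \mathcal{T}'$, the triangle on $s$ forces $W \in \mathcal{T}'$, and then $\Cone(g) \in \mathcal{T}'$, while $q(\Cone(g)) \cong \Cone(f)$. For closure under direct summands, suppose $q(T') \cong X \oplus Y$ with $T' \in \mathcal{T}'$; choose any lifts $X', Y' \in \mathcal{T}$ of $X$ and $Y$ (using essential surjectivity of $q$). Then $q(X' \oplus Y') \cong q(T')$, so the roof argument in the next paragraph places $X' \oplus Y'$ in $\mathcal{T}'$; since $\mathcal{T}'$ is thick in $\mathcal{T}$, one gets $X' \in \mathcal{T}'$ and hence $X \in q(\mathcal{T}')$.

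With the central step in hand, the conclusion is quick. Since $q(F) \in q(\mathcal{T}')$ and $q(F)$ classically generates $\mathcal{Q}$, we have $q(\mathcal{T}') = \mathcal{Q}$. Given any $T \in \mathcal{T}$, pick $T' \in \mathcal{T}'$ with $q(T) \cong q(T')$, and represent this isomorphism by a roof $T \xla{s} W \xra{g} T'$ in $\mathcal{T}$ with $\Cone(s), \Cone(g) \in \im(i)$. The triangle $W \to T' \to \Cone(g) \to W[1]$ places $W$ in $\thick_\mathcal{T}(T', \Cone(g)) \subset \mathcal{T}'$, and then $W \to T \to \Cone(s) \to W[1]$ places $T$ in $\thick_\mathcal{T}(W, \Cone(s)) \subset \mathcal{T}'$, as required.

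The principal obstacle I anticipate is the direct-summand closure in the central step: images of thick subcategories under triangulated functors are not thick in general, so some work is needed to see that $q(\mathcal{T}')$ is closed under summands in $\mathcal{Q}$. This is precisely where the combination of essential surjectivity of the Verdier quotient with the thickness of $\mathcal{T}'$ inside $\mathcal{T}$ saves the day, allowing one to trade a summand in $\mathcal{Q}$ for a summand of a lift in $\mathcal{T}$.
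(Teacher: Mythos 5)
Your proof is correct, but it takes a more elementary and self-contained route than the paper. The paper simply cites Verdier's result (from his th\`ese) that there is a bijection between thick subcategories of $\mathcal{T}$ containing $\mathcal{S}$ and thick subcategories of $\mathcal{Q} = \mathcal{T}/\mathcal{S}$, and then observes that the image of $\mathcal{U} = \thick(i(E)\oplus F)$ in $\mathcal{Q}$ is a thick subcategory containing $q(F)$, hence all of $\mathcal{Q}$, forcing $\mathcal{U} = \mathcal{T}$. Your argument re-derives exactly the part of that bijection needed here --- that $q(\mathcal{T}')$ is thick in $\mathcal{Q}$, and that if $q(T)$ lands in $q(\mathcal{T}')$ then $T\in\mathcal{T}'$ --- by manipulating roofs explicitly. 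This buys self-containedness at the cost of length; the paper's route is shorter but requires an external reference.

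One small imprecision worth correcting: you assert that $\im(i)$ is thick in $\mathcal{T}$ ``being the kernel of the Verdier quotient,'' and hence that $\thick_\mathcal{T}(i(E)) = \im(i)$. In the paper's definition of a Verdier localization sequence, $\im(i)$ is merely a strictly full triangulated subcategory; the kernel of $q$ is the \emph{thick closure} $\overline{\im(i)}$, which can be strictly larger. For the same reason, when $q(s)$ or $q(g)$ is an isomorphism you can only conclude that the relevant cone lies in $\overline{\im(i)}$, not necessarily in $\im(i)$ itself. None of this damages your argument, because all you actually use is the inclusion $\im(i)\subseteq\mathcal{T}'$, and since $\mathcal{T}'$ is thick it automatically contains $\overline{\im(i)}$ as well. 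It would be cleaner to phrase the argument in terms of $\overline{\im(i)}$ throughout, or to note explicitly that $\mathcal{T}'\supseteq\overline{\im(i)}$ so that cones lying in the thick closure still fall inside $\mathcal{T}'$.
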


\begin{proof}
  We assume without loss of generality that $\mathcal{S}$ is a
  strictly full triangulated subcategory of $\mathcal{T}$ and
  that $q$ is the Verdier quotient functor $\mathcal{T} \ra
  \mathcal{T}/\mathcal{S}=\mathcal{Q}$.

  Recall from \cite[Prop.~II.2.3.1, items d), c)$^\text{bis}$,
  d)$^\text{bis}$]{verdier-these} that the
  obvious map defines a bijection between the set of thick
  subcategories of $\mathcal{T}$ containing $\mathcal{S}$ (and
  hence its thick closure in $\mathcal{T}$) and the set of
  thick subcategories of $\mathcal{T}/\mathcal{S} =
  \mathcal{Q}$.

  Let $\mathcal{U}$ be the thick subcategory of $\mathcal{T}$
  generated by $E \oplus F$. It contains $\mathcal{S}$ since $E$
  is a classical generator of $\mathcal{S}$.
  In order to show $\mathcal{U}=\mathcal{T}$ it is enough to see,
  by the above reminder,
  that the image of $\mathcal{U}$ in
  $\mathcal{Q}=\mathcal{T}/\mathcal{S}$ is all of
  $\mathcal{Q}$. But this is true because this image is a thick
  subcategory of
  $\mathcal{Q}$ that contains
  the classical generator $q(F)$.
\end{proof}

\subsection{Verdier localization sequence for  \texorpdfstring{$\D^\bd(\coh(\mathcal{A}))$}{Db(coh A)}}
\label{sec:verd-local-sequ}

If $X$ is a locally noetherian scheme and
$\mathcal{A}$ is a coherent $\mathcal{O}_X$-algebra, we
let $\D(\coh(\mathcal{A}))$ be the derived category of the
abelian category $\coh(\mathcal{A})$ of coherent $\mathcal{A}$
modules.
Its full subcategory
$\D^\bd(\coh(\mathcal{A}))$ of objects $M$
whose
total cohomology $\bigoplus_{n \in \bZ} \HH^n(M)$ is bounded is a
Karoubian subcategory (see \cite{le-chen-karoubi-trcat-bdd-t-str}).
If $Z \subset X$ is a closed subset let
$\D^\bd_Z(\coh(\mathcal{A}))$ denote the full subcategory of
$\D^\bd(\coh(\mathcal{A}))$ of objects whose cohomology sheaves
have support in the set $Z$. It is a thick subcategory.

\begin{theorem}
  \label{t:verdier-open-closed-cohA}
  Let $X$ be a
  noetherian scheme and
  $\mathcal{A}$ a
  coherent $\mathcal{O}_X$-algebra.
  Let $U$ be an open subscheme of $X$ and $Z :=X-U$ its
  closed complement.
  Then the sequence of triangulated categories
  \begin{equation}
    \label{eq:verdier-sequence}
    \D^\bd_Z(\coh(\mathcal{A}))
    \ra
    \D^\bd(\coh(\mathcal{A}))
    \ra
    \D^\bd(\coh(\mathcal{A}|_U))
  \end{equation}
  is a Verdier localization sequence where the first arrow is the
  inclusion and the second arrow is restriction to $U$.
\end{theorem}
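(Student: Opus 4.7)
The plan is to verify the three conditions of Definition~\ref{d:verdier-sequence}. The first two are immediate: the inclusion of a strictly full subcategory is automatically fully faithful, and a bounded complex of coherent $\mathcal{A}$-modules with cohomology supported on $Z$ restricts to an acyclic complex on $U$. Hence the real content is to show that the induced functor
\begin{equation*}
  \bar q \colon \D^\bd(\coh(\mathcal{A}))/\D^\bd_Z(\coh(\mathcal{A})) \ra \D^\bd(\coh(\mathcal{A}|_U))
\end{equation*}
is an equivalence of triangulated categories.

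For essential surjectivity I would rely on the classical extension of coherent modules. By Lemma~\ref{l:A-vs-O-qcoh-coh}.\ref{enum:Ac=Oc}, a coherent $\mathcal{A}$-module is the same as a coherent $\mathcal{O}_X$-module carrying an $\mathcal{A}$-action, and on a noetherian scheme every coherent sheaf on $U$ extends to a coherent sheaf on $X$; adapting the standard argument to respect the $\mathcal{A}$-action gives, for every $\mathcal{F} \in \coh(\mathcal{A}|_U)$, some $\widetilde{\mathcal{F}} \in \coh(\mathcal{A})$ with $\widetilde{\mathcal{F}}|_U \cong \mathcal{F}$. For a bounded complex I would induct on the amplitude, combining truncation triangles with extensions of the cohomology modules and of the connecting morphisms; extending a connecting morphism reduces to the surjectivity part of the fully faithfulness statement we must prove next.

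The main difficulty is fully faithfulness of $\bar q$. Given $M, N \in \D^\bd(\coh(\mathcal{A}))$ and a morphism $g \colon M|_U \ra N|_U$, I would pass into the larger category $\D(\Qcoh(\mathcal{A}))$, where the restriction functor admits the right adjoint $\Rd j_\ast$, and transport $g$ via the unit of adjunction to a morphism $M \ra \Rd j_\ast(N|_U)$. Noetherianity of $X$ should allow one to approximate $\Rd j_\ast(N|_U)$ by coherent $\mathcal{A}$-submodules whose restriction to $U$ is quasi-isomorphic to $N|_U$ and whose difference with $N$ is supported on $Z$. This produces a roof $M \xleftarrow{s} M' \ra N$ with $\Cone(s) \in \D^\bd_Z(\coh(\mathcal{A}))$ whose restriction represents $g$, giving fullness. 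Applied to a roof whose restriction is nullhomotopic, the analogous argument yields a factorization through $\D^\bd_Z(\coh(\mathcal{A}))$, hence faithfulness.

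The hard part will be the coherence control in this approximation step: $\Rd j_\ast$ does not preserve coherence in general, so one must exploit both the noetherianity of $X$ (to produce coherent extensions one cohomology degree at a time) and the boundedness of the complexes (to terminate the induction) in order to stay inside $\coh(\mathcal{A})$. This is presumably the point on which Neeman's assistance was needed; I expect the rigorous argument involves an approximation lemma upgrading the well-understood quasi-coherent Verdier sequence to the coherent setting, rather than a direct manipulation of roofs.
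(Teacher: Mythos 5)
Your general strategy---exploiting the $(j^*, j_*)$ adjunction and extracting coherent subobjects using noetherianity---is indeed the right one, and your instinct that the crux is a coherence-control lemma is correct. However, there are two points where the proposal differs from the paper's argument in ways that matter, and one of them is a real gap.

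First, the paper never passes to $\Rd j_*$. Because $j$ is a quasi-compact quasi-separated open immersion, the \emph{underived} direct image $j_*$ preserves quasi-coherence and satisfies $j^*j_*\cong\id$ already at the chain level, applied componentwise to complexes. The whole argument therefore stays in $\C^\bd(\Qcoh(\mathcal{A}))$ and $\C^\bd(\coh(\mathcal{A}))$; one never needs to choose a representative of a derived-category object or worry about the cohomological behaviour of $\Rd j_*$. The key technical lemma is then genuinely a statement about complexes: \emph{any bounded complex of quasi-coherent $\mathcal{A}$-modules whose restriction to $U$ has coherent terms contains a subcomplex with coherent terms having the same restriction to $U$}. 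This is proved by writing a quasi-coherent $\mathcal{A}$-module as the directed union of its coherent $\mathcal{A}$-submodules and using boundedness to make finitely many choices. You gesture at something like this (``approximate $\Rd j_*(N|_U)$ by coherent $\mathcal{A}$-submodules''), but phrased at the level of $\Rd j_*$ it is not clear what ``submodule'' means; the statement has to be chain-level to have teeth.

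Second, your essential-surjectivity argument by induction on amplitude, with connecting maps handled via ``the surjectivity part of the fully faithfulness statement we must prove next,'' has the dependency backwards. The paper's essential surjectivity is a one-line consequence of the subcomplex-extraction lemma applied to $j_*N$: you get a coherent subcomplex $K\subset j_*N$ with $j^*K = j^*j_*N\cong N$, no induction and no reference to fullness. In fact the paper's fullness proof \emph{uses} essential surjectivity (to normalise the apex of a roof to the form $j^*N'$), so the order must be: subcomplex lemma, then essential surjectivity, then faithfulness, then fullness. As written, your plan either runs in a circle or forces an awkward interleaving. Once you replace $\Rd j_*$ by the underived $j_*$ and formulate the subcomplex-extraction lemma for bounded complexes, both difficulties dissolve, and the remaining steps (transporting homotopies and chain maps through the adjunction, then cutting down to a coherent subcomplex) go through essentially as you outline.
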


\begin{proof}
  We abbreviate $\mathcal{A}_U:=\mathcal{A}|_U$.
  During the proof we assume without loss of generality that all
  objects
  of $\D^\bd(\coh(\mathcal{A}))$ are bounded complexes of
  coherent $\mathcal{A}$-modules, and
  similarly for
  $\D^\bd_Z(\coh(\mathcal{A}))$ and
  $\D^\bd(\coh(\mathcal{A}_U))$.

  Let $j \colon U \ra X$ be the open embedding.  Then
  $\mathcal{A}_U=j^*\mathcal{A}$ and
  $j^* \colon \coh(\mathcal{A}) \ra \coh(\mathcal{A}_U)$ is
  exact. We denote the induced functor
  \begin{equation*}
    j^* \colon \D^\bd(\coh(\mathcal{A})) \ra
    \D^\bd(\coh(\mathcal{A}_U)
  \end{equation*}
  by the same symbol.
  This functor is the second functor in \eqref{eq:verdier-sequence}.
  Clearly, its kernel is the
  subcategory $\D^\bd_Z(\coh(\mathcal{A}))$.  Let
  \begin{equation*}
    \mathcal{V}:=
    \frac{\D^\bd(\coh(\mathcal{A}))}{\D^\bd_Z(\coh(\mathcal{A}))}
  \end{equation*}
  be the Verdier quotient and
  \begin{equation*}
    \Phi \colon
    \mathcal{V}
    \ra
    \D^\bd(\coh(\mathcal{A}_U)
  \end{equation*}
  the induced triangulated functor. We need to prove that
  $\Phi$ is an equivalence. We first prove a useful fact.

  \textbf{Observation:}
  If $M$ is a bounded
  complex of quasi-coherent $\mathcal{A}$-modules
  whose restriction $j^*M$ consists of coherent $\mathcal{A}_U$-modules,
  then there is a subcomplex $K \subset M$ of coherent
  $\mathcal{A}$-modules such that
  $j^*K=j^*M$.

  Recall that every quasi-coherent $\mathcal{O}_X$-module is the
  directed colimit (or union) of
  its coherent submodules, see \cite[\sptag{01XZ},
  \sptag{01PG}]{stacks-project} where we use that $X$ is
  quasi-compact and quasi-separated
  \cite[\sptag{01OY}]{stacks-project}.
  The same statement is true for $\mathcal{A}$-modules: indeed,
  if $G$ is a coherent $\mathcal{O}_X$-submodule of a
  quasi-coherent $\mathcal{A}$-module $F$, then the image of
  $\mathcal{A} \otimes_{\mathcal{O}_X} G \ra F$ is a coherent
  $\mathcal{A}$-submodule of $F$ containing $G$.

  We deduce that every complex $M$ of quasi-coherent
  $\mathcal{A}$-modules is the directed colimit of its subcomplexes
  of coherent $\mathcal{A}$-modules: indeed, each component $M^n$
  is the directed colimit of its coherent
  $\mathcal{A}$-submodules, and if we are given coherent
  $\mathcal{A}$-submodules $N^n$ of $M^n$, for each $n \in \bZ$,
  there is a subcomplex of $M$ with coherent components which
  contains all $N^n$: just take $N^n+d(N^{n-1})$ in degree $n$.

  To prove the observation, let $M$ be a bounded complex of
  quasi-coherent
  $\mathcal{A}$-modules such that $j^*M$ has $\mathcal{A}_U$-coherent
  components. Write $M=\colim M_i$ as a directed colimit of
  subcomplexes $M_i$ of coherent $\mathcal{A}$-modules. Then
  $j^*M=j^*\colim M_i =\colim j^*M_i$. In particular, the $n$-th
  component
  $(j^*M)^n=\colim j^*(M_i^n)$ is
  a directed
  colimit of coherent $\mathcal{A}_U$-submodules and is
  itself $\mathcal{A}_U$-coherent by assumption. Hence
  $(j^*M)^n=j^*(M_i^n)$ for some $i$
  by \cite[\sptag{01Y8}]{stacks-project}
  and Lemma~\ref{l:A-vs-O-qcoh-coh}.
  Since $M$ is bounded
  there is some $i$ such that
  $(j^*M)^n=j^*(M_i^n)$ for all $n \in \bZ$, i.\,e.\ $M_i \subset
  M$ satisfies $j^*M_i=j^*M$.
  This proves the observation.

  In the following we will often use the adjunction
  \begin{equation*}
    j^* \colon \Qcoh(\mathcal{A}) \rightleftarrows
    \Qcoh(\mathcal{A}_U)
    \colon j_*
  \end{equation*}
  from \eqref{eq:f^*-adj-f_*-Qcoh} where we use the fact that
  $j$ is
  quasi-compact and quasi-separated
  as a map between noetherian schemes
  (see \cite[\sptag{02IK}, \sptag{01OY}, \sptag{01KV},
  \sptag{03GI}]{stacks-project}).
  Note that its counit is an isomorphism $j^*j_* \sira \id$.

  \textbf{$\Phi$ is essentially surjective:}
  Let $N$ be a bounded complex of coherent
  $\mathcal{A}_U$-modules.
  Then $j_*N$ is a bounded complex of quasi-coherent
  $\mathcal{A}$-modules which satisifes $j^*j_*N \sira N$.
  Hence our observation yields a subcomplex $K \subset j_*N$ with
  coherent components such that $j^*K = j^*j_*N \sira N$.
  This shows that $\Phi$ is essentially surjective.

  \textbf{$\Phi$ is faithful:}
  Let $g \colon M \ra M'$ be any morphism in
  $\mathcal{V}$.
  Then $g$ can be represented by a roof $M
  \xra{g'} M'' \xla{u} M'$ of morphisms in
  $\D^\bd(\coh(\mathcal{A}))$ where $u$ has cone in
  $\D^\bd_Z(\coh(\mathcal{A}))$, i.\,e.\ $g=u^{-1}g'$.
  Similarly, $g'$ can be represented by a roof
  $M \xra{g''} N \xla{u'} M''$ of morphisms in
  the homotopy category
  $\K^\bd(\coh(\mathcal{A}))$
  where $u'$ is a quasi-isomorphism and $N$ is a bounded complex
  of coherent $\mathcal{A}$-modules,
  i.\,e.\
  $g'=u'^{-1} g''$ in $\D^\bd(\coh(\mathcal{A}))$.
  Then $\Phi(g)=j^*(u)^{-1}j^*(g')=j^*(u)^{-1} j^*(u')^{-1}
  j^*(g'')$
  in $\D^\bd(\coh(\mathcal{A}_U))$.
  For faithfulness of $\Phi$ we need to prove that $\Phi(g)=0$
  implies $g=0$. Equivalently, we need to prove that $j^*(g'')=0$
  in $\D^\bd(\coh(\mathcal{A}_U))$ implies
  $g''=0$ in $\mathcal{V}$.

  Hence the proof of faithfulness of $\Phi$ is reduced to the
  following claim:
  Let $f \colon M \ra N$ be a morphism
  in the category
  $\C^\bd(\coh(\mathcal{A}))$
  of bounded complexes of
  coherent $\mathcal{A}$-modules such that $j^*(f)=0$
  in $\D^\bd(\coh(\mathcal{A}_U))$. Then
  $f=0$ in $\mathcal{V}$.

  The assumption $j^*(f)=0$ in $\D^\bd(\coh(\mathcal{A}_U))$ shows
  that the roof $j^*M \xra{j^*(f)} j^*N \xla{\id} j^*N$
  in the homotopy category
  $\K^\bd(\coh(\mathcal{A}_U))$
  is equivalent to the
  roof
  $j^*M \xra{0} j^*N \xla{\id} j^*N$. Hence there are a bounded complex
  $L$ of coherent $\mathcal{A}_U$-modules and a quasi-isomorphism
  $s \colon j^*N \ra L$
  in $\C^\bd(\coh(\mathcal{A}_U))$
  such that the composition
  \begin{equation*}
    j^*M \xra{j^*(f)} j^*N \xra{s} L
  \end{equation*}
  is homotopic to zero, i.\,e.\ there is a homotopy $h \colon
  j^*M \ra L[1]$ such that $s \circ j^*(f) = d(h)=dh+hd$.
  Let $s' \colon N \ra j_*L$ and $h' \colon M \ra j_*L[1]$
  correspond to $s$ and $h$ under the adjunction. Then the composition
  \begin{equation*}
    M \xra{f} N \xra{s'} j_*L
  \end{equation*}
  is homotopic to zero via the homotopy $h'$.

  Note that the image $s'(N) \subset j_*L$ is a subcomplex of coherent
  $\mathcal{A}$-modules. Similarly, $h'(M[-1]) \subset j_*L$ is a
  graded submodule of coherent $\mathcal{A}$-modules, and the
  subcomplex
  $h'(M[-1]) + d(h'(M[-1])) \subset j_*L$ it generates
  is a subcomplex of coherent $\mathcal{A}$-modules.
  Let $K \subset j_*L$ be a
  subcomplex of coherent $\mathcal{A}$-modules
  which contains these two subcomplexes and has the property that
  $j^*K=j^*j_*L$; it exists by the
  observation made above using
  $j^*j_*L\sira L$.

  By construction, $s'$ and $h'$ factor as $s' \colon N \xra{s''}
  K \hra j_*L$ and
  $h' \colon M \xra{h''} K[1] \hra j_*L[1]$, respectively, and the
  composition
  \begin{equation*}
    M \xra{f} N \xra{s''} K
  \end{equation*}
  is homotopic to zero via the homotopy $h''$, i.\,e.\
  $s'' \circ f=0$ in $\D^\bd(\coh(\mathcal{A}))$.

  Note that $s$ is the composition
  \begin{equation*}
    j^*N \xra{j^*(s'')} j^*K = j^*j_*L \sira L
  \end{equation*}
  of morphisms in $\C^\bd(\coh(\mathcal{A}))$.
  Since $s$ is a quasi-isomorphism, so is $j^*(s'')$. In
  particular, the mapping cone of $s''$ has
  cohomology supported in $Z$. Hence $s''$ becomes
  invertible in
  $\mathcal{V}$. Since
  $s'' \circ f=0$ in $\D^\bd(\coh(\mathcal{A}))$
  this implies $f=0$ in $\mathcal{V}$.
  This finishes the proof that $\Phi$ is faithful.

  \textbf{$\Phi$ is full:}
  Let $M, N$ be bounded complexes in $\coh(\mathcal{A})$. We need
  to show that any morphism
  $f \colon j^*M \ra j^*N$ in $\D^\bd(\coh(\mathcal{A}_U))$ comes
  from a morphism in
  $\mathcal{V}$.

  We first prove this statement under the more restrictive
  assumption that
  $f \colon j^*M \ra j^*N$ is a morphism in
  $\C^\bd(\coh(\mathcal{A}_U))$.

  Consider the diagram
  \begin{equation*}
    \xymatrix{
      {M} \ar[r]^-{\eta_M} & {j_*j^*M} \ar[d]^-{j_*(f)}
      \\
      {N} \ar[r]^-{\eta_N} & {j_*j^*N}
    }
  \end{equation*}
  in $\C^\bd(\Qcoh(\mathcal{A}))$ where the horizontal arrows
  are the respective adjunction units.
  The images of the morphisms $\eta_N$ and $j_*(f) \circ \eta_M$
  are
  subcomplexes
  of $j_*j^*N$ whose components are coherent $\mathcal{A}$-modules.
  Since $j^*j_*j^*N \sira j^*N$ is a complex of coherent
  $\mathcal{A}_U$-modules, there is a subcomplex $K \subset
  j_*j^*N$ consisting of coherent $\mathcal{A}$-modules which
  contains these two images and satisfies
  $j^*K = j^*j_*j^*N$; this follows
  from our observation.

  By construction we obtain the morphisms $\kappa$ and $\mu$ in
  the following left diagram turning it into a commutative
  diagram; the commutative diagram on the right is obtained from
  it by restriction to $U$ and by using formal properties of the
  adjunction $(j^*, j_*)$ where
  $\epsilon$ is the adjunction counit.
  \begin{equation*}
    \xymatrix{
      {M} \ar[rr]^-{\eta_M} \ar[rd]^-{\mu}
      && {j_*j^*M} \ar[d]^-{j_*(f)}
      \\
      {N} \ar@/_{2.5ex}/[rr]_-{\eta_N} \ar[r]^-{\kappa}
      & {K} \ar@{}[r]|-{\subset}
      & {j_*j^*N}
    }
    \qquad
    \xymatrix{
      {j^*M} \ar[rr]_-{j^*(\eta_M)}^-{\sim} \ar[rd]^(0.65){j^*(\mu)}
      \ar@/^{5ex}/[rrr]^-{\id}
      && {j^*j_*j^*M} \ar[d]^-{j^*j_*(f)} \ar[r]_-{\epsilon_{j^*M}}^-{\sim}
      & {j^*M} \ar[d]^-{f}
      \\
      {j^*N} \ar@/_{3ex}/[rr]_(0.6){j^*(\eta_N)}^(0.6){\sim} \ar[r]^-{j^*(\kappa)}
      \ar@/_{7ex}/[rrr]_-{\id}
      & {j^*K} \ar@{}[r]|-{=}
      & {j^*j_*j^*N} \ar[r]_-{\epsilon_{j^*N}}^-{\sim}
      & {j^*N}
    }
  \end{equation*}
  The diagram on the right shows
  $j^*(\kappa) \circ f= j^*(\mu)$ and
  that $j^*(\kappa)=j^*(\eta_N)$ is an
  isomorphism,
  so $\kappa$ becomes invertible in
  $\mathcal{V}$.
  Hence $\kappa^{-1} \circ \mu$ is a
  morphism in
  $\mathcal{V}$
  such that
  $\Phi(\kappa^{-1} \circ \mu)=(j^*(\kappa))^{-1} \circ j^*(\mu)=f$
  in $\D^\bd(\coh(\mathcal{A}_U))$.

  Now assume that $f \colon j^*M \ra j^*N$ is an arbitrary
  morphism in $\D^\bd(\coh(\mathcal{A}_U))$. It can be
  represented by a roof $j^*M \xra{g} j^*N' \xla{q} j^*N$ in
  $\C^\bd(\coh(\mathcal{A}_U))$ where $q$ is a quasi-isomorphism,
  i.\,e.\ $f=q^{-1} g$; here
  we can assume without loss of generality that the apex of
  our roof has the form $j^*N'$ where $N'$ is a bounded complex of coherent
  $\mathcal{A}$-modules, as follows from the proof of essential
  surjectivity of $\Phi$.

  As seen above, there are morphisms $\hat{g} \colon M
  \ra N'$ and $\hat{q} \colon N \ra N'$
  in $\mathcal{V}$
  such that
  $\Phi(\hat{g})=g$ and $\Phi(\hat{q})=q$.

  Note that $\Phi(\Cone(\hat{q}))\cong
  \Cone(\Phi(\hat{q})) \cong \Cone(q) = 0$ since $q$
  is an isomorphism in
  $\D^\bd(\coh(\mathcal{A}_U))$. Since we already know that
  $\Phi$ is faithful we get
  $\Cone(\hat{q}) \cong 0$ and hence $\hat{q}$ is an
  isomorphism. (Abstractly, we have used that a faithful
  triangulated functor reflects isomorphisms.)
  But then $\Phi(\hat{q}^{-1}g)=\Phi(\hat{q})^{-1}\Phi(g)=q^{-1}
  g=f$.
  This finishes the proof that $\Phi$ is full.
\end{proof}

\begin{remark}
  \label{r:serre-open-closed-cohA}
  In the setting of
  Theorem~\ref{t:verdier-open-closed-cohA},
  the sequence of abelian categories
  \begin{equation}
    \label{eq:serre-sequence}
    \coh_Z(\mathcal{A})
    \ra
    \coh(\mathcal{A})
    \ra
    \coh(\mathcal{A}|_U)
  \end{equation}
  is a \textit{Serre localization sequence} where
  $\coh_Z(\mathcal{A})$ is the full subcategory of
  $\coh(\mathcal{A})$ of objects with support in $Z$, the first
  arrow
  is the
  inclusion of this subcategory, and the second arrow is
  restriction to $U$.
  Here the term
  \textit{Serre localization sequence} means that
  the obvious functor from
  $\frac{\coh(\mathcal{A})}{\coh_Z(\mathcal{A})}$ to
  $\coh(\mathcal{A}|_U)$ is an equivalence.
  The proof is an obvious variation of the proof of
  Theorem~\ref{t:verdier-open-closed-cohA} and actually easier
  than this proof.
\end{remark}

\subsection{Classical generators and open-closed decompositions}
\label{sec:class-gener-open}

If $i \colon Z \ra X$ is the inclusion of a closed subscheme into
a noetherian scheme, then
$i_* \colon \Mod(\mathcal{O}_Z) \ra \Mod(\mathcal{O}_X)$ is exact
and preserves coherence. In particular, if $\mathcal{A}$ is a
coherent
$\mathcal{O}_X$-algebra, then
$i_* \colon \coh(i^*\mathcal{A}) \ra \coh(\mathcal{A})$ is
well-defined and exact
(cf.\ Lemma~\ref{l:A-vs-O-qcoh-coh}).
We use the same symbol for the induced functor
$i_* \colon \D^\bd(\coh(i^*\mathcal{A})) \ra
\D^\bd(\coh(\mathcal{A}))$.

\begin{proposition}
  \label{p:generator-support-Z}
  Let $Z$ be a closed subscheme of a noetherian scheme $X$ and
  let $i \colon Z \ra X$ be the inclusion.
  Let
  $\mathcal{A}$ be a coherent $\mathcal{O}_X$-algebra
  and set $\mathcal{A}_Z:=i^*\mathcal{A}$.
  Assume that $E$ is a classical generator of
  $\D^\bd(\coh(\mathcal{A}_Z))$. Then $i_*E$ is a classical
  generator of $\D^\bd_Z(\coh(\mathcal{A}))$.
\end{proposition}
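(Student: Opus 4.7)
The plan is to show the stronger statement that $\thick(i_*E) = \D^\bd_Z(\coh(\mathcal{A}))$, and to deduce it in two independent reductions: first from complexes to coherent modules (via truncation), and second from coherent $\mathcal{A}$-modules supported on $Z$ to $i_*$ of coherent $\mathcal{A}_Z$-modules (via an $\mathcal{I}$-adic filtration, where $\mathcal{I} \subset \mathcal{O}_X$ is the ideal sheaf of $Z$).

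Since $i_* \colon \D^\bd(\coh(\mathcal{A}_Z)) \to \D^\bd_Z(\coh(\mathcal{A}))$ is triangulated and $\thick_{\D^\bd(\coh(\mathcal{A}_Z))}(E) = \D^\bd(\coh(\mathcal{A}_Z))$, the essential image of $i_*$ is contained in $\thick(i_*E)$. It therefore suffices to prove that every $M \in \D^\bd_Z(\coh(\mathcal{A}))$ lies in the thick subcategory of $\D^\bd(\coh(\mathcal{A}))$ generated by $i_*\coh(\mathcal{A}_Z)$. Using the standard truncation triangles $\tau^{\leq n}M \to M \to \tau^{>n}M$ and induction on the cohomological amplitude, every $M \in \D^\bd_Z(\coh(\mathcal{A}))$ is built by finitely many cones and shifts from its cohomology sheaves $\HH^k(M)$, each of which is a coherent $\mathcal{A}$-module with set-theoretic support in $Z$. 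Hence it is enough to treat the case $M \in \coh_Z(\mathcal{A})$.

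For such $M$, the key observation is that $\mathcal{I}^n M = 0$ for some $n \geq 0$. Indeed, by Lemma~\ref{l:A-vs-O-qcoh-coh}\ref{enum:Ac=Oc} the module $M$ is coherent as an $\mathcal{O}_X$-module, so this is the standard fact from commutative algebra on the noetherian scheme $X$; note that the action of $\mathcal{I}$ on $M$ makes sense and commutes with the $\mathcal{A}$-action because $\mathcal{O}_X \to \mathcal{A}$ lands in the center, so $\mathcal{I}\mathcal{A}$ is a two-sided ideal and each $\mathcal{I}^k M$ is a coherent $\mathcal{A}$-submodule of $M$. The finite filtration
\begin{equation*}
  0 = \mathcal{I}^n M \subset \mathcal{I}^{n-1}M \subset \cdots \subset \mathcal{I} M \subset M
\end{equation*}
has successive quotients $\mathcal{I}^k M/\mathcal{I}^{k+1}M$ that are coherent $\mathcal{A}$-modules annihilated by $\mathcal{I}$, hence are of the form $i_*N_k$ for some coherent $\mathcal{A}_Z$-module $N_k$ (this identifies $\mathcal{A}$-modules killed by $\mathcal{I}\mathcal{A}$ with $i_*$ of $\mathcal{A}_Z = i^*\mathcal{A}$-modules). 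Thus $M$ is built from $n$ iterated extensions of objects in the essential image of $i_*$, which puts $M$ in $\thick(i_*E)$ and concludes the proof.

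The only genuine point requiring care is the identification of $\mathcal{A}$-modules annihilated by $\mathcal{I}\mathcal{A}$ with $i_*$ of coherent $\mathcal{A}_Z$-modules; this is formal from $i^*\mathcal{A} = i^{-1}\mathcal{A}/i^{-1}(\mathcal{I})\cdot i^{-1}\mathcal{A}$ together with the closed-immersion adjunction $(i^*, i_*)$, but it is the one step where the noncommutativity of $\mathcal{A}$ must be handled (using that $\mathcal{I}\mathcal{A}$ is two-sided). Everything else is a routine adaptation of the classical commutative argument.
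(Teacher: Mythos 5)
Your proof is correct and follows essentially the same argument as the paper: reduce by truncation to a single coherent $\mathcal{A}$-module supported on $Z$, observe $\mathcal{I}^n M = 0$ using $\mathcal{O}_X$-coherence, and filter by powers of $\mathcal{I}$ so that the subquotients are pushed forward from $\coh(\mathcal{A}_Z)$. You spell out more explicitly than the paper (which just cites Stacks Project tags) both why the essential image of $i_*$ lies in $\thick(i_*E)$ and why the $\mathcal{I}$-trivial subquotients come from $\mathcal{A}_Z$-modules, including the two-sidedness of $\mathcal{I}\mathcal{A}$; these are exactly the points the paper leaves to the reader.
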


\begin{proof}
  Note that $i_*E$ is an object of $\D^\bd_Z(\coh(\mathcal{A}))$.
  Let $\mathcal{U}$ be the thick subcategory of
  $\D^\bd_Z(\coh(\mathcal{A}))$ generated by $i_*E$.
  We need to show
  $\mathcal{U}=
  \D^\bd_Z(\coh(\mathcal{A}))$.

  Since any object of $\D^\bd_Z(\coh(\mathcal{A}))$ is build up
  from its finitely many non-zero cohomology
  modules, it is enough to show that
  any coherent $\mathcal{A}$-module $M$ with support in $Z$ is in
  $\mathcal{U}$. Note that $M$ is $\mathcal{O}_X$-coherent by
  Lemma~\ref{l:A-vs-O-qcoh-coh}.

  Let $\mathcal{I} \subset \mathcal{O}_X$ be the
  ($\mathcal{O}_X$-coherent)
  ideal sheaf of $Z$. Then there is some $n \in \bN$ such that
  $\mathcal{I}^n M=0$ (see \cite[\sptag{01Y9}]{stacks-project}).
  Hence $M$ has a finite filtration
  \begin{equation*}
    0=\mathcal{I}^n M \subset \mathcal{I}^{n-1} M \subset \dots
    \subset \mathcal{I}M \subset M
  \end{equation*}
  by coherent $\mathcal{A}$-modules. All subquotients
  $\mathcal{I}^iM/\mathcal{I}^{i+1}M$ are coherent
  $\mathcal{A}$-modules that are annihilated by $\mathcal{I}$.
  Hence all these subquotients are in $\mathcal{U}$ (cf.\
  proofs of \cite[\sptag{087T}, \sptag{01QY}]{stacks-project})
  and so is $M$.
\end{proof}

\begin{proposition}
  \label{p:glue-generator-open-closed}
  Let $X$ be a
  noetherian scheme and
  $\mathcal{A}$ a
  coherent $\mathcal{O}_X$-algebra.
  Let $U$ be an open subscheme of $X$ and let $Z$ be a closed
  subscheme of $X$ such that $Z :=X-U$ as sets.
  Let $\mathcal{A}_Z:=i^*\mathcal{A}$ where $i \colon Z \ra X$
  is the inclusion.
  If
  $\D^\bd(\coh(\mathcal{A}_Z))$
  and
  $\D^\bd(\coh(\mathcal{A}|_U))$
  each have a classical generator, then
  $\D^\bd(\coh(\mathcal{A}))$
  has a classical generator.

  More precisely,
  if $E$ is a classical generator of $\D^\bd(\coh(\mathcal{A}_Z))$
  and
  $F$ is a classical generator of $\D^\bd(\coh(\mathcal{A}|_U))$,
  then
  $i_*E \oplus \hat{F}$ is a classical generator
  of $\D^\bd(\coh(\mathcal{A}))$
  where $\hat{F}$
  is any object of
  $\D^\bd(\coh(\mathcal{A}))$ with $\hat{F}|_U \cong F$
  (such an object $\hat{F}$ exists by
  Theorem~\ref{t:verdier-open-closed-cohA}).
\end{proposition}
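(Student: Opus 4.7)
The plan is to combine the Verdier localization sequence of Theorem~\ref{t:verdier-open-closed-cohA} with the two already-proved propositions on generators in such a sequence (Proposition~\ref{p:verdier-seq-generators}) and on generators of the subcategory supported on $Z$ (Proposition~\ref{p:generator-support-Z}). All the genuine work has already been done; this proposition is essentially an assembly.

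First, I would invoke Theorem~\ref{t:verdier-open-closed-cohA} to obtain the Verdier localization sequence
\begin{equation*}
  \D^\bd_Z(\coh(\mathcal{A}))
  \ra
  \D^\bd(\coh(\mathcal{A}))
  \xra{(-)|_U}
  \D^\bd(\coh(\mathcal{A}|_U)).
\end{equation*}
In particular the restriction functor is a Verdier quotient (hence essentially surjective up to isomorphism), so the object $\hat F \in \D^\bd(\coh(\mathcal{A}))$ with $\hat F|_U \cong F$ exists, justifying the parenthetical remark in the statement.

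Next, by Proposition~\ref{p:generator-support-Z} the pushforward $i_*E$ is a classical generator of $\D^\bd_Z(\coh(\mathcal{A}))$, since $E$ is a classical generator of $\D^\bd(\coh(\mathcal{A}_Z))$ and $i \colon Z \hookrightarrow X$ is the closed immersion (note that the proposition applies to the closed subscheme $Z$, whatever scheme structure we chose on the underlying closed set $X - U$).

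Finally, I would apply Proposition~\ref{p:verdier-seq-generators} to the above Verdier localization sequence with the role of ``$E$'' played by $i_*E$ (a classical generator of the kernel category) and the role of ``$F$'' played by $\hat F$ (an object of the middle category whose image $\hat F|_U \cong F$ is a classical generator of the quotient). The proposition then gives that $i_*E \oplus \hat F$ is a classical generator of $\D^\bd(\coh(\mathcal{A}))$, which is exactly what we want. There is no real obstacle: the only thing to be a little careful about is that the essentially surjective lift $\hat F$ of $F$ genuinely exists, but this follows immediately from the equivalence between the Verdier quotient and $\D^\bd(\coh(\mathcal{A}|_U))$ supplied by Theorem~\ref{t:verdier-open-closed-cohA}.
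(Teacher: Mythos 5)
Your proof is correct and follows exactly the same route as the paper: use Proposition~\ref{p:generator-support-Z} to see that $i_*E$ classically generates $\D^\bd_Z(\coh(\mathcal{A}))$, then feed the Verdier localization sequence of Theorem~\ref{t:verdier-open-closed-cohA} together with $i_*E$ and $\hat F$ into Proposition~\ref{p:verdier-seq-generators}.
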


\begin{proof}
  Note that $i_*E$ is a classical generator of
  $\D^\bd_Z(\coh(\mathcal{A}))$ by
  Proposition~\ref{p:generator-support-Z}.
  Hence the proposition follows from
  Proposition~\ref{p:verdier-seq-generators}
  applied to the Verdier localization sequence from
  Theorem~\ref{t:verdier-open-closed-cohA}.
\end{proof}

\subsection{Local existence of a classical generator}
\label{sec:local-exist-class}

Given a ring $A$ (unital, associative, but not necessarily
commutative), we denote its center by $\Z(A)$.

\begin{definition}[{cf.\ e.\,g.\
    \cite[p.~95]{knus-ojanguren-descente-azumaya}, \cite[13.7.6]{McCRob}}]
  \label{d:azumaya}
  An \define{Azumaya algebra} (over its center) is a ring $A$
  that satisfies the following two conditions:
  \begin{enumerate}
  \item As a $\Z(A)$-module, $A$ is finitely generated projective.
  \item
    The ring map
    \begin{align*}
      \eta_A \colon A \otimes_{\Z(A)} A^\opp & \ra \End_{\Z(A)}(A),\\
      a \otimes b & \mapsto (x \mapsto axb),
    \end{align*}
    is an isomorphism
    where
    $\End_{\Z(A)}(A)$ is the ring of $\Z(A)$-module
    endomorphisms of $A$.
  \end{enumerate}
\end{definition}

We refer the reader to
\cite[Thm.~III.5.1]{knus-ojanguren-descente-azumaya}
or \cite[Thm.~2.1]{auslander-goldman-brauer} for equivalent
conditions characterizing Azumaya algebras; for example, a ring
is an Azumaya algebra if and only if it is separable as an
algebra over its center.

If $X$ is a scheme, we denote its regular locus by $X_\reg$.
\begin{definition}[{\cite[\sptag{07R3}]{stacks-project}}]
  \label{d:j2}
  A scheme $X$ is called \define{J-2} if it is locally noetherian
  and if for every morphism $Y \ra X$ locally of finite type the
  set $Y_\reg$ is open in $Y$.
\end{definition}

We refer the reader to \cite[\sptag{07R2}]{stacks-project} for
basic properties and examples of J-2 schemes. Important examples of J-2
schemes are schemes locally of finite type over a field or
schemes locally of finite type over the integers.

Recall that a commutative ring $R$ is called \textit{regular} if
it is noetherian and all localizations $R_\mfp$ at prime ideals
$\mfp$ are regular local rings (see
\cite[\sptag{00OD}]{stacks-project}).

The following proposition is the key for several later results
we prove by noetherian induction.

\begin{proposition}
  \label{p:locally-nilpotently-azumaya}
  Let $X$ be a non-empty J-2 scheme.
  Let $\mathcal{A}$ be a coherent $\mathcal{O}_X$-algebra (which is not
  assumed to be commutative).
  Then there exist a non-empty affine open
  subset $U$ of $X$ and a nilpotent two-sided ideal $I$ of
  $\mathcal{A}(U)$
  and
  $\mathcal{O}_X(U)$-algebras $A_1, \dots, A_r$, for some $r
  \in \bN$, such that each $A_i$ is an Azumaya algebra over its
  center $\Z(A_i)$, each center $\Z(A_i)$ is a regular
  ring, and
  there is an isomorphism
  \begin{equation*}
    \mathcal{A}(U)/I \cong A_1 \times \dots \times A_r
  \end{equation*}
  of $\mathcal{O}_X(U)$-algebras.
\end{proposition}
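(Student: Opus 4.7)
The plan is to work at a generic point of $X$, apply Artin--Wedderburn modulo the Jacobson radical, spread the resulting decomposition out to an affine open, and finally shrink using the J-2 hypothesis to arrange regularity of centers and the Azumaya property. First I would shrink $X$ to a non-empty affine open $\Spec R$ with $R$ noetherian and $A := \mathcal{A}(\Spec R)$ a finite $R$-algebra (by coherence of $\mathcal{A}$, see Section~\ref{sec:affine-situation}), then further shrink to an irreducible component so that $R$ has a unique minimal prime $\mathfrak{p}$ with residue field $K := \kappa(\eta)$. The localization $R_\mathfrak{p}$ is Artinian local, so $A_\mathfrak{p}$ is Artinian; its Jacobson radical $J$ is therefore a nilpotent two-sided ideal, and Artin--Wedderburn yields
\[
A_\mathfrak{p}/J \;\cong\; \M_{n_1}(D_1) \times \cdots \times \M_{n_r}(D_r),
\]
with each $D_i$ a finite-dimensional division $K$-algebra. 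The factors correspond to a complete system $e_1,\dots,e_r$ of pairwise orthogonal central idempotents in $A_\mathfrak{p}/J$ summing to $1$.

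Next I would spread this structure out and then impose regularity and the Azumaya property. Fix finitely many generators $j_1,\dots,j_k \in A$ for a two-sided ideal $I'_0 \subset A$ with $I'_0 A_\mathfrak{p} = J$ (possible since $A$ is noetherian), and pull the $e_i$ back to elements $y_i \in A/I'_0$. Each of the finitely many identities in $A_\mathfrak{p}/J$ needed --- $(I'_0)^N = 0$ for some $N$; $y_i^2 = y_i$; $y_iy_j = 0$ for $i \neq j$; $\sum y_i = 1$; and $[y_i, a_\alpha] = 0$ for a finite $R$-algebra generating set $a_\alpha$ of $A$ --- involves finitely generated $R$-modules and so holds after inverting a single $f \in R \setminus \mathfrak{p}$. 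After this replacement we obtain a nilpotent two-sided ideal $I' \subset A$ and a product decomposition $A/I' \cong A_1 \times \cdots \times A_r$ with $A_i := y_i(A/I')$ having fiber $\M_{n_i}(D_i)$ at $\eta$; in particular $\Z(A_i)$ is a finite commutative $R$-algebra with fiber the field $\Z(D_i)$, and has a unique minimal prime (since $\mathfrak{p}$ is the unique minimal prime of $R$). Enlarging $I'$ to $I := I' + \sum_i \nil(\Z(A_i)) A_i$, still nilpotent since each $\nil(\Z(A_i))$ is central and nilpotent, replaces each $\Z(A_i)$ by its reduction, so $\Z(A_i)$ is now an integral domain with fraction field $\Z(D_i)$. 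The morphism $\Spec \Z(A_i) \to \Spec R \to X$ is locally of finite type, so by J-2 the non-regular locus of $\Spec \Z(A_i)$ is closed; it is proper since the generic point is a field, and its image in $\Spec R$ is closed (finite morphism). Shrinking $R$ to avoid these finitely many images leaves each $\Z(A_i)$ regular. A final shrink arranges that $A_i$ is finitely generated projective over $\Z(A_i)$ and that $A_i \otimes_{\Z(A_i)} A_i^\opp \to \End_{\Z(A_i)}(A_i)$ is an isomorphism --- both conditions hold at $\eta$, where $\M_{n_i}(D_i)$ is central simple over $\Z(D_i)$, and both are open conditions on $\Spec \Z(A_i)$ because the relevant modules are coherent --- so each $A_i$ is Azumaya over its regular center.

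The hard part will be the spreading-out step: one must check that the finitely many equations needed at $\mathfrak{p}$, including centrality of each idempotent against every one of a finite set of generators of $A$, together with the nilpotency of the lifted ideal, all survive inversion of a single element $f \in R \setminus \mathfrak{p}$. Once this is done, the remaining reductions (absorbing nilradicals into $I$, and removing closed images of non-regular and non-Azumaya loci) are routine noetherian openness arguments combined with the J-2 hypothesis at the crucial step.
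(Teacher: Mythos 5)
Your strategy is, at its core, the same as the paper's: reduce to an affine irreducible base, apply Artin--Wedderburn to the algebra modulo its Jacobson radical at the generic point, spread the decomposition out by lifting central idempotents and controlling finitely many identities, and finally use the J-2 hypothesis together with openness of the flat/iso loci to arrange regular centers and the Azumaya condition. The one genuine structural deviation is that you work with $A_\mathfrak{p}$ over the Artinian local ring $R_\mathfrak{p}$, letting the nilradical of $R$ get absorbed into the Jacobson radical $J$ of $A_\mathfrak{p}$, whereas the paper first passes to the integral domain $R/\nil(R)$ and uses generic freeness to make $A$ finite free before forming $A_K$ over $K=\Quot(R)$. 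The paper's extra reductions buy it the inclusion $\Z(A) \hookrightarrow \Z(A)\otimes_R K = \Z(A_K)$, from which $\Z(A_i) \hookrightarrow \Z(B_i)$ is visibly a domain. Your route skips those reductions but produces one unjustified step.

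The gap: you write that $\Z(A_i)$ ``has a unique minimal prime (since $\mathfrak{p}$ is the unique minimal prime of $R$)'' and then that its reduction is a domain with fraction field $\Z(D_i)$. This inference does not hold as stated. A finite commutative algebra over a ring with a unique minimal prime and a one-point generic fiber can still have extra minimal primes supported over the non-generic locus (for a local domain $(R,\mathfrak{m})$, the ring $R \times R/\mathfrak{m}$ has generic fiber $K$ but two minimal primes). Fortunately, your argument does not need $\Z(A_i)$ to be a domain. Once $\Z(A_i)$ is reduced, the unique prime $\mathfrak{q}$ of $\Spec\Z(A_i)$ lying over $\mathfrak{p}$ has local ring $\Z(A_i)_\mathfrak{q} \cong \Z(D_i)$, a field, hence is a regular point. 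By J-2 the non-regular locus is closed; it misses $\mathfrak{q}$, so its image under the finite map $\Spec\Z(A_i) \ra \Spec R$ is a closed set missing the generic point of $\Spec R$, and one shrinks $\Spec R$ accordingly. The stray components of $\Spec\Z(A_i)$ not dominating $\Spec R$ disappear in the same shrinking. With that correction your argument is sound and agrees in substance with the paper's, just with slightly different bookkeeping in the reduction steps.
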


\begin{proof}
  Without loss of generality we may and will impose the following
  additional assumptions.
  \begin{enumerate}[label=(\roman*)]
  \item
    $X$ is irreducible.

    Indeed, any J-2 scheme is by definition locally noetherian,
    so we may
    assume that $X$ is noetherian. Then $X$ has only finitely many
    irreducible components.
    Choose one irreducible component and consider the
    complement in $X$ of the union of the other irreducible
    components. This is an irreducible open subscheme of $X$, and
    we may replace $X$ by this irreducible open subscheme and
    $\mathcal{A}$ by its restriction to this open subscheme.
  \item
    $X$ is affine, say $X=\Spec R$ where $R$ is a J-2
    ring.

    Indeed, just replace $X$ by a non-empty affine open subscheme
    (which is the spectrum of a J-2 ring, by
    \cite[\sptag{07R4}]{stacks-project})
    and $\mathcal{A}$ by its restriction to this open subscheme.
  \end{enumerate}
  Set $A = \mathcal{A}(X)$.
  Then $A$ is a finite algebra\footnote{Note that $A$ is not
    assumed to be commutative, but we extend the usual
    commutative algebra terminology in
    the obvious way: $A$ is an $R$-algebra which is finitely
    generated as an $R$-module.}
  over the noetherian ring $R$,
  and $\mathcal{A}$ is the corresponding coherent
  $\mathcal{O}_X$-algebra.
  In the following, we often
  work with $R$ and $A$ instead of $X$ and $\mathcal{A}$
  and use results from \ref{sec:affine-situation} without
  mentioning this explicitly. If we
  say that the statement of the proposition is true for the pair
  $(R,A)$ we mean that it is true for $\Spec R$ and the
  $\mathcal{O}_X$-algebra associated to $A$.

  Without loss of generality we may and will impose the following
  additional assumptions.
  \begin{enumerate}[resume,label=(\roman*)]
  \item $X=\Spec R$ is reduced (and hence integral), i.\,e.\ $R$
    is an integral
    domain.

    Indeed, let $\nil(R)$ be the nilradical of $R$. It is
    a nilpotent ideal because $R$ is
    noetherian. The two-sided ideal $\langle
    \nil(R)\rangle$ generated by
    $\nil(R)$ in $A$ is then also nilpotent.
    Hence, if the proposition is true for
    the reduced ring $R/\nil(R)$
    (corresponding to the underlying reduced scheme $X_\red=\Spec
    R/\nil(R)$)
    and the
    $R/\nil(R)$-algebra $A/\langle \nil(R) \rangle = A \otimes_R
    (R/\nil(R))$, then it is also true for $R$ and the $R$-algebra
    $A$.
  \item
    \label{enum:A-finite-free}
    $A$ is a finite free $R$-module.

    Indeed, by generic freeness,
    $X=\Spec R$ contains a non-empty open subscheme such
    that the restriction of $\mathcal{A}$ to this subscheme is
    finite free as a module over the structure sheaf
    \cite[Lemma~10.81]{goertz-wedhorn-AGI}. Hence there is a
    non-zero element $s \in R$ such that $A_s$ is a free
    $R_s$-module, and we can replace the pair $(R, A)$ by the
    pair $(R_s, A_s)$.
  \item
    \label{enum:R-subset-A}
    The structure morphism $R \ra A$ is injective, i.\,e.\ $R
    \subset A$.

    Indeed, all claims of the proposition are obvious if $A=0$.
    Otherwise, the structure morphism is injective since $A$ is a
    free module over the integral domain $R$.
  \end{enumerate}
  Let $K=\Quot(R)$ be the field of fractions of
  $R$. Geometrically, it is the stalk of the
  structure sheaf $\mathcal{O}_X$ at the generic point of
  the integral scheme $X=\Spec R$.
  Then $A_K:= A \otimes_R K$ is a finite-dimensional $K$-algebra.

  Note that $K$ is the localization of the integral domain $R$ at
  $S:=R-\{0\}$, that $A_K$ is the localization of $A$ at
  the central subset $S$, and that $A$ is torsion-free over
  $R$. Hence we
  have the following commutative diagram of inclusions.
  \begin{equation*}
    \xymatrix{
      {A}
      \ar@{}[r]|-{\subset}
      &
      {A_K}
      \\
      {R}
      \ar@{}[r]|-{\subset}
      \ar@{}[u]|-{\cup}
      &
      {K}
      \ar@{}[u]|-{\cup}
    }
  \end{equation*}
  Without loss of generality we may and will impose the following
  additional assumptions.
  \begin{enumerate}[resume,label=(\roman*)]
  \item $A_K$ is a semisimple $K$-algebra.

    Indeed, let
    $\rad(A_K)$ be the Jacobson radical of $A_K$. This is a
    nilpotent two-sided ideal of $A_K$, and $\frac{A_K}{\rad(A_K)}$ is a
    semisimple $K$-algebra.
    Moreover, $A \cap \rad(A_K)$ is a nilpotent two-sided ideal of $A$,
    and we may expand the above diagram to the following commutative diagram.
    \begin{equation*}
      \xymatrix{
        {A':=\frac{A}{A \cap \rad(A_K)}}
        \ar@{}[r]|-{\subset}
        &
        {\frac{A_K}{\rad(A_K)}}
        \\
        {A}
        \ar@{}[r]|-{\subset}
        \ar@{->>}[u]
        &
        {A_K}
        \ar@{->>}[u]
        \\
        {R}
        \ar@{}[r]|-{\subset}
        \ar@{}[u]|-{\cup}
        &
        {K}
        \ar@{}[u]|-{\cup}
      }
    \end{equation*}
    Note that $A'_K:=A' \otimes_R K = \frac{A_K}{\rad(A_K)}$ canonically
    as $K$-algebras.
    Hence, if the proposition is true for the pair $(R, A')$,
    it is also true for the pair $(R, A)$.
    Replacing $A$ by $A'$ may however destroy the assumption
    \ref{enum:A-finite-free}

    Before taking care of this, let us consider assumption
    \ref{enum:R-subset-A}.
    Since $R$ is an integral domain, its intersection with
    the nilpotent ideal $\rad(A_K)$ is $\{0\}$, so we may view $R$
    as a subring of $A'$, i.\,e.\ \ref{enum:R-subset-A} holds for
    the $R$-algebra $A'$ mutatis mutandis.

    Let us explain how to deal with
    assumption \ref{enum:A-finite-free}.
    Generic freeness provides a non-zero element $s \in R$ such
    that $(A')_s$ is a
    finite free $R_s$-module. Observe that
    \begin{equation*}
      (A')_s
      =
      \Big(\frac{A}{(A \cap \rad(A_K))}\Big)_s
      =
      \frac{A_s}{(A \cap \rad(A_K))_s}
    \end{equation*}
    and that $(A \cap \rad(A_K))_s=A_s \cap \rad(A_K)$ is a
    nilpotent two-sided ideal of $A_s$. Moreover,
    $((A')_s)_K=A'_K=\frac{A_K}{\rad(A_K)}$ is a semisimple
    $K$-algebra.

    Hence we may replace the pair $(R, A)$ without loss of
    generality by the pair $(R_s, (A')_s)$.
  \end{enumerate}
  Since $A_K$ is a finite-dimensional, semisimple $K$-algebra, it
  decomposes as a finite product
  \begin{equation}
    \label{eq:AK-product-simple}
    A_K = B_1 \times \dots \times B_r
  \end{equation}
  of finite-dimensional, simple $K$-algebras $B_1, \dots, B_r$,
  for some $r \in \bN$.

  Without loss of generality we may and will impose the following
  additional assumption.
  \begin{enumerate}[resume,label=(\roman*)]
  \item
    The decomposition~\eqref{eq:AK-product-simple} comes from a
    decomposition
    \begin{equation*}
      A = A_1 \times \dots \times A_r
    \end{equation*}
    of rings by extension of scalars along $R \hra K$, i.\,e.\
    $A_i \otimes_R K=B_i$ for all $i \in \{1, \dots, r\}$.

    Indeed, let $e_i$ be the central idempotent of $A_K$ such
    that $B_i=e_iA_K$. These idempotents satisfy $e_i e_j
    =\delta_{ij}e_i$ and
    $\sum e_i=1$.
    Then $e_i=\frac{a_i}{s_i}$ for some
    elements $a_i \in A$ and $s_i \in R-\{0\}$. Setting $s=s_1
    \cdots s_r\not=0$ we obtain
    the decomposition $A_s = e_1A_s \times \dots \times e_rA_s$ as
    rings. By construction, the scalar extension of this
    decomposition is
    the decomposition~\eqref{eq:AK-product-simple}.
    Hence we may replace the pair $(R,A)$ by the pair $(R_s,
    A_s)$.
  \end{enumerate}
  Note that $\Z(A) = \Z(A_1) \times \dots \Z(A_r)$ and hence
  geometrically
  \begin{equation*}
    \Spec \Z(A) = \bigsqcup \Spec \Z(A_i) \ra \Spec R.
  \end{equation*}
  The $\Z(A)$-module $A$, viewed as a coherent module on
  $\Spec(\Z(A))$, decomposes
  accordingly: its restriction to each $\Spec \Z(A_i)$ is the
  $Z(A_i)$-module $A_i$.

  Since $A$ is a finite algebra over the noetherian ring $R$, its
  center $Z(A)$ is a finite commutative $R$-algebra. Since $R$ is
  J-2, the regular locus $(\Spec Z(A))_\reg$ is open in $\Spec
  Z(A)$, and
  $(\Spec Z(A_i))_\reg$ is open in $\Spec Z(A_i)$, for each $i
  \in \{1, \dots, r\}$.

  Without loss of generality we may and will impose the following
  additional assumption.
  \begin{enumerate}[resume,label=(\roman*)]
  \item
    The center $\Z(A_i)$ of $A_i$ is a regular ring and
    $A_i$ is a finite free $\Z(A_i)$-module,
    for all $i=\{1, \dots, r\}$.

    Indeed, since the finite free $R$-module $A$ is torsion-free
    over $R$, we have
    \begin{equation}
      \label{eq:ZA}
      \Z(A) \subset \Z(A) \otimes_R K = \Z(A \otimes_R K)
      =\Z(A_K)
    \end{equation}
    and hence
    $Z(A_i) \subset Z(B_i)$
    for each $i$. We fix
    $i$ for a moment.
    Since $B_i$ is a matrix ring over a division ring,
    its center
    $\Z(B_i)$ is a field, so its subring $\Z(A_i)$ is an integral
    domain.
    In particular, the generic point of $\Spec Z(A_i)$ is
    regular. This shows that
    the open regular locus $(\Spec Z(A_i))_\reg$ of $\Spec Z(A_i)$ is
    non-empty.

    By generic freeness, there is a non-empty open subset $V_i$
    of $(\Spec Z(A_i))_\reg$ such that $A_i|_{V_i}$ is a finite
    free $\Z(A_i)$-module.
    Let $f \colon \Spec \Z(A_i) \ra \Spec R$ be the finite
    morphism corresponding to the finite ring morphism
    $R \ra \Z(A_i)$ and consider the proper closed subset
    $C_i:=(\Spec \Z(A_i)) - V_i$ of $\Spec \Z(A_i)$.
    Since finite morphisms are closed
    and the
    generic point of $\Spec \Z(A_i)$ is the only point of
    $\Spec \Z(A_i)$ whose image under $f$ is the generic point of
    $\Spec R$, by \cite[Cor.~5.9]{atiyah-macdonald},
    the set $f(C_i)$ is a proper closed subset of $\Spec R$.
    Choose $s_i \in R-\{0\}$ such that $\Spec R_{s_i} \subset
    (\Spec R)-f(C_i)$.
    Then
    $\Z(A_i)_{s_i}$ is a regular ring
    and
    $(A_i)_{s_i}$ is a
    finite free $\Z(A_i)_{s_i}$-module. We find such an element
    $s_i$ for all $i \in \{1, \dots, r\}$.

    Set $s=s_1 \cdots s_r \in R-\{0\}$ and observe
    that $\Z(A_s)= (\Z(A))_s$ and $\Z((A_i)_s)=\Z(A_i)_s$.
    Hence, by replacing $R$ by
    $R_s$ and $A$ by $A_s$
    we can
    and will assume (in addition to the assumptions already
    imposed) that
    $\Z(A_i)$ is a regular ring and that $A_i$ is a finite free
    $\Z(A_i)$-module, for each $i \in \{1, \dots, r\}$.
  \end{enumerate}

  For any $i \in \{1,\dots, r\}$ consider the morphism
  \begin{equation*}
    \eta_{A_i} \colon A_i \otimes_{\Z(A_i)} A_i^\opp \ra \End_{\Z(A_i)}(A_i).
  \end{equation*}
  Its scalar extension
  \begin{equation*}
    \eta_{A_i} \otimes_R K \colon (A_i \otimes_{\Z(A_i)} A_i^\opp) \otimes_R
    K \ra \End_{\Z(A_i)}(A_i) \otimes_R K
  \end{equation*}
  identifies (note that $A_i$ is a finite free $\Z(A_i)$-module)
  via the canonical isomorphisms with
  \begin{equation*}
    \eta_{A_i \otimes_R K}
    \colon
    (A_i \otimes_R K)
    \otimes_{\Z(A_i) \otimes_R K} (A_i \otimes_R K)^\opp
    \ra \End_{\Z(A_i) \otimes_R K}(A_i \otimes_R K).
  \end{equation*}
  Using $A_i \otimes_R K = B_i$ and
  $\Z(A_i) \otimes_R K = \Z(B_i)$ (obtained from \eqref{eq:ZA})
  this morphism is the morphism
  \begin{equation*}
    \eta_{B_i} \colon B_i \otimes_{\Z(B_i)} B_i^\opp \ra \End_{\Z(B_i)}(B_i).
  \end{equation*}
  Since $B_i$ is a matrix ring with entries in a division ring $D_i$
  that has finite dimension over the field $\Z(B_i)=\Z(D_i)$, $B_i$ is a
  separable $\Z(B_i)$-algebra, i.\,e.\ an Azumaya algebra (see
  \cite[Thms.~III.3.1 and III.5.1]{knus-ojanguren-descente-azumaya}).
  This means that $\eta_{B_i}=\eta_{A_i \otimes_R K}$ is an isomorphism.

  Note that source and target of $\eta_{A_i}$ are finite free
  $\Z(A_i)$-modules.
  In particular, $\eta_{A_i}$ may be viewed as a morphism of
  coherent $\mathcal{O}_{\Spec A}$-modules which is an
  isomorphism at the generic point. Hence it is already an
  isomorphism on an open neighborhood of the generic point,
  i.\,e.\
  there is an element $s_i \in R-\{0\}$ such that
  $\eta_{A_i} \otimes_R R_{s_i}$ is an isomorphism.
  Since we already know that
  $(A_i)_{s_i}$ is a finite free module over
  $\Z((A_i)_{s_i})=\Z(A_i)_{s_i}$ we see that $(A_i)_{s_i}$ is an
  Azumaya algebra over its center.

  As above, we may set $s=s_1, \dots, s_r$ and replace
  $R$ by $R_s$ and $A$ by $A_s$ without destroying our previous
  assumptions.
  Then $A=A_1 \times \dots \times
  A_r$ as $R$-algebras where each $A_i$ is an Azumaya algebra
  over its center $\Z(A_i)$ which is a regular ring.
  This finishes the proof of the proposition.
\end{proof}

\begin{lemma}
  \label{l:Azumaya-regular-center-pdim}
  Let $A$ be an Azumaya algebra whose center $\Z(A)$ is a regular
  ring. If $M$ is any finitely generated
  $A$-module then its projective dimension $\pdim_A M$ is finite.
\end{lemma}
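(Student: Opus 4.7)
My plan is to bound $\pdim_A M$ by $\pdim_R M$, where $R := \Z(A)$, and then invoke the finiteness of projective dimensions over regular noetherian rings.

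First, I observe that since $A$ is a finitely generated $R$-module (as $A$ is Azumaya), any finitely generated $A$-module $M$ is also finitely generated as an $R$-module. Because $R$ is regular noetherian, the classical Auslander--Buchsbaum--Serre theorem, applied locally at each prime of $R$, together with a standard argument for finitely generated modules, yields $\pdim_R M =: n < \infty$.

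Next, I exploit the separability of $A$ over $R$, which follows from the Azumaya hypothesis (see the equivalent conditions alluded to just after Definition~\ref{d:azumaya}). This yields a separability idempotent $e = \sum_i x_i \otimes y_i \in A \otimes_R A$ with $\sum_i x_i y_i = 1$ satisfying the centrality property $\sum_i a x_i \otimes y_i = \sum_i x_i \otimes y_i a$ in $A \otimes_R A$ for every $a \in A$. Using this idempotent, I define
\begin{equation*}
  \sigma \colon M \ra A \otimes_R M,
  \qquad
  m \mapsto \sum_i x_i \otimes y_i m,
\end{equation*}
which is $A$-linear (where the $A$-module structure on $A \otimes_R M$ comes from the left tensor factor) by the centrality of $e$, and which splits the $A$-linear surjection $A \otimes_R M \ra M$, $a \otimes m \mapsto am$. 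Therefore $M$ is a direct summand of $A \otimes_R M$ in $\Mod(A)$.

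Finally, I take an $R$-projective resolution $P_\bullet \ra M$ of length $n$. Since $A$ is projective (hence flat) over $R$, the complex $A \otimes_R P_\bullet$ is an exact resolution of $A \otimes_R M$ by $A$-projective modules of length $n$, so $\pdim_A(A \otimes_R M) \le n$. Since $M$ is an $A$-direct summand of $A \otimes_R M$, this forces $\pdim_A M \le n < \infty$. The main obstacle, standard as it may be, lies in the first step: ensuring $\pdim_R M < \infty$ for a regular noetherian $R$ that could in principle have infinite Krull dimension. One reduces to the local case via $\pdim_R M = \sup_{\mathfrak{m}} \pdim_{R_{\mathfrak{m}}} M_{\mathfrak{m}}$, applies Auslander--Buchsbaum--Serre at each localization, and then uses the finite generation of $M$ (which controls its support) to verify that the supremum remains finite.
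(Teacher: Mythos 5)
Your argument is correct, and it follows the same overall route as the paper's proof: bound $\pdim_A M$ by $\pdim_R M$, then use regularity of $R$ to finish. The one genuine difference is that where the paper simply cites Auslander--Goldman (Thms.~1.8 and 2.1 of \cite{auslander-goldman-brauer}) for the inequality $\pdim_A M \le \pdim_R M$, you unwind that citation into a self-contained argument: $A$ separable over $R$ gives a separability idempotent, which exhibits $M$ as an $A$-direct summand of $A \otimes_R M$, and then $R$-flatness and $R$-projectivity of $A$ do the rest. That is exactly the mechanism behind the Auslander--Goldman bound, so the two proofs are mathematically the same; yours is more explicit and self-contained, the paper's is shorter. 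One small imprecision in your final paragraph: the reason $\pdim_R M$ is finite when $R$ has possibly infinite Krull dimension is not really that finite generation ``controls the support'' of $M$ (the support may well be all of $\Spec R$). The correct justification is that for finitely generated $M$ the loci $\{\mathfrak{p} : \pdim_{R_\mathfrak{p}} M_\mathfrak{p} \le n\}$ are open, they cover $\Spec R$ by Auslander--Buchsbaum--Serre applied at each prime, and quasi-compactness of the noetherian space $\Spec R$ then gives a uniform bound. This is the argument the paper delegates to \cite[Prop.~A.2]{daniel-valery-olaf-geometricity}, so you are invoking the same fact, just with a slightly off explanation of why it holds.
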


\begin{proof}
  Let $M$ be a finitely generated $A$-module.
  Then
  $\pdim_A M \leq \pdim_{Z(A)} M$
  by
  \cite[Thms.~1.8 and 2.1]{auslander-goldman-brauer} (with $\Delta=R$
  there).
  Note that $M$ is a finitely generated $\Z(A)$-module since, by
  definition of an Azumaya algebra, $A$
  is a finitely generated $\Z(A)$-module.
  Now observe that this implies
  $\pdim_{Z(A)} M < \infty$ since $Z(A)$ is a regular
  ring
  (as explained in the proof of
  \cite[Prop.~A.2]{daniel-valery-olaf-geometricity}).
\end{proof}

\begin{remark}
  There are regular
  commutative rings of
  infinite global dimension which have the property that every
  finitely generated module has finite projective dimension; an
  example is due to Nagata, see
  \cite[Exercise~11.4]{atiyah-macdonald},
  \cite[Example~7.7.2]{McCRob}.
\end{remark}

\begin{lemma}
  \label{l:classical-generator-DbmodA-Azumaya-quotient}
  Let $A$ be a noetherian ring (not assumed to be
  commutative). If $I \subset A$ is a nilpotent two-sided
  ideal such that
  each finitely generated $A/I$-module has
  finite projective
  dimension over $A/I$, then $A/I$ is a classical generator
  of $\D^\bd(\mod(A))$.
\end{lemma}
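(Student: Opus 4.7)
The goal is to show $\thick_{\D^\bd(\mod(A))}(A/I)=\D^\bd(\mod(A))$. The plan is first to reduce from complexes to modules, then from finitely generated $A$-modules to finitely generated $A/I$-modules via the nilpotent filtration by powers of $I$, and finally to handle finitely generated $A/I$-modules using the finite projective dimension hypothesis.

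First, since every object of $\D^\bd(\mod(A))$ is built out of its finitely many nonzero cohomology modules via distinguished triangles coming from truncations, it suffices to show that every finitely generated $A$-module $M$ lies in $\thick_{\D^\bd(\mod(A))}(A/I)$.

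Next, because $I$ is nilpotent, there is some $n$ with $I^n=0$, so the filtration
\begin{equation*}
  M\supset IM\supset I^2M\supset\dots\supset I^{n-1}M\supset I^nM=0
\end{equation*}
is finite. Each subquotient $I^kM/I^{k+1}M$ is a finitely generated $A$-module annihilated by $I$, hence a finitely generated $A/I$-module. The short exact sequences $0\to I^{k+1}M\to I^kM\to I^kM/I^{k+1}M\to 0$ in $\mod(A)$ give distinguished triangles in $\D^\bd(\mod(A))$, so by descending induction on $k$ (starting from $I^nM=0$) it is enough to prove that each finitely generated $A/I$-module lies in $\thick_{\D^\bd(\mod(A))}(A/I)$.

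The final step uses the hypothesis. Since $A$ is noetherian and $I$ is a two-sided ideal, $A/I$ is noetherian. The exact restriction-of-scalars functor along $A\to A/I$ induces a triangulated functor $\D^\bd(\mod(A/I))\to\D^\bd(\mod(A))$ sending $A/I$ to $A/I$, so it suffices to show that any finitely generated $A/I$-module $N$ lies in $\thick_{\D^\bd(\mod(A/I))}(A/I)$. By hypothesis $N$ has finite projective dimension over $A/I$, and since $A/I$ is noetherian it admits a finite resolution by finitely generated projective $A/I$-modules. Each such projective is a direct summand of a finitely generated free $A/I$-module, hence lies in $\thick(A/I)$; using the resolution as a bounded complex representing $N$ in $\D^\bd(\mod(A/I))$ (and truncating, which is allowed since the resolution is finite), we conclude $N\in\thick_{\D^\bd(\mod(A/I))}(A/I)$.

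There is no real obstacle here: the only point to handle carefully is making sure the restriction functor is well-defined on bounded derived categories of finitely generated modules, which is automatic because restriction along $A\to A/I$ is exact and preserves finite generation.
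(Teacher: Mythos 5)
Your proof is correct and follows essentially the same strategy as the paper's: reduce from bounded complexes to their cohomology modules, use the finite filtration by powers of the nilpotent ideal $I$ to reduce to modules annihilated by $I$, and then invoke the finite projective dimension hypothesis. The paper states the three steps in the reverse order (modules annihilated by $I$, then general modules via the filtration, then complexes), and it does not explicitly spell out the restriction-of-scalars reduction to $\thick_{\D^\bd(\mod(A/I))}(A/I)$ that you make precise, but the underlying argument is the same.
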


\begin{proof}
  Let $\mathcal{S}$ be the thick
  subcategory of $\D^\bd(\mod(A))$ generated by
  $A/I$.
  By assumption, every finitely generated
  $A/I$-module has
  a finite
  resolution by finitely generated projective $A/I$-modules.
  Hence every finitely generated $A$-module that
  is annihilated by $I$ is contained in $\mathcal{S}$.

  Let $M$ be any finitely generated
  $A$-module.
  Since $I$ is nilpotent, say $I^n=0$ for some $n \in \bN$,
  $M$ has a finite filtration
  \begin{equation*}
    0=I^n M \subset I^{n-1}M \subset \dots \subset IM \subset M
  \end{equation*}
  by submodules.
  Each subquotient $I^iM/I^{i+1}M$ is annihilated by $I$ and
  finitely generated as a module over the noetherian ring
  $A$. Hence $I^iM/I^{i+1}M \in \mathcal{S}$ for all
  $i$ and therefore $M \in \mathcal{S}$.

  Since any object of $\D^\bd(\mod(A))$ is built up
  from its finitely many non-zero cohomology modules, which are
  finitely generated
  $A$-modules, we deduce that
  $\D^\bd(\mod(A))=\mathcal{S}$.
\end{proof}

\begin{proposition}
  \label{p:local-generator}
  Let $X$ be a non-empty J-2 scheme.
  Let $\mathcal{A}$ be a
  coherent $\mathcal{O}_X$-algebra (which is not
  assumed to be commutative).
  Then there exists a non-empty affine open
  subset $U$ of $X$
  and a nilpotent two-sided ideal sheaf $\mathcal{I} \subset
  \mathcal{A}|_U$
  such that
  $\mathcal{A}|_U/\mathcal{I}$ is a classical generator of
  $\D^\bd(\coh(\mathcal{A}|_U))$.
\end{proposition}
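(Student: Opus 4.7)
The proof essentially assembles the preceding machinery. First I would apply Proposition~\ref{p:locally-nilpotently-azumaya} to obtain a non-empty affine open subset $U = \Spec R$ of $X$, a nilpotent two-sided ideal $I \subset A := \mathcal{A}(U)$, and an isomorphism of $R$-algebras
\begin{equation*}
  A/I \cong A_1 \times \dots \times A_r
\end{equation*}
where each $A_i$ is Azumaya over its regular center $\Z(A_i)$. Let $\mathcal{I}$ be the two-sided ideal sheaf of $\mathcal{A}|_U$ corresponding to $I$ under Serre's equivalence; then $\mathcal{I}$ is nilpotent because $I$ is.

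The next step is to verify the hypothesis of Lemma~\ref{l:classical-generator-DbmodA-Azumaya-quotient}, namely that every finitely generated $A/I$-module has finite projective dimension over $A/I$. By the product decomposition, any finitely generated $A/I$-module $M$ splits canonically as $M = M_1 \times \dots \times M_r$ with each $M_i$ a finitely generated $A_i$-module. Lemma~\ref{l:Azumaya-regular-center-pdim} gives $\pdim_{A_i} M_i < \infty$ for each $i$, and finite projective resolutions of the $M_i$ assemble componentwise into a finite projective resolution of $M$ over $A/I$ (since the idempotents realizing the product decomposition are central, projectivity transfers factorwise). This establishes $\pdim_{A/I} M < \infty$ for every finitely generated $A/I$-module $M$.

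Since $A$ is a finite module over the noetherian ring $R$, the ring $A$ is noetherian, so Lemma~\ref{l:classical-generator-DbmodA-Azumaya-quotient} applies and yields that $A/I$ is a classical generator of $\D^\bd(\mod(A))$. Finally, the equivalence $\coh(\mathcal{A}|_U) \sira \mod(A)$ from \eqref{eq:cohA-modA} induces an equivalence $\D^\bd(\coh(\mathcal{A}|_U)) \sira \D^\bd(\mod(A))$ under which $\mathcal{A}|_U/\mathcal{I}$ corresponds to $A/I$, so $\mathcal{A}|_U/\mathcal{I}$ is the desired classical generator.

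There is no real obstacle; the only minor point requiring care is the componentwise construction of projective resolutions over the product $A/I \cong \prod A_i$, which is routine once one observes that the decomposition is induced by central idempotents.
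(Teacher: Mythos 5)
Your proof is correct and follows essentially the same route as the paper: invoke Proposition~\ref{p:locally-nilpotently-azumaya} to shrink $X$ and kill a nilpotent ideal, then combine Lemma~\ref{l:Azumaya-regular-center-pdim} with Lemma~\ref{l:classical-generator-DbmodA-Azumaya-quotient} and transfer via the equivalence~\eqref{eq:cohA-modA}. The only (harmless) difference is that you spell out the componentwise finite-projective-dimension argument for the product $A_1\times\dots\times A_r$, whereas the paper applies Lemma~\ref{l:Azumaya-regular-center-pdim} directly to the product, tacitly using that a finite product of Azumaya algebras with regular centers is again such.
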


\begin{proof}
  Proposition~\ref{p:locally-nilpotently-azumaya} provides
  a non-empty open subset $U$ of $X$ and a nilpotent two-sided
  ideal $I$ of $\mathcal{A}(U)$ such that
  $\mathcal{A}(U)/I$ is isomorphic to a finite product of Azumaya
  algebras whose centers are regular.
  By
  Lemma~\ref{l:Azumaya-regular-center-pdim}, any finitely
  generated $\mathcal{A}(U)/I$-module has finite projective
  dimension.
  Lemma~\ref{l:classical-generator-DbmodA-Azumaya-quotient}
  therefore shows that $\mathcal{A}(U)/I$ is a classical
  generator
  of
  $\D^\bd(\mod(\mathcal{A}(U)))$.
  Now transfer this statement to
  $\D^\bd(\coh(\mathcal{A}|_U))$ using the equivalence
  $\coh(\mathcal{A}|_U) \cong \mod(\mathcal{A}(U))$ (cf.\
  \eqref{eq:cohA-modA}).
\end{proof}

\subsection{Global existence of a classical generator}
\label{sec:glob-exist-class}

\begin{theorem}
  \label{t:generator-DbcohA}
  Let $X$ be a noetherian J-2 scheme and
  $\mathcal{A}$ a coherent $\mathcal{O}_X$-algebra. Then
  $\D^\bd(\coh(\mathcal{A}))$ has a classical generator.
\end{theorem}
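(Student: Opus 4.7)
The plan is to prove the theorem by noetherian induction on closed subsets of $X$, combining the two main inputs established above: the local existence result Proposition~\ref{p:local-generator}, which supplies a classical generator for $\D^\bd(\coh(\mathcal{B}|_V))$ on some nonempty affine open subset $V$ of any nonempty J-2 scheme equipped with a coherent algebra $\mathcal{B}$, and the gluing result Proposition~\ref{p:glue-generator-open-closed}, which assembles generators on an open subscheme and on a closed complement into a generator on the whole.

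A preliminary observation is that any closed subscheme of $X$ is again noetherian and J-2: it is noetherian as a closed subscheme of a noetherian scheme, and it is J-2 because closed immersions are locally of finite type and the J-2 property is stable under such morphisms (see \cite[\sptag{07R2}]{stacks-project}). Hence Proposition~\ref{p:local-generator} may be invoked not only on $X$ itself, but on any nonempty reduced closed subscheme of $X$ equipped with the pullback of a coherent algebra.

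For the induction I would argue by contradiction. Suppose the theorem fails, and consider the collection of closed subsets $Z \subset X$ for which there exists a coherent $\mathcal{O}_Z$-algebra $\mathcal{B}$ (taking $Z$ with its reduced induced subscheme structure) such that $\D^\bd(\coh(\mathcal{B}))$ has no classical generator. This collection is nonempty by the contradiction hypothesis (take $Z = X$ and $\mathcal{B} = \mathcal{A}$), hence by noetherianity it has a minimal element $Z$; choose a witness $\mathcal{B}$. Applying Proposition~\ref{p:local-generator} to the pair $(Z, \mathcal{B})$ yields a nonempty affine open $V \subset Z$ such that $\D^\bd(\coh(\mathcal{B}|_V))$ has a classical generator. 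Let $W := Z \setminus V$, equipped with its reduced induced subscheme structure, and let $j \colon W \hookrightarrow Z$ be the inclusion. Since $W$ is a strictly smaller closed subset than $Z$, minimality implies that $\D^\bd(\coh(j^*\mathcal{B}))$ has a classical generator. Proposition~\ref{p:glue-generator-open-closed}, applied with $Z$ playing the role of the ambient scheme, $V$ the open subscheme and $W$ the closed complement, then produces a classical generator of $\D^\bd(\coh(\mathcal{B}))$, contradicting the choice of $(Z, \mathcal{B})$.

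The substantive content is concentrated in Proposition~\ref{p:local-generator}, which rests on the Azumaya-algebra analysis of Proposition~\ref{p:locally-nilpotently-azumaya} and is the step that genuinely uses the J-2 hypothesis, and in Theorem~\ref{t:verdier-open-closed-cohA}, which drives the gluing via Proposition~\ref{p:verdier-seq-generators}. With these already proved, I expect no real obstacle in the noetherian induction itself; the only care needed is to package the inductive statement so as to allow the ambient scheme to shrink to a closed subscheme at each step, which is handled by the preliminary observation on the stability of noetherianity and J-2 under closed immersions.
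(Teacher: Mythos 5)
Your argument follows the paper's proof essentially verbatim: noetherian induction (which you phrase as minimal counterexample) combining Proposition~\ref{p:local-generator} for the local step and Proposition~\ref{p:glue-generator-open-closed} for the gluing, with the same preliminary observation that closed subschemes of a noetherian J-2 scheme remain noetherian J-2.

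One small slip worth flagging: you build the collection out of closed subsets $Z$ equipped with their \emph{reduced} induced structure, but then seed it with $Z=X$, $\mathcal{B}=\mathcal{A}$; if $X$ is nonreduced, $\mathcal{A}$ is an $\mathcal{O}_X$-algebra but need not carry a compatible $\mathcal{O}_{X_\red}$-algebra structure, so $(X,\mathcal{A})$ is not literally a member of your collection. The fix is cosmetic and matches what the paper actually does: either allow arbitrary closed subscheme structures on the subsets in the collection (the inductive hypothesis is then the full theorem applied to the smaller scheme), passing to the reduced structure only when forming the complement $W=Z\setminus V$; or first pass from $(X,\mathcal{A})$ to $(X_\red, i^*\mathcal{A})$ using Proposition~\ref{p:generator-support-Z} with $i\colon X_\red\hookrightarrow X$, whose underlying closed set is all of $X$. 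With that adjustment the argument is complete and coincides with the paper's.
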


\begin{proof}
  By noetherian induction we may assume that the
  category $\D^\bd(\coh(\mathcal{A}|_Z))$ has a classical
  generator
  for all proper
  closed subschemes $Z$ of $X$; note that any such $Z$ is again
  noetherian J-2.
  Obviously, we may assume that $X\not=\emptyset$.

  Proposition~\ref{p:local-generator} yields a non-empty open
  subset $U$ of $X$ such that $\D^\bd(\coh(\mathcal{A}|_U))$ has
  a classical generator.
  Equip $Z:=X - U$ with the reduced scheme structure. By
  noetherian induction we know that
  $\D^\bd(\coh(\mathcal{A}|_Z))$ has a classical generator.
  Proposition~\ref{p:glue-generator-open-closed} then shows that
  $\D^\bd(\coh(\mathcal{A}))$ has a classical generator.
\end{proof}

\begin{remark}
  Theorem~\ref{t:generator-DbcohA}
  shows in particular that $\D^\bd(\coh(\mathcal{O}_X))$ has a
  classical generator if $X$ is a noetherian J-2 scheme.
  We refer the reader to
  \cite[Thm.~7.38]{rouquier-dimensions}
  (concerning strong generation) and
  \cite[Prop.~6.8]{lunts-categorical-resolution}
  for related statements for separated schemes of finite type
  over a field. There are more general recent results
  by
  Neeman concerning strong generation (see
  \cite{neeman-strong-generators}).
\end{remark}

\begin{remark}
  \label{r:generator-cohA}
  In the setting of Theorem~\ref{t:generator-DbcohA},
  the category
  $\coh(\mathcal{A})$ even has a
  generator as defined in
  \cite[2.3]{iyengar-takahashi-openness-2018}
  (this
  certainly implies Theorem~\ref{t:generator-DbcohA}).
  The proof of this result is a straightforward
  variation of the results leading to
  Theorem~\ref{t:generator-DbcohA}; instead of working on the
  triangulateted level one works on the abelian level and
  uses the
  Serre localization sequence
  \eqref{eq:serre-sequence}
  instead of the Verdier localization sequence
  \eqref{eq:verdier-sequence}.
\end{remark}

\subsection{Boxproduct of classical generators}
\label{sec:boxpr-class-gener}

If $X$ and $Y$ are schemes over a field $\kk$ we write $X \times
Y$ instead of $X \times_\kk Y$. If $E$ is an
$\mathcal{O}_X$-module and $F$ is an
$\mathcal{O}_Y$-module we abbreviate $E \boxtimes
F:= p^*E \otimes_{\mathcal{O}_{X \otimes Y}}
q^*F$ where $X \xla{p} X \times Y \xra{q} Y$ are the projections.

If $X$ and $Y$ are affine, say $X=\Spec R$ and $Y=\Spec S$, and
$E$ and $F$ are quasi-coherent, then
$E \boxtimes F$ corresponds to the $R \otimes_\kk S$-module $E(X)
\otimes_\kk F(X)$.

If $\mathcal{A}$ is an $\mathcal{O}_X$-algebra and $\mathcal{B}$
is an $\mathcal{O}_Y$-algebra, then $\mathcal{A} \boxtimes
\mathcal{B}$ is a an $\mathcal{O}_{X \times Y}$-algebra.
Given an $\mathcal{A}$-module $E$ and a
$\mathcal{B}$-module $F$, then
$E \boxtimes F$ is in the obvious way an $\mathcal{A} \boxtimes
\mathcal{B}$-module. If $X$ and $Y$ are affine and $\mathcal{A}$,
$\mathcal{B}$, $E$ and $F$ are quasi-coherent over the structure
sheaves $\mathcal{O}_X$ and $\mathcal{O}_Y$-respectively, then
the $\mathcal{A} \boxtimes \mathcal{B}$-module $E \boxtimes
F$
corresponds to
the $\mathcal{A}(X) \otimes_\kk \mathcal{B}(Y)$-module $E(X)
\otimes_\kk F(Y)$.

\begin{theorem}
  \label{t:boxtimes-and-generators}
  Let $X$ and $Y$ be noetherian J-2 schemes over a perfect field
  $\kk$ such that $X \times Y$ is Noetherian (these assumptions
  are for example satisfied if $X$
  and $Y$ are schemes of finite type over the perfect field
  $\kk$).
  Let $\mathcal{A}$ be a coherent $\mathcal{O}_X$-algebra
  and let $\mathcal{B}$ be a coherent $\mathcal{O}_Y$-algebra.
  Consider the coherent $\mathcal{O}_{X \times Y}$-algebra
  $\mathcal{A} \boxtimes \mathcal{B}$.
  Then the following two statements are true:
  \begin{enumerate}
  \item
    \label{enum:exist-boxtimes}
    There exist a
    classical generator $E$ of $\D^\bd(\coh(\mathcal{A}))$ and a
    classical generator $F$ of $\D^\bd(\coh(\mathcal{B}))$ such that
    $E \boxtimes F$ is a classical generator of
    $\D^\bd(\coh(\mathcal{A} \boxtimes \mathcal{B}))$.
  \item
    \label{enum:all-boxtimes}
    For each
    classical generator $E$ of $\D^\bd(\coh(\mathcal{A}))$ and each
    classical generator $F$ of $\D^\bd(\coh(\mathcal{B}))$ the object
    $E \boxtimes F$ is a classical generator of
    $\D^\bd(\coh(\mathcal{A} \boxtimes \mathcal{B}))$.
  \end{enumerate}
\end{theorem}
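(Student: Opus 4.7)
First, I note that (b) follows from (a). Since $\kk$ is a field, for any $F \in \D^\bd(\coh(\mathcal{B}))$ the boxproduct $-\boxtimes F$ is an exact triangulated functor $\D^\bd(\coh(\mathcal{A})) \to \D^\bd(\coh(\mathcal{A}\boxtimes\mathcal{B}))$ and hence sends thick subcategories to thick subcategories; symmetrically for $E \boxtimes -$. If $E_0 \boxtimes F_0$ is a classical generator supplied by (a) and $E, F$ are any classical generators of the respective categories, then $E_0 \in \thick(E)$ and $F_0 \in \thick(F)$, so $E_0 \boxtimes F_0 \in \thick(E \boxtimes F_0) \subseteq \thick(E \boxtimes F)$, whence $E \boxtimes F$ is a classical generator.

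For (a), fix a classical generator $F$ of $\D^\bd(\coh(\mathcal{B}))$ (which exists by Theorem~\ref{t:generator-DbcohA}) and prove by noetherian induction on $X$ the statement: there exists a classical generator $E$ of $\D^\bd(\coh(\mathcal{A}))$ such that $E \boxtimes F$ classically generates $\D^\bd(\coh(\mathcal{A}\boxtimes\mathcal{B}))$. In the inductive step, Proposition~\ref{p:local-generator} yields a non-empty affine open $U \subset X$ and a nilpotent two-sided ideal sheaf $\mathcal{I} \subset \mathcal{A}|_U$ such that $\bar{\mathcal{A}} := \mathcal{A}|_U/\mathcal{I}$ is a classical generator of $\D^\bd(\coh(\mathcal{A}|_U))$, with $\bar{\mathcal{A}}(U)$ a finite product of Azumaya algebras over regular centers (Proposition~\ref{p:locally-nilpotently-azumaya}). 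Granted the sub-claim that $\bar{\mathcal{A}} \boxtimes F$ classically generates $\D^\bd(\coh(\mathcal{A}|_U \boxtimes \mathcal{B}))$, apply Proposition~\ref{p:glue-generator-open-closed} to the decomposition $X = U \sqcup Z$ (with $Z$ the reduced complement) and to the induced decomposition $X \times Y = (U \times Y) \sqcup (Z \times Y)$: invoking the inductive hypothesis on $Z$ to obtain $E_Z$ with $E_Z \boxtimes F$ classically generating $\D^\bd(\coh(\mathcal{A}|_Z \boxtimes \mathcal{B}))$, a lift $\hat{E}_U \in \D^\bd(\coh(\mathcal{A}))$ of $\bar{\mathcal{A}}$ (existing by Theorem~\ref{t:verdier-open-closed-cohA}), and the compatibility $(i \times \id_Y)_*(E_Z \boxtimes F) \cong (i_*E_Z) \boxtimes F$ (by flat base change and the projection formula along the closed immersion $i\colon Z \hra X$), the object $E := i_*E_Z \oplus \hat{E}_U$ is a classical generator of $\D^\bd(\coh(\mathcal{A}))$ and $E \boxtimes F$ is a classical generator of $\D^\bd(\coh(\mathcal{A}\boxtimes\mathcal{B}))$ by two applications of Proposition~\ref{p:glue-generator-open-closed}.

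The sub-claim is proved by a parallel noetherian induction on $Y$. Since $\mathcal{I} \boxtimes \mathcal{B}$ is a nilpotent two-sided ideal sheaf in $\mathcal{A}|_U \boxtimes \mathcal{B}$ (because $\mathcal{I}$ is), the filtration argument of Lemma~\ref{l:classical-generator-DbmodA-Azumaya-quotient} reduces the sub-claim to showing that $\bar{\mathcal{A}} \boxtimes F$ classically generates $\D^\bd(\coh(\bar{\mathcal{A}} \boxtimes \mathcal{B}))$. The $Y$-side inductive step applies Proposition~\ref{p:local-generator} to $\mathcal{B}$, yielding an affine open $V \subset Y$ and a nilpotent ideal $\mathcal{J} \subset \mathcal{B}|_V$ with $\bar{\mathcal{B}} := \mathcal{B}|_V/\mathcal{J}$ a product of Azumaya algebras over regular centers; using that both $F|_V$ and $\bar{\mathcal{B}}$ are classical generators of $\D^\bd(\coh(\mathcal{B}|_V))$ (Theorem~\ref{t:verdier-open-closed-cohA}) together with the exactness of $\bar{\mathcal{A}} \boxtimes -$, the sub-claim on $U \times V$ further reduces to showing that $\bar{\mathcal{A}} \boxtimes \bar{\mathcal{B}}$ classically generates $\D^\bd(\coh(\bar{\mathcal{A}} \boxtimes \bar{\mathcal{B}}))$. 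Now $\bar{\mathcal{A}} \boxtimes \bar{\mathcal{B}}$ is a finite product of algebras $\tilde{A}_i \boxtimes \tilde{B}_j$, each an Azumaya algebra over $\Z(A_i) \otimes_\kk \Z(B_j)$. The main technical obstacle is that this tensor product of centers is regular: both $\Z(A_i)$ and $\Z(B_j)$ are noetherian regular $\kk$-algebras whose tensor product is noetherian (being the coordinate ring of an affine open of $X \times Y$, which is noetherian by hypothesis), and regularity is preserved under such tensor products over a perfect field (see \cite[\sptag{07ND}, \sptag{07QK}]{stacks-project})~--- this is the one place where the perfectness of $\kk$ is essential. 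Lemma~\ref{l:Azumaya-regular-center-pdim} then gives finite projective dimension for every finitely generated $\tilde{A}_i \boxtimes \tilde{B}_j$-module, so Lemma~\ref{l:classical-generator-DbmodA-Azumaya-quotient} (applied with $I = 0$) provides the required classical generation. Summing over $i, j$ and gluing with the inner inductive hypothesis on $Y \setminus V$ via Proposition~\ref{p:glue-generator-open-closed} completes the sub-claim, and hence the proof of~(a).
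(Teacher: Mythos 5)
Your proof is correct and follows essentially the same strategy as the paper: prove (a) implies (b) via the functoriality of $\boxtimes$, and prove (a) by noetherian induction, reducing to the case of products of Azumaya algebras over regular centers (from Proposition~\ref{p:locally-nilpotently-azumaya}), for which the tensor product over the perfect field $\kk$ is again Azumaya with regular noetherian center. The only difference is organizational: you nest the $Y$-induction inside the $X$-induction as a sub-claim, whereas the paper unrolls the same two inductions into separate Claims 1--3 (base case with both factors nice, then one nice, then both arbitrary); you also cite the Stacks Project rather than Tousi--Yassemi for regularity of tensor products of regular rings over a perfect field, and you make explicit the compatibility $(i\times\id)_*(E_Z\boxtimes F)\cong (i_*E_Z)\boxtimes F$ that the paper uses tacitly.
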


\begin{proof}
  The obvious analog of the argument used at the beginning of the
  proof of
  Corollary~\ref{c:otimes-rad-separable-alg} shows that
  \ref{enum:exist-boxtimes} implies
  \ref{enum:all-boxtimes}.

  We prove \ref{enum:exist-boxtimes} in several steps.
  Proposition~\ref{p:locally-nilpotently-azumaya} will play a key
  role in the proof and motivates the following ad hoc
  terminology.

  A pair $(U, \mathcal{R})$
  consisting of an affine J-2 scheme $U$ over $\kk$ and a
  coherent $\mathcal{O}_U$-algebra $\mathcal{R}$ is called \textit{nice}
  if there is a nilpotent two-sided ideal $I \subset
  \mathcal{R}(U)$ such that
  \begin{equation*}
    \mathcal{R}(U)/I \cong A_1 \times \dots \times A_r
  \end{equation*}
  as $\mathcal{O}_U(U)$-algebras for suitable Azumaya algebras
  $A_i$
  whose centers $\Z(A_i)$ are regular rings.
  Note that each $\Z(A_i)$ is a Noetherian ring since it is a
  finite algebra over the noetherian ring $\mathcal{O}_U(U)$.

  Given a nice pair $(U, \mathcal{R})$ we do not distinguish
  between coherent $\mathcal{R}$-modules and finitely generated
  $\mathcal{R}(U)$-modules (cf.\ \eqref{eq:cohA-modA}).

  \textbf{Claim 1.} Statement
  \ref{enum:exist-boxtimes} is true if $(X, \mathcal{A})$ and
  $(Y, \mathcal{B})$ are nice.

  Let $I \subset \mathcal{A}(X)$ and $J \subset \mathcal{B}(Y)$
  be nilpotent two-sided ideals such that
  \begin{align*}
    \mathcal{A}(X)/I & \cong A_1 \times \dots \times A_r,\\
    \mathcal{B}(Y)/J & \cong B_1 \times \dots \times B_s
  \end{align*}
  where all $A_i$ and $B_j$ are Azumaya algebras with regular
  center.
  Then, by
  Lemma~\ref{l:classical-generator-DbmodA-Azumaya-quotient},
  $\mathcal{A}(X)/I$ and
  $\mathcal{B}(Y)/J$ are classical generators of
  $\D^\bd(\mod(\mathcal{A}(X)))$ and
  $\D^\bd(\mod(\mathcal{B}(Y)))$, respectively.

  Note that
  \begin{equation*}
    \frac{\mathcal{A}(X)}{I} \otimes_\kk \frac{\mathcal{B}(Y)}{J}
    \cong \prod_{i,j} A_i \otimes_\kk B_j.
  \end{equation*}
  Each factor $A_i \otimes_\kk B_j$ is an Azumaya algebra over
  its center
  \begin{equation*}
    \Z(A_i \otimes_\kk B_j)=\Z(A_i) \otimes_\kk \Z(B_j),
  \end{equation*}
  by \cite[Prop.~1.5]{auslander-goldman-brauer} (for
  $R_1=\Z(A_i)$, $R_2=\Z(B_j)$ and $R=\kk$)
  and \cite[Thm.~III.5.1]{knus-ojanguren-descente-azumaya}), and
  this center $\Z(A_i \otimes_\kk B_j)$ is a regular ring because
  it is noetherian as a finite algebra over the noetherian ring
  $\mathcal{O}_X(X) \otimes_\kk \mathcal{O}_Y(Y)=\mathcal{O}_{X
    \times Y}(X \times Y)$ and the field
  $\kk$ is perfect, so we obtain regularity from
  \cite[Thm.~6.(e)]{tousi-yassemi-tensor-product-of-rings}.
  Note that $I \otimes_\kk \mathcal{B}(Y)
  +
  \mathcal{A}(X) \otimes_\kk J$ is a nilpotent two-sided
  ideal in
  $\mathcal{A}(X) \otimes_\kk \mathcal{B}(Y)$ with quotient
  \begin{equation*}
    \frac{
      \mathcal{A}(X) \otimes_\kk \mathcal{B}(Y)
    }
    {
      I \otimes_\kk \mathcal{B}(Y)
      +
      \mathcal{A}(X) \otimes_\kk J
    }
    =
    \frac{\mathcal{A}(X)}{I} \otimes_\kk \frac{\mathcal{B}(Y)}{J}.
  \end{equation*}
  These facts imply, by
  Lemma~\ref{l:classical-generator-DbmodA-Azumaya-quotient},
  that
  $\frac{\mathcal{A}(X)}{I} \otimes_\kk \frac{\mathcal{B}(Y)}{J}$
  is a classical generator of
  $\D^\bd(\mod(\mathcal{A}(X) \otimes_\kk \mathcal{B}(Y)))$.
  Using the above description of the $\boxtimes$-product
  in terms of modules
  if $X$ and $Y$ are affine, this proves claim 1.

  \textbf{Claim 2.} Statement
  \ref{enum:exist-boxtimes} is true if at least one of
  $(X, \mathcal{A})$ and
  $(Y, \mathcal{B})$ is nice.

  Assume without loss of generality that $(Y, \mathcal{B})$ is nice.
  By noetherian induction on $X$ we can assume that
  for all proper closed subschemes $Z$ of $X$ with inclusion
  morphism $i \colon Z \subset X$
  there is a classical generator $E_Z$ of
  $\D^\bd(\coh(i^*\mathcal{A}))$ and a classical generator $F$ of
  $\D^\bd(\coh(\mathcal{B}))$ such that $E_Z \boxtimes F$ is a
  classical generator of
  $\D^\bd(\coh(i^*\mathcal{A} \boxtimes
  \mathcal{B}))$. Here we implicitly use that any such closed
  subscheme $Z$ is
  noetherian J-2 and has the property that $Z \times Y$ is noetherian.

  By assumption, $X$ is a noetherian J-2 scheme.
  We may assume that $X$ is non-empty.
  Then Proposition~\ref{p:locally-nilpotently-azumaya}
  yields a non-empty affine open
  subset $U$ of $X$ such that $(U, \mathcal{A}|_U)$ is nice; this
  uses that any affine open subscheme of a J-2 scheme is
  J-2. Note also that $U \times Y$ is noetherian.

  By claim 1 there are a classical
  generator $E_U$ of $\D^\bd(\coh(\mathcal{A}|_U))$ and a
  classical generator $F$ of $\D^\bd(\coh(\mathcal{B}))$ such that
  $E_U \boxtimes F$ is a classical generator of
  $\D^\bd(\coh(\mathcal{A}|_U \boxtimes \mathcal{B}))$.

  Equip $Z:=X - U$ with the reduced scheme structure and let $i
  \colon Z \subset X$ be the inclusion morphism. By
  noetherian induction (and the observation that
  \ref{enum:exist-boxtimes}
  implies \ref{enum:all-boxtimes}) there is a classical generator
  $E_Z$ of
  $\D^\bd(\coh(i^*\mathcal{A}))$ such that $E_Z \boxtimes F$ is a
  classical generator of
  $\D^\bd(\coh(i^*\mathcal{A} \boxtimes \mathcal{B}))$.

  By Theorem~\ref{t:verdier-open-closed-cohA} there is an object
  $\hat{E}_U$ of $\D^\bd(\coh(\mathcal{A}))$ such that
  $\hat{E}_U|_U\cong E_U$.
  Then $i_*E_Z \oplus \hat{E}_U$ is a classical generator of
  $\D^\bd(\coh(\mathcal{A}))$, by
  Proposition~\ref{p:glue-generator-open-closed}.
  The same proposition shows that
  \begin{equation*}
    (i_*E_Z \oplus \hat{E}_U) \boxtimes F
    \cong
    (i_*E_Z \boxtimes F) \oplus (\hat{E}_U \boxtimes F)
    \cong
    (i \times \id)_*(E_Z \boxtimes F) \oplus (\hat{E}_U \boxtimes F)
  \end{equation*}
  is a classical generator of
  $\D^\bd(\coh(\mathcal{A} \boxtimes \mathcal{B}))$
  because
  $(\hat{E}_U \boxtimes F)|_U \cong E_U \boxtimes F$.
  This proves the claim.

  \textbf{Claim 3.} Statement
  \ref{enum:exist-boxtimes} is true for arbitrary
  $(X, \mathcal{A})$ and
  $(Y, \mathcal{B})$.

  To prove this we proceed as in the proof of claim 2 but do of
  course not assume
  that $(Y, \mathcal{B})$ is nice; the only other
  difference is that we invoke claim 2 at the place where we invoke
  claim 1 in the proof of claim 2.
\end{proof}

\section{Smoothness for some algebras which are finite over their
  center}
\label{sec:smoothn-some-algebr}

\begin{theorem}
  \label{t:DbmodA-smooth}
  Let $A$ be an algebra (not assumed to be commutative) over a
  perfect field
  $\kk$. Assume
  that $A$ is a finite module over its center $\Z(A)$
  and that the center $\Z(A)$ is a finitely generated
  (commutative) $\kk$-algebra.
  Then
  $\D^\bd(\mod(A))$ is smooth over $\kk$
  (in the sense defined in Remark~\ref{r:DbmodA-smooth}).
\end{theorem}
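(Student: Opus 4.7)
The plan is to apply Theorem~\ref{t:sufficient-for-smoothness} to $\mathcal{T} := \D^\bd_{\mod(A)}(A) = \D^\bd(A)_\pseudocoh$, which by Remark~\ref{r:DbmodA-smooth} and equivalence \eqref{eq:DbmodA-DbApscoh} models $\D^\bd(\mod(A))$. Set $R := \Z(A)$ and $X := \Spec R$. By hypothesis $R$ is a finitely generated $\kk$-algebra over the perfect field $\kk$, so $X$ is noetherian J-2 and $X \times X$ is noetherian. The algebra $A$ (resp.\ $A^\opp$, whose center also equals $R$) corresponds via \eqref{eq:cohA-modA} to a coherent $\mathcal{O}_X$-algebra $\mathcal{A}$ (resp.\ $\mathcal{A}^\opp$), and $A^e := A \otimes_\kk A^\opp$ is finite over the noetherian ring $R \otimes_\kk R$ and corresponds to $\mathcal{A} \boxtimes \mathcal{A}^\opp$ on $X \times X$.

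For the dualizing object I take $\mathscr{D} := \Rd\Hom_R(A, \omega_R)$, where $\omega_R \in \D^\bd(\mod R)$ is a Grothendieck dualizing complex for $R$ (which exists since $R$ is a finitely generated $\kk$-algebra). The outer $A$-bimodule structure on $A$ (the one not consumed by forming $\Hom_R$) endows $\mathscr{D}$ with the structure of a bounded complex of finitely generated $A^e$-modules, so $\mathscr{D} \in \D^\bd(A^e)_\pseudocoh$. By the standard noncommutative Grothendieck duality for a finite algebra over a commutative noetherian ring with a dualizing complex, $\Rd\Hom_A(-, \mathscr{D})$ restricts to an adjoint equivalence
\begin{equation*}
  \Rd\Hom_A(-, \mathscr{D}) \colon \D^\bd_{\mod(A)}(A) \rightleftarrows \D^\bd_{\mod(A^\opp)}(A^\opp)^\opp \colon \Rd\Hom_{A^\opp}(-, \mathscr{D}).
\end{equation*}
Remark~\ref{r:dualizing-object-from-duality} then shows $\mathscr{D}$ is a dualizing object for $\mathcal{T}$ and $\mathcal{T}^\vee = \D^\bd(A^\opp)_\pseudocoh$.

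It remains to verify \ref{enum:T-class-gen} and \ref{enum:dualizing-object-from-T-Tvee} from Theorem~\ref{t:sufficient-for-smoothness}. For \ref{enum:T-class-gen}, Theorem~\ref{t:generator-DbcohA} produces a classical generator $E$ of $\mathcal{T} \cong \D^\bd(\coh(\mathcal{A}))$, which automatically lies in $\D^\bd(A)_\pseudocoh$ and whose image under the duality equivalence is a classical generator of $\mathcal{T}^\vee = \D^\bd(A^\opp)_\pseudocoh$. For \ref{enum:dualizing-object-from-T-Tvee}, Theorem~\ref{t:boxtimes-and-generators} applied to the noetherian J-2 schemes $X$ and $X$ over the perfect field $\kk$ supplies classical generators $E'$ of $\D^\bd(\coh(\mathcal{A}))$ and $F'$ of $\D^\bd(\coh(\mathcal{A}^\opp))$ such that $E' \boxtimes F'$ (i.e.\ $E' \otimes_\kk F'$) is a classical generator of $\D^\bd(\coh(\mathcal{A} \boxtimes \mathcal{A}^\opp)) \cong \D^\bd(\mod(A^e))$; since $\mathscr{D}$ belongs to this last category, it lies in $\thick(E' \otimes_\kk F')$, which by Remark~\ref{r:on-t:sufficient-for-smoothness} is the equivalent form~\ref{enum:dualizing-object-from-class-gen-T-Tvee} of the required condition. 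Theorem~\ref{t:sufficient-for-smoothness} then yields the smoothness of $\D^\bd(\mod(A))$ over $\kk$.

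The main obstacle is the noncommutative Grothendieck duality underlying the second paragraph, i.e.\ showing that $\mathscr{D} = \Rd\Hom_R(A, \omega_R)$ is a dualizing bimodule for the noncommutative algebra $A$ exchanging bounded finitely generated left and right modules. This reduces to the commutative biduality for $\omega_R$ by restriction of scalars along the finite map $R \to A$: for any $M \in \D^\bd(\mod A)$ the adjunction gives $\Rd\Hom_A(M, \mathscr{D}) \cong \Rd\Hom_R(M, \omega_R)$ as $R$-modules (with the remaining $A$-action transported across), and the biduality over $R$ automatically lifts to an $A$-bimodule-equivariant biduality because every $R$-linear map involved is in fact $A$-bilinear in our setting. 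Once this point is granted, the remainder is a formal assembly of Theorems~\ref{t:sufficient-for-smoothness}, \ref{t:generator-DbcohA}, and \ref{t:boxtimes-and-generators}.
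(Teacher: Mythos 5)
Your proposal is correct and follows essentially the same route as the paper: both identify $\mathscr{D}=\Rd\Hom_R(A,\omega_R)$ as the dualizing bimodule, reduce its biduality property to commutative Grothendieck duality for $R$ via restriction of scalars along the finite map $R\hookrightarrow A$, and then verify conditions \ref{enum:T-class-gen} and \ref{enum:dualizing-object-from-T-Tvee} using Theorem~\ref{t:generator-DbcohA} and Theorem~\ref{t:boxtimes-and-generators}, respectively. The paper spells out the ``noncommutative Grothendieck duality'' step in more detail (choosing $\omega$ h-injective, computing $\Hom_A(M,\mathscr{D})=\Hom_R(M,\omega)$, and checking compatibility of units and counits before invoking Remark~\ref{r:dualizing-object-from-duality}), but this is precisely the reduction you sketch in your final paragraph.
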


This result is a generalization of the version of
Theorem~\ref{t:Dbmod-findimalg-separable-smooth} where $\kk$ is a
perfect field. The strategy of proof is very similar.

\begin{proof}
  Remember that $\D^\bd(\mod(A))$ is equivalent to the category
  \begin{equation}
    \label{eq:T2}
    \mathcal{T}:=\D^\bd_{\mod(A)}(A)=\D^\bd(A)_\pseudocoh
  \end{equation}
  because $A$ is noetherian (see equivalence
  \eqref{eq:DbmodA-DbApscoh} and equality
  \eqref{eq:Db_modA-DbApscoh}
  in Remark~\ref{r:algebra-as-lincat}).
  By our definition
  of $\kk$-smoothness of $\D^\bd(\mod(A))$
  in
  Remark~\ref{r:DbmodA-smooth}
  we need to prove that $\mathcal{T}$ is $\kk$-smooth.
  We will use the sufficient condition for smoothness of
  Theorem~\ref{t:sufficient-for-smoothness}.

  We first need to find a dualizing bimodule for $\mathcal{T}$
  (in the sense of Definition~\ref{d:dualizing}).
  This will use the notion of a dualizing
  complex from commutative algebra, see
  \cite[\sptag{0A7A}]{stacks-project}.

  We abbreviate $R:=\Z(A)$. This is a finitely generated
  $\kk$-algebra by assumption.
  Hence $R$ has a dualizing complex $\omega$, by
  \cite[\sptag{0A7K}]{stacks-project}. In particular,
  \begin{equation}
    \label{eq:Hom-to-omega-adjunction}
    \Rd\Hom_R(-, \omega)
    \colon
    \D^\bd_{\mod(R)}(R)
    \rightleftarrows
    \D^\bd_{\mod(R)}(R)^\opp
    \colon
    \Rd\Hom_R(-, \omega)
  \end{equation}
  is an adjoint equivalence, by
  \cite[\sptag{0A7C}]{stacks-project}.
  Note that $\omega \in \D^\bd_{\mod(R)}(R)$.
  We can and will
  assume in the following that $\omega$ is a bounded below
  complex of injective $R$-modules and hence an
  h-injective complex of $R$-modules; this means that we can
  replace $\Rd\Hom$ by $\Hom$ in the above adjunction and assume
  that unit and counit of the adjunction are the obvious maps
  into the biduals with respect to $\omega$.

  If $M \in \C(A)$ is a complex of (right) $A$-modules, then $\Hom_R(A,
  \omega) \in \C(A^\opp)$ is a complex of left
  $A$-modules. Similarly, if $N \in \C(A^\opp)$ is a complex of
  left $A$-modules, then $\Hom_R(A,\omega) \in \C(A)$ is a
  complex of (right) $A$-modules. Moreover, the
  unit $M \ra \Hom_R(\Hom_R(M,\omega), \omega)$ and
  counit $N \ra \Hom_R(\Hom_R(N,\omega), \omega)$ are morphisms
  in $\C(A)$ and $\C(A^\opp)$, respectively. Hence the adjunction
  in the lower row of the following diagram gives rise to
  the adjunction in its upper row (note that the functors need
  not be decorated with a derived symbol).
  \begin{equation*}
    \xymatrix{
      {\D(A)}
      \ar@<0.5ex>[rr]^-{\Hom_R(-, \omega)}
      \ar[d]
      &&
      {\D(A^\opp)^\opp}
      \ar@<0.5ex>[ll]^-{\Hom_R(-, \omega)}
      \ar[d]
      \\
      {\D(R)}
      \ar@<0.5ex>[rr]^-{\Hom_R(-, \omega)}
      &&
      {\D(R)^\opp}
      \ar@<0.5ex>[ll]^-{\Hom_R(-, \omega)}
    }
  \end{equation*}
  The vertical arrows are the restriction functors along $R=\Z(A)
  \hra A$. The diagram is commutative if we ignore the two
  horizontal arrows
  pointing to the left or the two horizontal arrows pointing to
  the right.

  An $A$-module is finite over $A$ if and only if it is finite
  over $R$, because $A$ is a finite $R$-module. Hence
  an object $M \in \D(A)$ is in $\D^\bd_{\mod(A)}(A)$
  if and only if its restriction $M|_R \in \D(R)$
  is in $\D^\bd_{\mod(R)}(R)$.
  Since we know that the lower adjunction restricts to the
  adjoint equivalence
  \eqref{eq:Hom-to-omega-adjunction}, we deduce that the upper
  adjunction restricts to an adjoint equivalence
  \begin{equation}
    \label{eq:Hom-to-omega-adjunction-for-A}
    \Hom_R(-, \omega)
    \colon
    \mathcal{T}=\D^\bd_{\mod(A)}(A)
    \rightleftarrows
    \D^\bd_{\mod(A^\opp)}(A^\opp)^\opp
    \colon
    \Hom_R(-, \omega).
  \end{equation}
  We will see that this adjoint equivalence originates from a
  complex of bimodules. The natural candidate is
  \begin{equation*}
    \mathscr{D}:=\Hom_R(A, \omega)=\Hom_R(\leftidx{_A}{A}{_A}, \omega)
  \end{equation*}
  which is a complex of $A \otimes_R A^\opp$-modules and may also
  be viewed as a complex of $A \otimes_\kk A^\opp$-modules.
  We have
  \begin{multline*}
    \Hom_A(M, \mathscr{D})
    =\Hom_A(M,\Hom_R(\leftidx{_A}{A}{_A}, \omega))\\
    =\Hom_R(M \otimes_A \leftidx{_A}{A}{_A}, \omega)
    =\Hom_R(M,\omega)
  \end{multline*}
  in $\C(A^\opp)$ natural in $M \in \C(A)$
  and
  \begin{multline*}
    \Hom_{A^\opp}(N, \mathscr{D})
    =\Hom_{A^\opp}(N,\Hom_R(\leftidx{_A}{A}{_A}, \omega))\\
    =\Hom_R(\leftidx{_A}{A}{_A} \otimes_A N, \omega)
    =\Hom_R(N, \omega)
  \end{multline*}
  in $\C(A)$ natural in $N \in \C(A^\opp)$.
  Hence the two functors
  in the adjoint equivalence
  \eqref{eq:Hom-to-omega-adjunction-for-A}
  may be written as
  \begin{equation*}
    \Hom_R(-, \omega) = \Hom_A(-,\mathscr{D})
    \qquad \text{and} \qquad
     \Hom_R(-, \omega)= \Hom_{A^\opp}(-,\mathscr{D}).
  \end{equation*}
  Moreover, the unit and counit of the adjunction
  \eqref{eq:Hom-to-omega-adjunction-for-A}
  correspond to the unit and counit
  of the adjunction obtained from $\mathscr{D}$, cf.\
  \eqref{eq:Hom-to-X-adjunction}.
  Since \eqref{eq:Hom-to-omega-adjunction-for-A}
  is an adjoint equivalence,
  Remark~\ref{r:dualizing-object-from-duality} shows that
  $\mathscr{D}$ is a dualizing object for $\mathcal{T}$ and that
  \begin{equation}
    \label{eq:Tvee2}
    \mathcal{T}^\vee=\D^\bd_{\mod(A^\opp)}(A^\opp)=\D^\bd(A^\opp)_\pseudocoh
  \end{equation}
  where the last equality comes from \eqref{eq:Db_modA-DbApscoh}.

  It remains to check conditions
  \ref{enum:T-class-gen}
  and
  \ref{enum:dualizing-object-from-T-Tvee}
  from
  Theorem~\ref{t:sufficient-for-smoothness}
  in our situation.

  Let $X=\Spec R$ and
  let $\mathcal{A}$ be the coherent $\mathcal{O}_X$-algebra
  corresponding to the finite $R$-algebra $A$.
  Then we have equivalences of categories
  \begin{equation*}
    \mathcal{T}=\D^\bd_{\mod(A)}(A) \cong \D^\bd(\mod(A)) \cong
    \D^\bd(\coh(\mathcal{A}))
  \end{equation*}
  (cf.\ the equivalence \eqref{eq:cohA-modA}).
  Since $X=\Spec R=\Spec \Z(A)$ is of finite type over $\kk$ it is
  a noetherian J-2 scheme, and hence
  Theorem~\ref{t:generator-DbcohA}
  shows that $\D^\bd(\coh(\mathcal{A}))$ and hence $\mathcal{T}$
  have a classical generator.
  This together with the equalities
  \eqref{eq:T2} and
  \eqref{eq:Tvee2} shows that
  condition~\ref{enum:T-class-gen} is satisfied.

  Let $E$ be a classical generator of $\mathcal{T}
  \cong \D^\bd(\coh(\mathcal{A}))$. Its
  dual $F:=\Hom_A(E, \mathscr{D})$ is then a classical generator of
  $\mathcal{T}^{\vee} \cong \D^\bd(\coh(\mathcal{A}^\opp))$.
  In order to check
  condition~\ref{enum:dualizing-object-from-T-Tvee}, or rather
  the equivalent condition
  \ref{enum:dualizing-object-from-class-gen-T-Tvee}
  in Remark~\ref{r:on-t:sufficient-for-smoothness}, we need to
  prove that the thick subcategory
  of $\D(A \otimes_\kk A^\opp)$
  generated by $E \otimes_\kk F$ contains $\mathscr{D}$.
  Obviously we have
  \begin{equation*}
    \mathscr{D} =\Hom_R(A, \omega)
    \in
    \D^\bd_{\mod(A \otimes_\kk A^\opp)}(A \otimes_\kk A^\opp)
    \cong
    \D^\bd(\coh(\mathcal{A} \boxtimes \mathcal{A}^\opp))
  \end{equation*}
  where $\mathcal{A} \boxtimes \mathcal{A}^\opp$ is the
  coherent $\mathcal{O}_{X \times_\kk X}$-algebra corresponding
  to the finite $(R \otimes_\kk R)$-algebra $A \otimes_\kk A^\opp$.
  If we view $E$ as an object of $\D^\bd(\coh(\mathcal{A}))$ and
  $F$ as an object of $\D^\bd(\coh(\mathcal{A}^\opp))$ it is
  therefore enough to show that $E \boxtimes F$ is a classical
  generator of
  $\D^\bd(\coh(\mathcal{A} \boxtimes \mathcal{A}^\opp))$.
  But this is true by Theorem~\ref{t:boxtimes-and-generators}
  since $\kk$ is assumed to be perfect.
\end{proof}

\begin{remark}
  \label{r:finite-global-dim-smooth-2}
  This is the analog to Remark~\ref{r:finite-global-dim-smooth}.
  Let $A$ be an algebra over a perfect field $\kk$ as in
  Theorem~\ref{t:DbmodA-smooth}, i.\,e.\
  $A$ is a finite module over its center $\Z(A)$
  and the center $\Z(A)$ is a finitely generated
  $\kk$-algebra.
  Assume in addition that $A$ is
  right-regular in sense of \cite[7.7.1]{McCRob}: Any finitely
  generated module has finite projective dimension.
  Then $A$ is a classical generator of
  $\D^\bd(\mod(A))$. Hence
  $A$ is $\kk$-smooth, by
  Theorem~\ref{t:DbmodA-smooth} and
  Remark~\ref{r:smoothness-dg-endos-classical-generator}.
\end{remark}

\def\cprime{$'$} \def\cprime{$'$} \def\cprime{$'$} \def\cprime{$'$}
  \def\Dbar{\leavevmode\lower.6ex\hbox to 0pt{\hskip-.23ex \accent"16\hss}D}
  \def\cprime{$'$} \def\cprime{$'$}


\end{document}